\DeclareMathAlphabet\EuFrak{U}{euf}{m}{n}	
\SetMathAlphabet\EuFrak{bold}{U}{euf}{b}{n}	
\newcommand{\bo}{{\partial_0 b}}
\newcommand{\bl}{{\partial_1 b}}
\newcommand{\co}{{\partial_0 c}}
\newcommand{\cl}{{\partial_1 c}}
\newcommand{\cz}{{\partial_2 c}}
\newcommand{\tto}{ {\tiny{\to}} }
\newcommand{\ovl}{\overline}
\newcommand{\unl}{\underline}
\newcommand{\wa}{\widehat}
\newcommand{\wt}{\widetilde}
\newcommand{\bC} {{\mathbb C}}
\newcommand{\bR} {{\mathbb R}}
\newcommand{\bT} {{\mathbb T}}
\newcommand{\bU} {{\mathbb U}}
\newcommand{\bSU} {{\mathbb{SU}}}
\newcommand{\bZ} {{\mathbb Z}}
\newcommand{\bN} {{\mathbb N}}
\newcommand{\veps}{\varepsilon}
\newcommand{\tbe}{{\emph{\textbf{e}}}}
\newcommand{\tbf}{{\emph{\textbf{f }}}}
\newcommand{\tbfu}{{\emph{\textbf{f\,}}}}
\newcommand{\tbu}{{\emph{\textbf{u\,}}}}
\newcommand{\tbv}{{\emph{\textbf{v\,}}}}
\newcommand{\tbi}{{\emph{\textbf{i\,}}}}
\newcommand{\bs}{\boldsymbol}
\newcommand{\mA}{\mathcal A}
\newcommand{\mB}{\mathcal B}
\newcommand{\mC}{\mathcal C}
\newcommand{\mE}{\mathcal E}
\newcommand{\mF}{\mathcal F}
\newcommand{\mG}{\mathcal G}
\newcommand{\mH}{\mathcal H}
\newcommand{\mK}{\mathcal K}
\newcommand{\mM}{\mathcal M}
\newcommand{\mO}{\mathcal O}
\newcommand{\mS}{\mathcal S}
\newcommand{\mX}{\mathcal X}
\newcommand{\mU}{\mathcal U}
\newcommand{\sA}{\mathscr{A}}
\newcommand{\sH}{\mathscr{H}}
\newcommand{\sK}{\mathscr{K}}
\newcommand{\ad}{{\mathrm{ad}}}
\newcommand{\Hol}{{\mathrm{Hol}}}
\newcommand{\hhol}{\jmath \, }
\newcommand{\Nat}{{N}}
\newcommand{\Calg}{{\bf C^*alg}}
\newcommand{\Hilb}{{\bf Hilb}}
\newcommand{\obj}{{\bf obj \, }}
\newcommand{\Cgrb}{{\bf C^*grb}}
\newcommand{\tdyn}{{\bf tC^*dyn}}
\newcommand{\pbun}{{\bf pbun }}
\newcommand{\bun}{{\bf bun }}
\newcommand{\Aut}{{\bf aut }}
\newcommand{\Inn}{{\bf inn }}
\newtheorem{thm}{Theorem}[section]
\newtheorem{cor}[thm]{Corollary}
\newtheorem{lem}[thm]{Lemma}
\newtheorem{prop}[thm]{Proposition}
\newtheorem{defn}[thm]{Definition}
\theoremstyle{definition}
\newtheorem{ex}{Example}[section]
\newtheorem{rem}[thm]{Remark}
\theoremstyle{remark}
\numberwithin{equation}{section}
\begin{document}

\author{{\sf Ezio Vasselli}
\\{\small{Dipartimento di Matematica,}}
\\{\small{Universit\`a di Roma ``Tor Vergata'',}}
\\{\small{Via della Ricerca Scientifica 1, I-00133 Roma, Italy.}}
\\{\sf ezio.vasselli@gmail.com} \\
\\{\emph{Dedicated to Damiano Vasselli and his curiosity}}
}

\title{Gerbes over posets and twisted C*- dynamical systems}
\maketitle

\begin{abstract}
A base $\Delta$ generating the topology of a space $M$
becomes a partially ordered set (poset), when ordered under inclusion of open subsets.
Given a precosheaf over $\Delta$ of fixed-point spaces (typically $C^*$-algebras) under the action of a group $G$,
in general one cannot find a precosheaf of $G$-spaces having it as fixed-point precosheaf.
Rather one gets a gerbe over $\Delta$, that is, 
a "twisted precosheaf" whose twisting is encoded by a cocycle with coefficients in a suitable 2--group.
We give a notion of holonomy for a gerbe, in terms of a non-abelian cocycle over the fundamental group $\pi_1(M)$.
At the $C^*$-algebraic level, holonomy leads to a general notion of twisted $C^*$-dynamical system,
based on a generic 2-group instead of the usual adjoint action on the underlying $C^*$-algebra.
As an application of these notions, we study presheaves of group duals (DR-presheaves)
and prove that the dual object of a DR-presheaf is a group gerbe over $\Delta$.
It is also shown that any section of a DR-presheaf defines a twisted action of $\pi_1(M)$ on a Cuntz algebra.

\

\noindent Keywords and phrases: Twisted $C^*$-dynamical system, non-abelian cohomology, poset, gerbe, duality theory.

\end{abstract}

\newpage
\tableofcontents
\markboth{Contents}{Contents}

\newpage

\section{Introduction}
\label{sec.intro}

The geometry of partially ordered sets (\emph{posets}) is a research line originated from ideas of J.E. Roberts and
has its roots in the early days of quantum field theory,
when Bohr and Rosenfeld pointed out that it is not possible to evaluate a quantum field over a point 
as a consequence of the uncertainty relations \cite[\S I.5.2]{Haa}.
Rather, what can be done is to define an "averaged field" localized in a spacetime region,
and this point is reflected in the form that quantum fields take in mathematically rigorous approaches
as those of Wightman \cite[Chap.II.1]{Haa} and Haag-Kastler \cite[Chap.III]{Haa}.

It is in the latter one, known as \emph{algebraic quantum field theory}, that the inclusion relation between spacetime regions
takes a fundamental role. In this setting the object describing a quantum system is a precosheaf, say $\mA$,
with inclusion *-morphisms
\begin{equation}
\label{eq.intro.1}
\jmath_{\omega o} : A_o \to A_\omega  \ \ , \ \ o \subseteq \omega  \subset M \ ,
\end{equation}
where $M$ is a spacetime and each $A_o$ is a *-algebra of operators on a fixed Hilbert space.
%
%
The idea is that $A_o$ is generated by observable field operators localized in the region $o \subset M$,
that for convenience is chosen in a suitable base $\Delta$.
Thus the spacetime regions $o \in \Delta$ take the role of the points, 
and $\Delta$  -- that is a poset under the inclusion relation -- replaces $M$
for what concerns the geometric aspects.

This led to consider notions for posets with a geometric flavour, such as the fundamental group \cite{Ruz05}, cohomology \cite{RR06,RRV07},
bundles \cite{RR06,RRV07,RRV08}, and their connections \cite{RR06}. 
These combinatorial notions yield truly geometric invariants when applied to a "good"
{\footnote{That is, a base of arcwise and simply connected subsets.}}
base of a manifold, as the fundamental group \cite{Ruz05}, 
${\mathrm{1}}^{\mathrm{st}}$ locally constant \cite{RRV07} and de Rham \cite{RRV08} cohomology,
the category of flat bundles \cite{RRV07,RRV08}.

These ideas have been applied to quantum field theory, for aspects concerning gauge theories \cite{CRV1,CRV2}.
Anyway a further topic is the analysis of \emph{sectors}, that are the physically relevant Hilbert space representations of $\mA$.
In the case of the Minkowski spacetime this question led
to the characterization of the set of sectors (\emph{superselection structure}) as a symmetric tensor category with conjugates 
(\emph{DR-category}, after S. Doplicher and J.E. Roberts). 
%
%
It is a pivotal result that any DR-category is isomorphic to the category of representations (\emph{dual}) of a compact group,
that is interpreted as the global gauge group in the case of a superselection structure \cite{DR89,DR90}.

The generalization of this schema to generic curved spacetimes is still an open problem enriched with interesting geometric aspects.
The most recent proposal of superselection structure is the one given by R. Brunetti and G. Ruzzi \cite{BR08}, 
and is defined as the set $Z^1(\mA)$ of 1-cocycles with values in the precosheaf of unitary operators in $\mA$. 
Instead of Hilbert space representations, $Z^1(\mA)$ defines representations of $\mA$ on Hilbert bundles over $\Delta$
that correspond to flat Hilbert bundles over $M$ \cite{RV11,RV14a,RV14b}. 
This approach has been proposed to describe situations in which the geometry of $M$ interacts with the physics described by $\mA$: 
the paradigmatic example is the Aharonov-Bohm effect, where the monodromy of a classical electromagnetic potential, closed as a de Rham 1-form, 
appears as a phase factor affecting the wave functions of charged quantum particles. 
The fact that Aharonov-Bohm type effects can be really described by representations of $\mA$ on Hilbert bundles over $\Delta$ has been proved in \cite[Sect.4.2]{VasQFT}
{\footnote{A complete discussion of these representations from the point of view of the theory of sectors is the object
           of a work in progress by C. Dappiaggi, G. Ruzzi and the author.}}.

In the same paper it is proved that $Z^1(\mA)$ can be represented as the category of sections of a presheaf $\mS(\mA)$ of DR-categories (\emph{DR-presheaf}) over $\Delta$.
The advantage of this representation is that each DR-category $S_o$ of the presheaf can be interpreted as a "local" superselection structure for $A_o$, $o \in \Delta$.
This leads to the mathematical question of finding the right dual object playing the role of the dual compact group of a single DR-category.
We already proved in a previous work that, if a "coherent" embedding of a DR-presheaf into the category of Hilbert spaces is given, 
then the category of sections is isomorphic to the one of Hilbert $\mG$-bundles,
where $\mG$ is a group bundle over $\Delta$ \cite{Vas12}.  
Yet $\mG$ depends on the embedding, and embeddings may be many or do not exist.
Thus the question of the search of the canonical dual object arises.

In the present paper we prove that the dual object of the category of sections of a DR-presheaf is a group gerbe over $\Delta$:
roughly speaking, a "twisted group bundle", whose obstruction to be an actual bundle is measured by a non-abelian cocycle.
Any bundle over $\Delta$ is a particular kind of gerbe, and non-isomorphic bundles may become isomorphic when regarded as gerbes,
because in this setting we have a more general notion of morphism.
This fact is the dual counterpart of the multiplicity of embeddings for DR-presheaves.

Our approach is intentionally oversized with respect to the physical scenario,
as we expect that the dual gerbe of $Z^1(\mA)$ is a trivial group bundle \S \ref{sec.D4}.
Yet our level of generality brings into the light the rich mathematical structures of DR-presheaves,
where 2--groups, gerbes, non-abelian cocycles and twisted $C^*$-dynamical systems appear. 
These structures are the object of the present investigation.

In the sequel we illustrate the organization of the present paper and the main results.

\

In Section \ref{sec.A} we recall some notions.
We start with \emph{2--groups}, that we adopt in the form of crossed modules: these are pairs, 
denoted by ${}^2G = (G \tto N)$, of group morphisms
\[
i : G \to N \ \ \ , \ \ \ \beta : N \to \Aut G
\]
fulfilling certain equivariance conditions. 
Our notation for $N$ is not so standard: 
in the present paper, it indicates that $N$ is often defined as the normalizer of $i(G)$ in some bigger group.

\noindent 
Afterwards we recall some definitions in geometry of posets: 
paths (and loops in particular), the fundamental group, cohomology, bundles.
In particular, bundles over posets are defined as precosheaves such that the inclusion morphisms are isomorphisms;
in the present paper we shall consider bundles of $C^*$-algebras, Hilbert spaces and groups.

\

In Section \ref{sec.B} we define a non-abelian cohomology for the poset $\Delta$ with coefficients in a generic 2--group 
${}^2G = (G \tto N)$.
%
%
This generalizes the one in \cite{RR06} in which $N$ is the group of inner automorphisms of $G$.
Given a non-abelian ${}^2G$-cocycle $q$ over $\Delta$ and a category $C$ of $G$-spaces 
($G$-$C^*$-dynamical systems, $G$-Hilbert spaces, ...), we introduce the notion of \emph{gerbe} over $\Delta$: 
a family of objects of $C$ endowed with inclusion morphisms of the type (\ref{eq.intro.1}),
whose obstruction to fulfill the usual precosheaf relations 
%
%
is encoded by $q$.
%
%
When the inclusion morphisms are invertible we use the term \emph{bundle gerbe}; 
when $q$ is trivial, a bundle gerbe is nothing but a bundle over $\Delta$.
We derive the twisted holonomy of a gerbe, that defines a non-abelian cocycle over the fundamental group $\pi_1(\Delta)$
describing the parallel transport over loops in $\Delta$ (Prop.\ref{prop.B3.1} and Eq.\ref{eq.B3.3}).
%
%
Finally, the relation between gerbes over posets and gerbes over spaces is briefly discussed, Eq.\ref{eq.comp.03}.

\

Section \ref{sec.C} concerns twisted $C^*$-dynamical systems and their relation with $C^*$-gerbes. 
Given a group $\Pi$, a $C^*$-algebra $B$ and a 2--group ${}^2G = (G \tto N)$ with $N \subseteq \Aut B$,
in our approach a twisted $C^*$-dynamical system is defined as a non-abelian ${}^2G$-cocycle over $\Pi$, see \S \ref{sec.C3}.
Thus twisted $C^*$-dynamical systems in the standard sense \cite{ZM68,BS70,PR89} arise
when $G = U(B)$, the unitary group of $B$, and $N = \Aut B$.
We study the basic properties of our twisted $C^*$-dynamical systems, in particular their equivariant representations
that in our setting involve representations of $G$ besides twisted representations of $\Pi$.
A crossed product is constructed, Theorem \ref{thm.cp}.

\noindent
On the other side, $C^*$-gerbes pop up in a natural way.
If a $C^*$-precosheaf $\mA$ is given and we know that every $C^*$-algebra of the precosheaf $A_o$, $o \in \Delta$,
is the fixed-point algebra under a $G$-action on a $C^*$-algebra $F_o$,
then we may ask whether we can arrange the family $F = \{ F_o \}_{o \in \Delta}$ to get a $C^*$-precosheaf.
This problem may have many inequivalent solutions or have no solution at all \cite{Vas12},
but both these situations can be encoded by a $C^*$-gerbe 
$\check{\mF}$,
isomorphic to any (when existing) $C^*$-precosheaf with fixed-point sub-precosheaf $\mA$.
This is the case of the Haag-Kastler precosheaf defined by the free Dirac field,
where solutions correspond to background, closed electromagnetic potentials, \S \ref{sec.C1}.

\noindent Now, in \cite{RV11} it is proved that $C^*$-bundles over $\Delta$ form a category equivalent to the one of
$C^*$-dynamical systems over $\pi_1(\Delta)$, and $C^*$-dynamical systems are a well-developed machinery \cite{Ped,Wil}. 
The geometric picture of this construction is that
the $*$-automorphisms of the $\pi_1(\Delta)$-action are the parallel transport around loops in the "base space" $\Delta$.
We prove analogous results for $C^*$-gerbes,
assigning a twisted $C^*$-dynamical system to any bundle $C^*$-gerbe (Theorem \ref{thm.C4.1}).
%
%

\noindent We note that, using the equivalence with the category of flat bundles,
we can recast these ideas outside the scenario of posets. 
For example, by Prop.\ref{prop.C1.1} and Theorem \ref{thm.C4.1}
any flat $C^*$-bundle $A \to M$ with fibre a fixed-point algebra $A_\bullet = F_\bullet^G$ 
and structure group $N/G$, where $N \subseteq \Aut F_\bullet$ is in the normalizer of $G$,
defines a twisted action of $\pi_1(M)$ on $F_\bullet$.

\

In Section \ref{sec.D} we prove our duality theorems for DR-presheaves.
To this end we make some preliminary remarks:
first, given a 2-group ${}^2G$ and a ${}^2G$-cocycle $q$ over $\Delta$, 
there is a standard group gerbe $\check{\mG}_q$ associated to $q$, Example \ref{ex.Gadj};
secondly, the category $\Hilb_q(\Delta)$ of Hilbert gerbes with twisting $q$ 
can be regarded in a natural way as the dual of $\check{\mG}_q$, Example \ref{ex.Hilb}.
This allows us to prove the following result, Theorem \ref{thm.D2.1}:
given a DR-presheaf $\mS$ over $\Delta$, 
there are a 2-group ${}^2G$ and a ${}^2G$-cocycle $q$ over $\Delta$
such that the category of sections of $\mS$ embeds in $\Hilb_q(\Delta)$.
All the involved objects, ${}^2G$, $q$ and $\check{\mG}_q$, are unique up to isomorphism.
When the restriction morphisms of $\mS$ are isomorphisms, $q$ is a complete invariant, Theorem \ref{cor.ThmEmb.1}.

\noindent We also prove, Prop.\ref{prop.DR.tw}, that any section $\varrho$ of $\mS$ defines a twisted $\pi_1(\Delta)$-action on the Cuntz algebra $\mO_d$,
where $d \in \bN$ is the dimension of $\varrho$ \cite[\S 2]{DR89}.

\noindent Finally we apply our construction to the DR-presheaf $\mS(\mA)$ defined by a Haag-Kastler precosheaf (\ref{eq.intro.1}),
assigning to it a "gauge" 2-group gerbe ${}^2\check{\mG}(\mA)$, Theorem \ref{thm.HK}.
We give a qualitative argument to explain why we expect that ${}^2\check{\mG}(\mA)$ should collapse to a trivial group bundle.
The fact that things might really go this way is object of a work in progress, in which $\mS(\mA)$ plays a crucial role \cite[\S 5]{VasQFT}.


\section{Background notions}
\label{sec.A}

\paragraph{Some conventions.}
Several kinds of groups and algebras shall be considered. 
Unless otherwise stated we write $1$ for the identity of a group,
as for example the unitary group of a Hilbert space.
When $R$ is an algebra ($C^*$-algebra, Von Neumann algebra), we write in the same way $1 \in R$ for the unit.
No confusion should arise, since the context will be clarifying.
Moreover, we use the symbol $id$ for the identity map of a set.

With the term \emph{topological group} we always mean a Hausdorff topological group which,
unless otherwise stated, is assumed to be locally compact.
Morphisms between topological groups (in particular, unitary representations) are always assumed to be continuous.

\paragraph{Operator algebras.}
Some notions on Hilbert space representations of topological groups, 
$C^*$-algebras, Von Neumann algebras and $C^*$-dynamical systems are assumed \cite{Ped,Wil}.
Here we just give the following terminologies.
A \emph{$C^*$-dynamical system} is given by a pointwise continuous action $\alpha : G \to \Aut A$,
where $A$ is a $C^*$-algebra and $G$ a topological group
{\footnote{
Throughout this paper, $\Aut A$ shall always be endowed with the pointwise convergence topology.
}}.
When we want to emphasize the role of $G$, we say that $A$ is a \emph{$G$-dynamical system}.
Analogously, given a unitary representation $\pi : G \to U(H)$, we say that $H$ is a \emph{$G$-Hilbert space}.
We assume that *-morphisms between unital $C^*$ and Von Neumann algebras are unital.

The notion of \emph{multiplier $C^*$-algebra} $M(B)$ of a $C^*$-algebra $B$ is used \cite[\S 3.12]{Ped}.
We write $U(B)$ for the unitary group of $M(B)$; since $M(B) = B$ for $B$ unital, no confusion should arise.

\paragraph{Categories.}
The elementary notions that we use (\emph{categories}, \emph{functors}, \emph{natural transformations})
can be found in \cite[\S I.1-4]{ML}.
Some terminologies follow.
%
%
Given the topological group $G$, a \emph{$G$-category} $C$ is a category having objects topological spaces carrying a $G$-action
(\emph{$G$-spaces}) and arrows $G$-equivariant continuous maps (\emph{$G$-morphisms}). Given $\rho,\sigma \in \obj C$,
we denote the set of their $G$-morphisms by $(\rho,\sigma)$. The examples that we have in mind are given by
the category $\Calg_G$ of $G$-dynamical systems with arrows $G$-equivariant *-morphisms, and
the category $\wa{G}$ of $G$-Hilbert spaces with arrows $G$-equivariant linear operators.
If $\pi : G \to U(H)$, $\eta : G \to U(K)$ are unitary representations (that is, $H,K \in \obj \wa{G}$),
then we write
\begin{equation}
\label{def.iG}
(\pi,\eta) \, := \, \{ t : H \to K \, : \, \eta(g) \circ t = t \circ \pi(g) \, , \, \forall g \in G  \} \ .
\end{equation}
The definition of \emph{DR-category} (symmetric tensor $C^*$-category with conjugates, having simple identity object)
can be found in \cite[\S 1]{DR89}. Here we just need a quick description: a DR-category $T$ is a category where
the spaces of arrows $(\rho,\sigma)$ are complex Banach spaces, endowed with:
(1) an antilinear involution $*$ fulfilling $\| t^* \circ t \| = \| t \|^2,$ for all arrows $t$;
(2) a tensor product whose commutativity is described by a unitary natural transformation, the symmetry;
(3) an identity $\iota \in \obj T$ for the tensor product with $(\iota,\iota) \simeq \bC$;
(4) conjugate objects $\bar{\rho}$, $\forall \rho \in \obj T$.

The prototype of DR-category is $\wa{G}$, where $G$ is a compact group.
Here the tensor product is the usual one 
$H,K \mapsto H \otimes K$
defined for $G$-Hilbert spaces, the symmetry is defined by the flip operators
$\theta_{HK}(v\otimes w) := w \otimes v$, $\forall v \in H$, $w \in K$,
and the conjugate is given by the conjugate Hilbert space $\bar{H}$ carrying the conjugate representation.
There are other examples of DR-categories: we just mention semigroups of
symmetric *-endomorphisms of $C^*$-algebras with trivial centre \cite{DR89A}.


\subsection{2--Groups}
\label{sec.A0}
A \emph{2--group} (that we present in the guise of a \emph{crossed module}) is given by a pair of group morphisms 
$i : G \to N$, $\beta : N \to \Aut G$,
such that
\[
i \circ \beta_u  = \ad u \circ i
\ \ \ , \ \ \
\beta_{i(g)} \circ i = i \circ \ad g
\ \ \ \ , \
\forall u \in N \, , \, g \in G \ .
\]
To be concise, sometimes in the sequel we will use the notations
\[
\unl g := i(g) \in N
\ \ \ \ , \ \ \ \ 
\wa u(g) := \beta_u(g) \in G
\ .
\]
A 2--group is denoted by 
${}^2G \equiv ( G \tto N )$,
often leaving the maps $i$, $\beta$ implicit.
We say that ${}^2G$ is \emph{topological} whenever $G$ and $N$ are topological groups (and $i,\beta$ continuous);
in this case ${}^2G$ defines the $C^*$-dynamical system $\beta_* : N \to \Aut C^*G$ 
(about the \emph{group $C^*$-algebra} $C^*G$ see \cite[\S 7.1]{Ped}).

\begin{ex}
Given a group $G$ we have the 2--group $G \tto \Aut G$, where the map from $G$ to $\Aut G$
is given by the adjoint action. In the sequel we shall write 
${}^2\Aut G := (G \tto \Aut G)$.
\end{ex}

\begin{defn}
A \textbf{2--group morphism}
$\eta : (G' \tto N') \to {}^2G$
is defined by a pair $\eta = (\eta_0,\eta_1)$ of morphisms
$\eta_0 : G' \to G$, $\eta_1 : N' \to N$,
equivariant in the sense that
\[
\eta_1 \circ i \, = \, i' \circ \eta_0
\ \ , \ \ 
\eta_0(\wa{u'}(g')) \, = \, \wa{\eta_1(u')} (\eta_0(g'))
\ \ , \ \ 
\forall g' \in G' \, , \, u' \in N' \ .
\]
\end{defn}
Automorphisms of ${}^2G$ form the group $\Aut {}^2G$.
Sometimes in the sequel we will drop the zero and one symbols, writing $\eta(u)$, $\eta(g)$.
Note that any $u \in N$ defines the adjoint 2--group automorphism 
$\ad u(g) := \wa{u}(g)$, $\ad u(v) = uvu^{-1}$, $g \in G$, $v \in N$.
%
Thus we get actions
\begin{equation}
\label{eq.2G.1}
\ad : N \to \Aut {}^2G
\ \ \ , \ \ \
\ad \circ i : G \to \Aut {}^2G \ .
\end{equation}

When $G$ is a normal subgroup of $N$, the inclusion and the adjoint action yield the 2--group $G \tto N$.
In particular, when $G=N$ and $i : G \to N$ is the identity we write
\begin{equation}
\label{eq.1G}
{}^1G \, := \, ( G \tto G ) \ .
\end{equation}
More in general, given any 2--group $G \tto N$ we have that $i(G)$ is a normal subgroup of $N$.
To be concise, we write
\begin{equation}
\label{eq.mod}
N/G \, := \, N/i(G) 
\ \ \ \ {\mathrm{and}} \ \ \ \
u_G := u \, {\mathrm{mod}} \, i(G) \in N/G  \, , \, \forall u \in N \ .
\end{equation}
We write $\tbe := 1_G$ for the identity of $N/G$.

\begin{rem}
Let $A$ be a $C^*$-algebra. Then any unitary $u \in U(A)$ defines the inner
automorphism $\ad u \in \Aut A$. Since inner automorphisms form a normal subgroup of $\Aut A$,
we get the 2--group ${}^2\Aut A := (U(A) \tto \Aut A)$.
Let $\alpha : G \to \Aut A$ be a $C^*$-dynamical system and
$N_\alpha A$ denote the normalizer of $\alpha(G)$ in $\Aut A$.
Then we have the \emph{normalizer 2--group}
\[
{}^2G_\alpha := (\alpha(G) \tto N_\alpha A) \ .
\]
Let ${}^2G := (G \tto N)$ be a 2--group. A morphism $\alpha : {}^2G \to {}^2\Aut A$
is a \emph{Green $C^*$-dynamical system} in the terminology of \cite[\S 7.4.1]{Wil}.
\end{rem}

\subsection{Geometry of posets}
\label{sec.A1}

A \emph{poset} is given by a set $\Delta$ endowed with an order relation $\leq$.
%
%
We will often denote a poset simply by $\Delta$, leaving the symbol $\leq$ implicit.
Examples of posets arise from topology: 
any family of open, non-empty proper subsets of a topological space, ordered under inclusion, defines a poset.
%
%
%

\paragraph{Simplicial sets.}
An important invariant of $\Delta$ is the \emph{simplicial set} $\Sigma_*(\Delta)$ (\cite[\S 2]{RR06} and \cite[\S 3.1]{RRV07}).
We comment the first three degrees and simply define the others.
\begin{itemize}
\item	$\Sigma_0(\Delta) := \Delta$. With a $na\ddot{\imath}f$ geometric idea, we think $\Sigma_0(\Delta)$ as a "space".
\item	By definition, elements of $\Sigma_1(\Delta)$ are triples
	\[
	b \, = \, (\partial_0b,\partial_1b,|b| \in \Delta)
	\ \ : \ \ 
	\partial_1b , \partial_0b \leq |b| \ .
	\]
	The idea is that $b$ is a "line" starting at the "point" $\partial_1b$ and ending at $\partial_0b$.
	The previous notation automatically yields maps $\partial_* : \Sigma_1(\Delta) \to \Sigma_0(\Delta)$, $*=0,1$.
\item	The set $\Sigma_2(\Delta)$ of "triangles" has elements quadruples
	\[
	c \, = \, ( \co , \cl , \cz \in \Sigma_1(\Delta) \, , \, |c| \in \Sigma_0(\Delta) ) \ ,
	\]
	fulfilling the properties
	$\partial_{hk}c = \partial_{k \, h+1}c$, $|\partial_hc| \leq |c|$, $\forall h \geq k \in \{ 0,1,2 \}$,
	where $\partial_{hk} := \partial_h \circ \partial_k$ are the compositions of the boundary maps.
	Here the idea is that the sides of the triangle $c$ are given by $\co$, $\cl$ and $\cz \in \Sigma_1(\Delta)$,
	and the above relations identify the common vertices $\partial_{hk}c$.
\end{itemize}
Inductively, at higher degrees a $n$-simplex is a $n+2$-tuple $d = ( \bo d , \ldots , \partial_nd , |d| )$,
where $\partial_kd \in \Sigma_{n-1}(\Delta)$, $k=0,\ldots,n$, $|d| \in \Delta$, fulfil
\begin{equation}
\label{eq.A3}
|\partial_id| \leq |d| \ , \ \partial_{hk}d = \partial_{k,h+1}d
\  \, \ \ 
i,h = 0,\ldots,n \, , \, k \leq h \ .
\end{equation}
There is a further simplicial set $N_*(\Delta)$, called the \emph{nerve} of $\Delta$ (\cite[\S 3.1]{RRV07}, \cite[\S A.2]{RRV08}),
that turns out to be a simplicial subset of $\Sigma_*(\Delta)$. 
We give the definition of the first three degrees: $N_0(\Delta) := \Delta$, whilst
\[
N_1(\Delta) \ := \ \{  b = (\partial_1b,\partial_0b) \, : \, \partial_1b \leq \partial_0b   \} \ .
\]
Note that $N_1(\Delta)$ encodes the order relation of $\Delta$, and any $b \in N_1(\Delta)$ can be regarded as the element
$b' := ( \partial_0b' := \partial_0b , \partial_1b' := \partial_1b , |b'| := \partial_0b  )$
of $\Sigma_1(\Delta)$. Finally, $N_2(\Delta) \subseteq \Sigma_2(\Delta)$ is given by triples
$c = ( \cl \leq \cz \leq \co \in \Delta)$.

\paragraph{Paths and homotopy.}
A \emph{path} is a finite sequence 
$p =  b_n * \ldots * b_1$
of elements of $\Sigma_1(\Delta)$ such that
$\partial_0b_k = \partial_1b_{k+1}$, $\forall k = 1 , \ldots , n-1$.
If $a = \partial_1b_1 \in \Delta$ and $o = \partial_0b_n \in \Delta$, then we write
\[
p : a \to o \ .
\]
For simplicity, we shall always assume that $\Delta$ is \emph{connected}, that is, 
that for any pair $o,a \in \Delta$ there is a path $p : a \to o$.
In particular, when $a=o$ we say that $p$ is a \emph{loop}.
Paths can be composed in a natural way: if $p' : o \to e$, $p' = b'_m * \ldots * b'_1$, starts where $p$ finishes, then we define
\[
p'*p \, := \, b'_m * \ldots * b'_1 * b_n * \ldots * b_1  \ .
\]
The \emph{opposite} of $p$ is given by the path $\ovl{p} : o \to a$,
$\ovl{p} \, := \, \ovl{b}_1 * \ldots * \ovl{b}_n$,
where
\[
\ovl{b}_k \, := \, ( \partial_0\ovl{b}_k := \partial_1b_k \, , \, \partial_1\ovl{b}_k := \partial_0b_k \, , \, |\ovl{b}_k| := |b_k| )
\ \ , \ \ \ \
\forall k = 1 , \ldots , n-1 \, .
\]
Let $a \in \Delta$. A \emph{path frame} is a family 
\begin{equation}
\label{eq.pf}
p_a \, = \, \{ p_{oa} : a \to o \}_o \ ,
\end{equation}
such that $p_{aa} = \iota_a := (a,a,a) \in \Sigma_1(\Delta)$, the \emph{constant path} based on $a$.
To be concise, in the sequel we shall write $p_{ao} := \ovl{p}_{oa} : o \to a$.
Path frames allow one to relate loop compositions with the order relation of $\Delta$, by defining
\begin{equation}
\label{eq.loops}
\ell_{\omega o} \, := \, p_{a\omega } * (\omega o) * p_{oa} : a \to a 
\ \ \ \Rightarrow \ \ \
\ell_{\xi \omega } * \ell_{\omega o} = \ell_{\xi o} \ , \ \forall o \leq \omega  \leq \xi  \ .
\end{equation}
There is an equivalence relation $\sim$ on the set of paths, called \emph{homotopy} \cite[\S 2.2]{Ruz05},
%
%
%
modeled on the case in which $\Delta$ is a base of simply connected open sets.
%
%
The set of loops is a semigroup when endowed with the composition operation, 
and the quotient under the homotopy relation is a group
\[
\pi_1^a(\Delta) \ ,
\]
called the \emph{homotopy group of $\Delta$ based on $a$}. 
It turns out that $[p]^{-1} = [\ovl{p}]$, where $[p] \in \pi_1^a(\Delta)$ denotes the homotopy class of the loop $p : a \to a$.
The identity $1 \in \pi_1^a(\Delta)$ is given by the class of the constant path $\iota_a$.
The isomorphism class of $\pi_1^a(\Delta)$ is independent of $a \in \Delta$, 
thus when $a$ will not play a role we shall write $\pi_1(\Delta)$ for $\pi_1^a(\Delta)$.
%

\paragraph{Cohomology.}
Let $N$ be a group with unit $1$. A \emph{1-cocycle} is a map
\begin{equation}
\label{eq.A4}
u : \Sigma_1(\Delta) \to N
\ \ \ {\mathrm{:}} \ \ \
u_\cl \, = \, u_\co \, u_\cz  \ \ , \ \ \ \ \forall c \in \Sigma_2(\Delta) \ .
\end{equation}
When $b=(o,\omega ) \in N_1(\Delta)$ we write $u_b \equiv u_{\omega o}$.
We denote the set of cocycles by $Z^1(\Delta,N)$. A 1-cocycle $u'$ is said to be \emph{cohomologous} to $u$
if there is $v : \Sigma_0(\Delta) \to N$ such that
\begin{equation}
\label{eq.A5}
v_\bo \, u'_b \, v_\bl^{-1} \ = \  u_b 
\ \ , \ \ \ \ 
\forall b \in \Sigma_1(\Delta) \ .
\end{equation}
In this case we write
$v \in (u,u')$.
We denote the set of cohomology classes by $H^1(\Delta,N)$.  
It is an abelian group when $N$ is abelian, but in general it is a set with distinguished element
the cohomology class of the \emph{trivial cocycle} ${\bf 1}_b :=1$, $b \in \Sigma_1(\Delta)$. 
There is an isomorphism
\begin{equation}
\label{eq.A6}
H^1(\Delta,N) \, \simeq \, H^1(\pi_1(\Delta),N) \ ,
\end{equation}
where, for any group $\Pi$, by $H^1(\Pi,N)$ we mean the quotient of the set of morphisms $\phi : \Pi \to N$ 
by the equivalence relation $\phi \simeq \ad v \circ \phi$, $v \in N$ \cite[Prop.3.8]{RR06}.

Given an abelian group $A$ with unit $1$, we can define cohomology groups $H^n(\Delta,A)$ in higher degrees $n \in \bN$,
in the following way. We consider maps $u : \Sigma_n(\Delta) \to A$ fulfilling the condition
$u(\partial_0k) u(\partial_1k)^{-1}  u(\partial_2k) \cdots = 1$, $\forall k \in \Sigma_{n+1}(\Delta)$.
These form the group $Z^n(\Delta,A)$ of $n$-cocycles.
Then we define $H^n(\Delta,A)$ as the quotient under the subgroup of $n$-coboundaries
$dw(h) := w(\partial_0h) w(\partial_1h)^{-1}  w(\partial_2h) \cdots $, $\forall h \in \Sigma_n(\Delta)$,
where $w$ is any map $w : \Sigma_{n-1}(\Delta) \to A$.

\paragraph{Posets and topology.}
Let $M$ be a Hausdorff space and $\Delta$ a base, ordered under inclusion, of arcwise and simply connected proper subsets generating the topology of $M$.
In this case we say that $\Delta$ \emph{is a good base for} $M$.
By \cite[Theorem 2.18]{Ruz05}, given $a \in \Delta$ and $x \in a$ there is an isomorphism of fundamental groups
\begin{equation}
\label{eq.A1}
\pi_1^a(\Delta) \to \pi_1(M) \ \ , \ \  [p] \mapsto [p_{top}] \ ,
\end{equation}
see also \cite[\S II.13]{BT}.
The proof is intuitive: the closed curve $p_{top} : [0,1] \to M$, $p_{top}(0) = x$, is constructed in such a way
that its support is contained in $\cup_n |b_n| \subseteq M$, where $p = b_n * \ldots * b_1$,
and the homotopy class $[p_{top}] \in \pi_1(M)$ can be shown to be independent of the choice of $p$ in the homotopy class 
$[p] \in \pi_1^a(\Delta)$. 
As a consequence, one can prove that if $M$ is a manifold then $H^1(\Delta,\bR) \simeq H_{dR}^1(M)$ \cite[Cor.3.9]{RRV08},
where $H_{dR}$ denotes de Rham cohomology.

\subsection{Precosheaves and bundles}
\label{sec.A3}
Let ${}^2G = (G \tto N)$ be a 2-group, $u \in Z^1(\Delta,N)$ and $C$ a $G$-category.
A \emph{$u$-precosheaf}, written $\mB = (B,\jmath)_\Delta$, is given by a family 
$\jmath_{\omega o} :  B_o \to B_\omega $, $o\leq \omega  \in \Delta$,
of morphisms between objects of $C$, such that
\begin{equation}
\label{eq.A7}
\jmath_{\xi o} \, = \, \jmath_{\xi \omega } \circ \jmath_{\omega o} 
\ \ \ , \ \ \
\jmath_{\omega o} \circ \alpha_o(g) \ = \ \alpha_\omega (\wa{u}_{\omega o}(g)) \circ \jmath_{\omega o}
\ ,
\end{equation}
for all $\forall o \leq \omega  \leq \xi  \in \Delta$ and $g \in G$.
Here we explicitly wrote the actions
\[
\alpha_o : G \to \Aut B_o \ \ , \ \ \forall o \in \Delta \ .
\]
We call $B$ the \emph{set of fibres} and $\jmath$ the \emph{precosheaf structure} that is formed by the \emph{inclusion morphisms} $\jmath_{\omega o}$.
The first equations in (\ref{eq.A7}) are called the \emph{precosheaf relations}. 
When $u$ is trivial $N$ plays no role and each $\jmath_{\omega o}$ is a $G$-morphism, thus we use the term \emph{$G$-precosheaf}.
When $G$ is trivial, we simply use the term \emph{precosheaf}.
A \emph{morphism} $\eta : \mB \to \mB' = (B',{\jmath \, }')_\Delta$ of $u$-precosheaves is given by a family of $G$-morphisms 
$\eta_o : B_o \to B'_o$, $o \in \Delta$,
such that 
\begin{equation}
\label{eq.A7a}
\eta_\omega  \circ \jmath_{\omega o}  \, = \, {\jmath \, }'_{\omega o} \circ \eta_o 
\ \ , \ \ 
\forall o \leq \omega 
\ .
\end{equation}
We say that $\eta$ is an isomorphism whenever each $\eta_o$ is an isomorphism.

A remark on the notation: we may rewrite
$j_{\omega o}  \, \equiv \, j_b$,
where $b := (o\leq \omega ) \in N_1(\Delta)$. In this way, the first of (\ref{eq.A7}) becomes
$\jmath_\cl  =  \jmath_\co \circ \jmath_\cz$, $c \in N_2(\Delta)$.
\begin{ex}
Let $G$ be a group and $M$ a space with trivial $G$-action having a good base $\Delta$. 
If $E \to M$ is a $G$-vector bundle then for any $o \in \Delta$ we define
$SE_o$
as the vector space of continuous sections $s : M \to E$ vanishing in $M \setminus o$.
For any $o \subseteq \omega $ we have the inclusion morphism $\jmath_{\omega o} : SE_o \to SE_\omega $
and (\ref{eq.A7}) are obviously fulfilled. Thus $\mS\mE = (SE,\jmath)_\Delta$ is a $G$-precosheaf.
If $f : E \to E'$ is a $G$-morphism, then a morphism $f_* : \mS\mE \to \mS\mE'$ is induced,
$f_{*,o}(s) := f \circ s$, $\forall s \in SE_o$, $o \in \Delta$.
\end{ex}

\paragraph{Bundles.}
We call \emph{$u$-bundles} those $u$-precosheaves $\mB = (B,\jmath)_\Delta$ such that any $j_{\omega o}$ is an isomorphism.
In this case we can define 
$j_b := j_{|b|\bo}^{-1} \circ \jmath_{|b|\bl}$, $\jmath_b : B_\bl \to B_\bo$,
for arbitrary $b \in \Sigma_1(\Delta)$. Thus, given $a \in \Delta$ and a loop $p : a \to a$, $p = b_n * \ldots * b_1$,
by composition we get an automorphism
$\jmath_p : B_a \to B_a$.
These automorphisms are independent of the homotopy class of $p$, and defining $B_* := B_a$ we obtain an action
\begin{equation}
\label{eq.A8}
\jmath_* : \pi_1^a(\Delta) \to \Aut B_* \ \ , \ \ [p] \mapsto \jmath_p \ .
\end{equation}
This action characterizes $\mB$ up to isomorphism: the set $\bun(\Delta,B_*)$ of isomorphism classes of bundles
with fibres isomorphic to $B_*$ is isomorphic to $H^1( \pi_1(\Delta) , \Aut B_* )$ \cite[Th.3.2]{Vas12}.

\begin{ex}[Group bundles defined by cocycles]
\label{ex.Gb}
Let ${}^2G_\bullet = (G_\bullet \tto N)$ be a 2-group and $u \in Z^1(\Delta,N)$.
We define $G_o := G_\bullet$, $\alpha_o(g) := \ad g$ and $\jmath_{\omega o} := \wa{u}_{\omega o}$ for all $g \in G_\bullet$, $o \leq \omega  \in \Delta$.
It is easily seen that $\mG_u := (G,\jmath)_\Delta$ is a group $u$-bundle.
\end{ex}

\begin{rem}[Choice of the standard fibre]
\label{rem.standardfibre}
Any bundle $\mB = (B,\jmath)_\Delta$ is isomorphic to a bundle $\mB' = (B',\jmath^{ \,'})_\Delta$
where any $B'_o$ equals a fixed object $F$, called \emph{the standard fibre of $\mB$} 
(see \cite[\S 2.2]{VasQFT}, $F = B_a$ for a fixed $a \in \Delta$).
We call $\mB'$ a \emph{choice of the standard fibre} of $\mB$, and use the term \emph{$F$-bundle} for $\mB$.
\end{rem}

\paragraph{$C^*$-precosheaves.}
When $C$ is the category of $G$-dynamical systems we make use of the term \emph{$C^*$-precosheaf}. 
Some classes of examples follow.
\begin{itemize}
\item Let $A$ be a $C^*$-algebra. Then the set $\Delta$ of (closed, two-sided, proper, non-trivial) ideals of $A$
      forms a poset under the inclusion relation and generates the topology of the Jacobson spectrum of $A$ \cite[\S 4.1]{Ped}. 
      For any $o \in \Delta$ we (tautologically) denote the corresponding ideal by $A^\triangleleft_o$ and the inclusion *-morphisms by 
      $\jmath_{\omega o} : A_o \to A_\omega $, $o \subseteq \omega $.
      Thus the $C^*$-precosheaf $\mA^\triangleleft = (A^\triangleleft,\jmath)_\Delta$ comes canonically associated with $A$.
      In particular, if $X$ is a locally compact Hausdorff space with base $\Omega$ and $A$ is $C_0(X)$-algebra \cite[\S C.1]{Wil},
      then defining $A^X_U := C_0(U)A$, $U \in \Omega$, we get the $C^*$-precosheaf $\mA^X = (A^X,\jmath)_\Omega$.
      Similar arguments apply to the lattice of hereditary $C^*$-subalgebras of a $C^*$-algebra \cite{AB15}.
\item Let $E$ be a directed graph and $C^*E$ denote the graph $C^*$-algebra of $E$. It is well-known that $C^*E$ is a $\bU(1)$-dynamical system.
      We denote the poset of hereditary and saturated subsets of the set of vertices of $E$ by $\Delta$.
      Any $o \in \Delta$ generates a $\bU(1)$-stable ideal $C^*E^\triangleleft_o \subset C^*E$ \cite[Chap.2]{graphs}. 
      In this way, defining the inclusion *-morphisms 
      $\jmath_{\omega o} : C^*E^\triangleleft_o \to C^*E^\triangleleft_\omega$, $o \subseteq \omega$,
      we get the $\bU(1)$-precosheaf $\mC^*\mE^\triangleleft = ( C^*E^\triangleleft,\jmath)_\Delta$.
\item Let $M$ be a space with good base $\Delta$. Then the category of $C^*$-bundles over $\Delta$ is equivalent
      to those of $\pi_1(M)$-dynamical systems \cite{RV11} and of
      flat $C^*$-bundles over $M$ \cite[\S 2.2]{RV14a}.
\item Let $M$ be a globally hyperbolic spacetime and $\Delta$ denote the base of diamonds. 
      Then any field net $\mF$ with gauge group $G$  is a $G$-precosheaf over $\Delta$.
      The observable subnet $\mA$ defined by $G$-invariant operators in $\mF$ is a $C^*$-precosheaf \cite[\S 3]{GLRV01}.
\item Let ${}^2G = (G \tto N)$ be a 2-group, $u \in Z^1(\Delta,N)$ and 
      $\alpha : {}^2G \to {}^2\Aut A_*$ 
      a Green $C^*$-dynamical system \cite[\S 7.4.1]{Wil}. 
      Then defining $A_o := A_*$, $\alpha_o(g) := \alpha(\unl{g})$ and $\jmath_{\omega o} := \alpha(u_{\omega o})$ for all $g \in G$, $o \leq \omega  \in \Delta$,
      yields the $u$-bundle $\mA := (A,\jmath)_\Delta$.
\end{itemize}


\section{Gerbes over posets}
\label{sec.B}

In the present section we study gerbes.
As a first step we introduce a non-abelian cohomology for a poset $\Delta$ with coefficients in a 2-group.
Then we define gerbes over posets as a generalization of $G$-precosheaves.
Finally we study the holonomy of a gerbe which, instead of a $\pi_1(\Delta)$-action as in (\ref{eq.A8}),
yields a non-abelian $\pi_1(\Delta)$-cocycle.

\subsection{Non-abelian cocycles}
\label{sec.B1}

Let $\Delta$ be a poset and ${}^2G := (G \tto N)$ a 2--group.
A \emph{non-abelian 2-cocycle} on $\Delta$ with coefficients in ${}^2G$ is given by maps
$u : \Sigma_1(\Delta) \to N$, $g : \Sigma_2(\Delta) \to G$,
such that
\begin{equation}
\label{eq.B1.01}
\unl{g_c} \, = \, u_\co \, u_\cz \, u_\cl^{-1}
\ \ \ \ , \ \ \ \
g_{\partial_0d} \, g_{\partial_2d} \, = \, \wa{u}_{\partial_{01}d}(g_{\partial_3d}) \, g_{\partial_1d} \ ,
\end{equation}
for all $c \in \Sigma_2(\Delta)$ and $d \in \Sigma_3(\Delta)$.
We denote a non-abelian 2-cocycle by $q \equiv (u,g)$.
The \emph{trivial cocycle}, written ${\bf 1}$, is given by the constant maps with values the identities of $G$ and $N$.
We denote the set of non-abelian 2-cocycles by 
\[
\check{Z}^2(\Delta,{}^2G) \ .
\]
The terminology of $\check{Z}^2$ as a \emph{second} cohomology is not universally used, 
as some authors prefer to give emphasis to the 1-cochain $u$ and talk about non-abelian 1-cocycles \cite{BS}.
The notation we use here is coherent with \cite{RR06}, where the subject of non-abelian cohomology for posets
has been discussed in a systematic way.

Some remarks follow.
\begin{itemize}
\item When the map $g \mapsto \unl{g} \in N$, $g \in G$, is injective we may define $g_c$ by the first of (\ref{eq.B1.01}),
      and the second of (\ref{eq.B1.01}) is automatically verified.
\item Let $\eta : {}^2G' \to {}^2G$ be a 2--group morphism and $q'=(u',g') \in \check{Z}^2(\Delta,{}^2G')$.
      Then $\eta_*q' := (\eta_1 \circ u' , \eta_0 \circ g')$ is a non-abelian 2-cocycle 
      with coefficients in ${}^2G$.
\item Let $\Inn G$ denote the group of inner automorphisms of $G$ and ${}^2\Inn G =$ $(G \tto \Inn G)$ the 2--group defined by adjoint action.
      By \cite[Eq.19 and Eq.31]{RR06}, we have that our notion of 2--cocycle with coefficients in ${}^2\Inn G$ agrees with the one of \cite{RR06}.
\end{itemize}

\begin{defn}[Twisted connections]
\label{rem.tc}
Writing the first of (\ref{eq.B1.01}) for $c = ( \iota_a , \iota_a , \iota_a ; a )$, where $\iota_a := (a,a,a)$,
we conclude that $u_{\iota_a} = \unl{g_c} \in i(G)$ for all $a \in \Delta$.
We say that $q = (u,g)$ is \textbf{normalized} whenever $u_{\iota_a} = 1$ for all $a \in \Delta$. 
The set of normalized cocycles is denoted by 
\[
\check{Z}^2_1(\Delta,{}^2G) \ .
\]
Now, we say that $q = (u,g) \in \check{Z}^2(\Delta,{}^2G)$ is a \textbf{twisted connection} whenever
$u_{\ovl{b}} = u_b^{-1}$ for all $b \in \Sigma_1(\Delta)$.
We denote the set of twisted connections by 
\[
\check{U}^2(\Delta,{}^2G) \ .
\]
Since $\iota_a = \ovl{\iota_a}$ for all $a \in \Delta$, any twisted connection is normalized,
\emph{i.e.} $\check{U}^2(\Delta,{}^2G) \subseteq \check{Z}^2_1(\Delta,{}^2G)$.
Let now $u \in Z^1(\Delta,N)$; then $u$ defines the non-abelian 2-cocycle 
\[
du \, := \, (u,1) \ ,
\]
where $1$ is the constant map assigning the identity $1 \in G$. 
Since $u_{\ovl{b}} = u_b^{-1}$ for all $b$ (see the argument of \cite[Lemma 2.7]{Ruz05}, $du$ is a twisted connection.
\end{defn}

\paragraph{Non-abelian Cohomology.}
We say that $q=(u,g)$, $q'=(u',g')$ are \emph{cohomologous} whenever 
there is a pair $(v,h)$, where $v : \Sigma_0(\Delta) \to N$, $h : \Sigma_1(\Delta) \to G$,
such that
\begin{equation}
\label{eq.B1.01a}
u'_b \ = \  \unl{h_b} \, v_\bo \, u_b \, v_\bl^{-1}
\ \ \ \ , \ \ \ \
g'_c   \ = \  \wa{u}'_\co(h_\cz) \, h_\co \, \wa{v}_{\partial_{01}c}(g_c) \, h_\cl^{-1}
\ ,
\end{equation}
%
%
%
for all $b \in \Sigma_1(\Delta)$ and $c \in \Sigma_2(\Delta)$. In this case we write $(v,h) \in (q,q')$.
When the map $G \to N$ is injective, we may define $h$ in terms of $u$, $u'$ and $v$, and the second of (\ref{eq.B1.01a})
is automatically verified. 
%
%
Elementary computations show that if $q'' = (u'',g'')$ and $(v',h') \in (q',q'')$ then
\[
(1,1) \in (q,q)
\ \ \ , \ \ \ 
( v^{-1} \, , \, \wa{v}_{\partial_0}^{-1}(h) ) \in (q',q)
\ \ \ , \ \ \ 
( v'v \, , \, h' \wa{v}'_{\partial_0}(h) ) \in (q,q'') \ ,
\]
where $v_{\partial_0}$ stands for the mapping $b \mapsto v_\bo$.
Thus being cohomologous is an equivalence relation, and the corresponding set of equivalence classes, denoted by
\[
\check{H}^2(\Delta,{}^2G) \ ,
\]
is called the \emph{non-abelian cohomology of $\Delta$ with coefficients in ${}^2G$}.
We denote the class of $q=(u,g)$ in $\check{H}^2(\Delta,{}^2G)$ by $[q]=[u,g]$.

\begin{rem}
Let $q=(u,g) \in \check{Z}^2(\Delta, {}^2G)$. Since $u_{\iota_a} \in i(G)$ for all $a \in \Delta$,
we have that $q$ is cohomologous to a normalized cocycle.
In fact, we may apply to $q$ a pair $(1,h)$ such that $\unl{h_{\iota_a}} = u_{\iota_a}^{-1}$, $\forall a \in \Delta$,
obtaining the desired normalized cocycle.
\end{rem}


\

Let now $u,u' \in Z^1(\Delta,N)$ and $v \in (u,u')$. Then $(v,1) \in (du,du')$ and we obtain the map
\begin{equation}
\label{eq.B1.02}
d_* : H^1(\Delta,N) \to \check{H}^2(\Delta,{}^2G)
\ \ , \ \ 
d_*[u] := [du]
\ .
\end{equation}
%
%
%
In the following Lemma we will establish a relation between $\check{H}^2(\Delta,{}^2G)$ and $H^1(\Delta,N/G)$.
Before that we make some preparatory remarks, starting from the following notion.
\begin{defn}[Polarizations]
Let $X$ be a set with a period two bijection $x \mapsto x^c$ (so that $x^{c,c} = x$ for all $x \in X$).
A \textbf{polarization} of $X$ is given by a partition $X \, = \, X_+ \, \dot{\cup} \, X_0 \, \dot{\cup} \, X_-$ such that: \\
(1) $X_0 :=$ $\{ x \in X : x = x^c \}$;
(2) if $x \notin X_0$, then $x \in X_+$ if and only if $x^c \in X_-$.
\end{defn}
That polarizations exist it is easily seen by applying Zorn's Lemma to the projection of $X$ onto
the quotient $X_\simeq$ by the relation $x \simeq x'$ $\Leftrightarrow$ $x'=x$ or $x' = x^c$:
if $s : X_\simeq \to X$ is a section, then we may set
$X_+ := \{ s(y) , y \in X_\simeq \setminus (X_0)_\simeq \}$ and $X_- := X \setminus (X_+ \cup X_0)$.
In the next remark we apply the notion of polarization to sections of group morphisms.

\begin{rem}
\label{rem.section}
Let $\eta : N \to Q$ denote a group epimorphism. By Zorn's Lemma there is a unital section 
$s : Q \to N$, $\eta \circ s = id_Q$, $s(\tbe) = 1$.
In general $s$ is not a morphism, anyway we may always pick $s$ such that $s(\tbf^{-1}) = s(\tbf)^{-1}$ 
at least for all $\tbf \in Q$ such that $\tbf \neq \tbf^{-1}$.
For, we may consider, for any group $Y$, the polarization $Y = Y_+ \dot{\cup} Y_0 \dot{\cup} Y_-$ 
with respect to the operation $y \mapsto y^{-1}$.
Then, given an arbitrary unital section $s' : Q \to N$, we set
$s(\tbf) := s'(\tbf)$ for $\tbf \in Q_+ \cup Q_0$ and
$s(\tbf) := s'(\tbf^{-1})^{-1}$ for $\tbf \in Q_-$.
Note that if $\tbf \in Q_0$, \emph{i.e.} $\tbf = \tbf^{-1}$, 
then $s(\tbf)^{-1} = g_\tbf s(\tbf)$ for some $g_\tbf \in \ker \eta$.
\end{rem}

\begin{lem}
\label{lem.B1.1}
Let ${}^2G = (G \tto N)$ be a 2-group and $\Delta$ a poset. Then there is a diagram
\begin{equation}
\label{eq.B1.05}
\xymatrix{
   H^1(\Delta,N) 
    \ar[r]^-{d_*}
    \ar[dr]_-{ \mu_{1,*} }
&  \check{H}^2(\Delta,{}^2G)
    \ar[d]^-{ \mu_{2,*} }
\\ & H^1(\Delta,N/G)
}
\end{equation}
which commutes. If the map $i: G \to N$ is injective, then $\mu_{2,*}$ is bijective
and any $\nu \in \check{Z}^2(\Delta,{}^2G)$ is equivalent to a twisted connection.
\end{lem}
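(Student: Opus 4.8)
The plan is to realise $\mu_{1,*}$ and $\mu_{2,*}$ as the maps induced by the quotient $N\to N/G$, and then, when $i$ is injective, to prove bijectivity of $\mu_{2,*}$ by lifting cocycles and cohomologies of $N/G$ back to $N$, using injectivity of $i$ to manufacture the $G$-valued data. Concretely, for $q=(u,g)\in\check{Z}^2(\Delta,{}^2G)$ and $u\in Z^1(\Delta,N)$ I would set $\mu_2(u,g):=u_G$ and $\mu_1(u):=u_G$, where $(u_G)_b:=(u_b)_G\in N/G$. Reducing the first identity of (\ref{eq.B1.01}) modulo $i(G)$ shows $u_G\in Z^1(\Delta,N/G)$, and reducing the first identity of (\ref{eq.B1.01a}) modulo $i(G)$ shows $\mu_2$ carries cohomologous cocycles to cohomologous cocycles; the same two computations treat $\mu_1$. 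Since $\mu_2(du)=\mu_2(u,1)=\mu_1(u)$, the triangle commutes. This part is routine.

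Now suppose $i:G\to N$ is injective and fix a set-theoretic section $s$ of $N\to N/G$. For surjectivity of $\mu_{2,*}$, given $w\in Z^1(\Delta,N/G)$ I put $u_b:=s(w_b)$; the cocycle identity for $w$ forces $u_\co u_\cz u_\cl^{-1}\in i(G)$, so by injectivity of $i$ there is a unique $g_c\in G$ with $\unl{g_c}=u_\co u_\cz u_\cl^{-1}$, and by the first remark following the definition of $\check{Z}^2(\Delta,{}^2G)$ the second identity of (\ref{eq.B1.01}) then holds automatically, so $(u,g)\in\check{Z}^2(\Delta,{}^2G)$ and $\mu_{2,*}[u,g]=[w]$. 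For injectivity, if $\mu_{2,*}[u,g]=\mu_{2,*}[u',g']$ I pick $\bar v\in(u_G,u'_G)$ and lift $o\mapsto\bar v_o^{-1}$ through $s$ to a map $v:\Sigma_0(\Delta)\to N$; then $(u'_b)_G=(v_\bo u_b v_\bl^{-1})_G$, so $u'_b(v_\bo u_b v_\bl^{-1})^{-1}\in i(G)$ and injectivity of $i$ yields a unique $h_b\in G$ realising the first identity of (\ref{eq.B1.01a}), the second being automatic by the remark following (\ref{eq.B1.01a}). Hence $(v,h)\in(q,q')$ and $[q]=[q']$.

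The last assertion is the substantive one, and rests on a lemma about degenerate $1$-simplices: for any group $Q$ and any $w\in Z^1(\Delta,Q)$ one has $w_b=1$ whenever $\bo=\bl$. I would prove it by setting $o:=\bo=\bl$ (so $o\leq|b|$), $f:=(|b|,o,|b|)$ and $\ovl f:=(o,|b|,|b|)$, verifying that $(\ovl f,b,f;|b|)$, $(\ovl f,\iota_o,f;|b|)$ and $(\iota_o,\iota_o,\iota_o;o)$ all belong to $\Sigma_2(\Delta)$, and applying the cocycle relation $w_\cl=w_\co w_\cz$ to these three simplices to obtain $w_b=w_{\ovl f}w_f=w_{\iota_o}=w_{\iota_o}^2=1$. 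Granting this, fix a polarization $\Sigma_1(\Delta)=S_+\,\dot{\cup}\,S_0\,\dot{\cup}\,S_-$ with respect to $b\mapsto\ovl b$, so that $S_0=\{b:\bo=\bl\}$. Given $\nu=(u,g)$, set $w:=u_G$ and define $u':\Sigma_1(\Delta)\to N$ by $u'_b:=1$ for $b\in S_0$, $u'_b:=s(w_b)$ for $b\in S_+$, and $u'_{\ovl b}:=(u'_b)^{-1}$ for $b\in S_+$. Using the lemma on $S_0$ and the identity $w_{\ovl b}=w_b^{-1}$ for $1$-cocycles (\cite[Lemma 2.7]{Ruz05}) one checks $u'_G=w$ and $u'_{\ovl b}=(u'_b)^{-1}$ for every $b$; then, exactly as in the surjectivity step, $u'_\co u'_\cz(u'_\cl)^{-1}\in i(G)$ defines $g'$ with $(u',g')\in\check{Z}^2(\Delta,{}^2G)$. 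Thus $\nu':=(u',g')$ is a twisted connection with $\mu_{2,*}[\nu']=[w]=\mu_{2,*}[\nu]$, whence $[\nu']=[\nu]$ by the injectivity of $\mu_{2,*}$ already established.

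The main obstacle is this last step. The twisted-connection constraint $u_{\ovl b}=u_b^{-1}$ forces $u_b$ to be an involution when $b=\ovl b$, and such an involution need not exist among the lifts of $w_b$ to $N$, so the degenerate-simplex lemma — which makes $w_b$ trivial there and lets one take $u'_b=1$ — is genuinely needed; one must then choose the remaining values of $u'$ in an $\ovl{(\cdot)}$-symmetric fashion, which is precisely the role of the polarization of $\Sigma_1(\Delta)$. Everything else is bookkeeping, modulo the two "second identity is automatic" remarks attached to (\ref{eq.B1.01}) and (\ref{eq.B1.01a}).
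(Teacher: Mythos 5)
Your proposal is correct. The construction of $\mu_1,\mu_2$, the commutativity of the triangle, and the surjectivity and injectivity of $\mu_{2,*}$ via lifting through a set-theoretic section $s$ of $N\to N/G$ and using injectivity of $i$ to manufacture $g_c$ and $h_b$ are exactly the paper's argument. Where you diverge is the final claim that every class contains a twisted connection, and your route is genuinely different in three respects. First, for degenerate $1$-simplices ($\bo=\bl$, equivalently $b=\ovl b$) the paper argues that $w_b=\tbe$ because such a $b$ is a loop homotopic to the constant path, whereas you prove $w_b=1$ purely combinatorially by feeding the three $2$-simplices $(\ovl f,b,f;|b|)$, $(\ovl f,\iota_o,f;|b|)$ and $(\iota_o,\iota_o,\iota_o;o)$ into the cocycle identity; this is a correct and arguably more self-contained argument (it is essentially the degenerate case of \cite[Lemma 2.7]{Ruz05}). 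Second, the paper first replaces $s$ by a polarized section of $N/G$ (its Remark~\ref{rem.section}) so that $s(\tbf^{-1})=s(\tbf)^{-1}$ off the involutions, and then corrects the residual simplices with $b\neq\ovl b$, $w_b=w_b^{-1}$ by an explicit cochain $(1,h)$ with $h_{\ovl b}:=s(w_b)^{-1}s(w_{\ovl b})^{-1}$; you skip the polarization of $N/G$ entirely and instead build the lift $u'$ directly in an $\ovl{(\cdot)}$-symmetric way from a polarization of $\Sigma_1(\Delta)$ alone, which handles all cases at once. Third, the paper exhibits the explicit equivalence $(1,h)\in(\nu(w),(u,g'))$, while you conclude $[\nu]=[\nu']$ by appealing to the already-established injectivity of $\mu_{2,*}$; this is legitimate and shorter, at the price of being non-constructive about the equivalence, which the paper's version provides in closed form. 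Both arguments prove the statement.
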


\begin{proof}
With the notation (\ref{eq.mod}), for any $(u,g) \in \check{Z}^2(\Delta,{}^2G)$, $v \in Z^1(\Delta,N)$ we define
\[
\{ \mu_1(v) \}_b \, := \, v_{b,G}
\ \ , \ \ 
\{ \mu_2(u,g) \}_b \, := \, u_{b,G} \, \in N/G
\ \ , \ \ 
b \in \Sigma_1(\Delta)
\ .
\]
We have $\mu_2 \circ d = \mu_1$ and $\mu_1 , \mu_2 , d$ preserve coboundaries,
thus the diagram (\ref{eq.B1.05}) is commutative as desired.
We now assume that $i$ is injective and for simplicity make the identification $G \equiv i(G)$.
To construct an inverse for $\mu_{2,*}$ we pick a unital section 
$s : N/G \to N$, $s(\tbf)_G = \tbf$.
%
%
Given $w \in Z^1(\Delta,N/G)$, we compute
\[
\{ s(w_\co) \, s(w_\cz) \, s(w_\cl)^{-1} \}_G  \ = \ 
w_\co \, w_\cz \, w_\cl^{-1} \ = \
\tbe
\ \ , \ \ 
\forall c \in \Sigma_2(\Delta) \ .
\]
Thus $\delta(w)_c :=$ $s(w_\co)  s(w_\cz)  s(w_\cl)^{-1} \in$ $G$, and we set
\[
\nu(w) := ( s(w) , \delta(w) ) \in \check{Z}^2(\Delta,{}^2G) \ \ \ , \ \ \ \forall w \in Z^1(\Delta,N/G) \ .
\]
By construction $\mu_2 \circ \nu$ is the identity of $Z^1(\Delta,N/G)$ so $\mu_{2,*}$ is surjective.

Now, let us consider $q=(u,g)$, $q' = (u',g') \in \check{Z}^2(\Delta,{}^2G)$.
If $\mu_{2,*}([q]) = \mu_{2,*}([q'])$ then there is a 0-cochain $\zeta = \{ \zeta_o \in N/G \}$ such that
$\zeta_\bo \, u'_{b,G} \, \zeta_\bl^{-1} = u_{b,G}$, $\forall b$.
Defining
$v_o := s(\zeta_o) \in N$, $\forall o \in \Delta$,
we note that
\[
h_b^{-1} \ := \ v_\bo \, u'_b \, v_\bl^{-1} \, u_b^{-1} \, \in G \ \ \ , \ \ \ \forall b \in \Sigma_1(\Delta) \ .
\]
At this point it is trivial to check that $(v,h) \in (q,q')$, and $\mu_{2,*}$ is injective.

We now prove that $\nu(w)$ is equivalent a twisted connection.
To this end we note that $w_{\ovl{b}} = w_b^{-1}$ for all $b$, and by Remark \ref{rem.section} we may assume
$s(w_{\ovl{b}}) =$ $s(w_b^{-1}) =$ $s(w_b)^{-1}$
for all $b$ such that $w_b \neq w_b^{-1}$. Instead, the case $w_b = w_b^{-1}$ may occur for two possible reasons:
(1) $b = \ovl{b}$: in this case we have necessarily $w_b = w_{\ovl{b}} = \tbe$, because $b$ can be regarded as a 
    loop from $\bl$ to $\bl = \bo$ homotopic to the constant path $\iota_\bl$ 
    (see \S \ref{sec.A1} and references cited therein);
    thus $s(w_b) = s(w_{\ovl{b}}) = 1$ and no problem arises; 
(2) $b \neq \ovl{b}$ and $w_b = w_b^{-1}$ ($= w_{\ovl{b}}$): in this case 
    $w_{\ovl{b}} w_b = \tbe$ and $s(w_{\ovl{b}}) s(w_b) \in G$.
These considerations imply that the only 1--simplices that are an obstacle to make $\nu(w)$ a twisted connection are 
those of the type (2).
Given a polarization 
$\Sigma_1(\Delta) =$ $\Sigma_1(\Delta)_+ \dot{\cup} \Sigma_1(\Delta)_0 \dot{\cup} \Sigma_1(\Delta)_-$
defined with respect to the operation $b \mapsto \ovl{b}$, we set
\[
h_{\ovl{b}} \, := \, 
\left\{
\begin{array}{l}
s(w_b)^{-1} s(w_{\ovl{b}})^{-1} \, \in G 
\ \ \ , \ \ \ 
\forall b \in \Sigma_1(\Delta)_+ \, : \, w_b = w_b^{-1} \, , \\
1 \ \ \ {\mathrm{otherwise}} \ .
\end{array}
\right.
\]
Note that if $b \in \Sigma_1(\Delta)_+$ (so that $b \neq \ovl{b}$) and $w_b = w_b^{-1}$, then $h_b = 1$.
In this way we get the cochain $h : \Sigma_1(\Delta) \to G$, and defining 
$u_b := h_b s(w_b)$ for all $b$
we get, in particular for $b \in \Sigma_1(\Delta)_+ $ with $w_b = w_b^{-1}$,
\[
u_{\ovl{b}} \ = \ h_{\ovl{b}} \, s(w_{\ovl{b}}) \ = \ s(w_b)^{-1} \ = \ u_b^{-1} \ .
\]
In the other cases $h_b = 1$ and $s(w_{\ovl{b}}) = s(w_b)^{-1}$, so $u_{\ovl{b}} = u_b^{-1}$.
In conclusion, $(1,h)$ makes $\nu(w)$ equivalent to the twisted connection 
$(u,g')$, $g'_c := \wa{u}'_\co(h_\cz) h_\co \delta(w)_c h_\cl^{-1}$.
\end{proof}

\paragraph{Abelian coefficients and Dixmier-Douady classes.}
With the above notation, let $G$ be abelian and $N$ act trivially on $G$. 
Then any non-abelian 2-cocycle $(u,g)$ defines the 2-cocycle $g \in Z^2(\Delta,G)$.
%
%
If $q=(u,g)$, $q'=(u',g') \in \check{Z}^2(\Delta,{}^2G)$ and $(v,h) \in (q,q')$,
then by the second of (\ref{eq.B1.01a}) we have $h \in (g,g')$ and we obtain the map
\begin{equation}
\label{eq.B1.03}
\delta : \check{H}^2(\Delta,{}^2G) \to H^2(\Delta,G) 
\ \ , \ \ 
\delta[u,g] := [g]
\ ,
\end{equation}
that we call the \emph{Dixmier-Douady class}. 
Note that $\delta \circ d_*[v] = [{\bf 1}]$ for all $[v] \in H^1(\Delta,N)$, 
thus $\delta$ measures the obstruction for $(u,g)$ being equivalent to a $N$-cocycle.

\begin{ex}[$C^*$-bundles with fibre the compact operators]
Let $H_\bullet$ be a Hilbert space and $K(H_\bullet)$ denote the $C^*$-algebra of compact operators.
It is well-known that $\Aut K(H_\bullet) \simeq PU(H_\bullet) := U(H_\bullet) / \bT$, where $\bT$ is identified with the centre of $U(H_\bullet)$.
By (\ref{eq.A6}) and the results of \S \ref{sec.A3}, the set of isomorphism classes of $C^*$-bundles over $\Delta$ with fibre $K(H_\bullet)$ is
\[
\bun(\Delta,K(H_\bullet)) \, \simeq \, 
H^1(\Delta,PU(H_\bullet)) \, \simeq \, 
H^1( \pi_1(\Delta) , PU(H_\bullet)) \ .
\]
We have the 2-group ${}^2\bT := ( \bT \tto U(H_\bullet) )$, with $i : \bT \to U(H_\bullet)$ injective and $U(H_\bullet)$ acting trivially on $\bT$ by adjoint action.
The maps of the previous Lemma take the form
\[
\xymatrix{
H^1(\Delta,U(H_\bullet)) 
 \ar[r]^-{d_*} 
 \ar[dr]_-{ \mu_{1,*} } &
\check{H}^2(\Delta,{}^2\bT)  
 \ar[r]^-{\delta} 
 \ar[d]^-{ \mu_{2,*} }  &
H^2(\Delta,\bT) \\
& H^1(\Delta,PU(H_\bullet))  & 
}
\]
with $\mu_{2,*}$ bijective and $\mu_{1,*}[v] := [\ad v]$, where $\ad$ is the adjoint action.
The argument of \cite[end of \S 4]{Vas12} shows that if the group cohomology $H^2( \pi_1(\Delta) , \bT )$ is not trivial,
then $\delta$ is not trivial (and $\mu_{1,*}$ is not surjective).
%
%
%
%
In terms of poset geometry we have:
given a $C^*$-bundle $\mA = (A,\jmath)_\Delta$ with fibre $K(H_\bullet)$ and associated class $[w] \in H^1(\Delta,PU(H_\bullet))$, 
if $\delta \circ \mu_{2,*}^{-1}[w] \neq [{\bf 1}]$ then $\mA$ cannot be of the form $\mK(\mH) = (K(H),\ad u)_\Delta$
for some Hilbert bundle $\mH=(H,u)_\Delta$.
Thus $\delta$ plays for $K(H_\bullet)$-bundles over $\Delta$ the same role 
the classical Dixmier-Douady invariant $\delta_{DD}$ plays for $K(H_\bullet)$-bundles over a manifold $M$ \cite[\S 10.7-8]{Dix}.
When $\Delta$ is a good base for $M$, $\delta$ can be interpreted as the obstrution for a \emph{flat} $K(H_\bullet)$-bundle 
$\sA \to M$ being of the type $\sK(\sH) \to M$ for a \emph{flat} Hilbert bundle $\sH \to M$, 
thus $\delta(\sA) = [{\bf 1}]$ implies $\delta_{DD}(\sA) = 0$. Note that $\delta_{DD}(\sA) \in H^3(M,\bZ) \simeq$ $H^2(M,\bT)$,
thus it should be of interest to write a map 
$H^2(\Delta,\bT) \to H^2(M,\bT)$
describing the relation between $\delta$ and $\delta_{DD}$.
\end{ex}

%
%
%
%

\paragraph{Holonomy of non-abelian cocycles.}
We now construct non-abelian cocycles over $\pi_1(\Delta)$ starting from non-abelian cocycles over $\Delta$.
To this end we give a notion of non-abelian cohomology of a group $\Pi$ with coefficients in a 2--group ${}^2G = (G \tto N)$. 
A reference for this subject is \cite{Bro}, nevertheless the definitions we give here are modeled on the theory of twisted $C^*$-dynamical systems \cite{ZM68,BS70,PR89}.
A \emph{non-abelian 2--cocycle} is a pair $( \nu , \gamma )$, where the maps
$\nu : \Pi \to N$, $\gamma :\Pi \times \Pi \to G$
fulfill, for all $l,m,n \in \Pi$,
\begin{equation}
\label{eq.B3.4}
\left\{
\begin{array}{ll}
\nu_l \, \nu_m \, = \, \unl{\gamma(l,m)} \, \nu_{lm} \ , \\ \\
\gamma(l,m) \, \gamma(lm,n) \, = \, \wa{\nu}_l(\gamma(m,n)) \, \gamma(l,mn) \ , \\ \\
\gamma(l,1) \, = \, \gamma(1,l) \, = \, 1 \ .
\end{array}
\right.
\end{equation}
When $i : G \to N$ is injective the second of (\ref{eq.B3.4}) follows from the first one.
We say that $(\nu',\gamma')$ is cohomologous to $(\nu,\gamma)$ whenever there is a pair
$(w,k)$, with $w \in N$ and $k : \Pi \to G$,
such that
\begin{equation}
\label{eq.B3.4a}
\nu'_l \ = \   \unl{k_l} \, w \, \nu_l \, w^{-1}
\ \ \ , \ \ \ 
\gamma'(l,m) \ = \ \wa{\nu}'_l(k_m) \, k_l \, \wa{w}(\gamma(l,m)) \, k_{lm}^{-1} \ ,
\end{equation}
for all $l,m \in \Pi$. As for cocycles over posets, it can be proved that (\ref{eq.B3.4a}) defines an equivalence relation.
The corresponding set of equivalence classes is denoted by $\check{H}^2(\Pi,{}^2G)$.

Let now $q = (u,g) \in \check{Z}^2_1(\Delta,{}^2G)$ and $a \in \Delta$. We set
\begin{equation}
\label{def.up}
u_p \, := \, u_{b_n} \circ \cdots \circ u_{b_1} \in N
\ \ , \ \ 
p : a \to a \, , \, p = b_n * \ldots * b_1 \ .
\end{equation}
Since $q$ is normalized, we have $u_{\iota_a} = 1$.
Now, $u_{b,G} \in N/G$, $b \in \Sigma_1(\Delta)$, defines a cocycle $u_G \in Z^1(\Delta,N/G)$ (Lemma \ref{lem.B1.1}),
thus $u_{p,G} = u_{p',G}$ for any loop $p'$ homotopic to $p$, and the so-obtained map 
$u_G : \pi_1(\Delta) \to N/G$
is multiplicative, \cite[Prop.3.8]{RR06}.

We now set $\Pi_1^a(\Delta) := \{ p : a \to a \}$. Then the homotopy relation yields a surjective map 
$\Pi_1^a(\Delta) \to \pi_1(\Delta)$, $p \mapsto [p]$,
and we pick a section
$s : \pi_1(\Delta) \to \Pi_1^a(\Delta)$, $[s(l)] = l$, $\forall l \in \pi_1(\Delta)$,
such that $s(1) = \iota_a$. Thus we define the map
$\bar{u} : \pi_1(\Delta) \to N$, $\bar{u}_l := u_{s(l)}$.
By the previous remarks, for all $l,m \in \pi_1(\Delta)$ we have
\begin{equation}
\label{eq.B3.5}
\bar{u}_1 \, = \, 1
\ \ \ , \ \ \
\bar{u}_l \, \bar{u}_m \, \bar{u}_{lm}^{-1} \, =: \, \gamma(l,m)  \, \in i(G)
\ .
\end{equation}
Note that $i(G)$ is normal in $N$, so the 2--group 
${}^2G_i = (i(G) \tto N)$
is defined. There is an obvious morphism ${}^2G \to {}^2G_i$ defined by $i$: it is an isomorphism when $i$ is injective.
\begin{prop}[The holonomy map]
\label{prop.B1.1}
The assignment $(u,g) \mapsto ( \bar{u} , \gamma )$ induces a map
\begin{equation}
\label{eq.B1.1}
\check{H}^2(\Delta,{}^2G) \to \check{H}^2(\pi_1(\Delta),{}^2G_i) \ ,
\end{equation}
which is independent of the choice of the section $s : \pi_1(\Delta) \to \Pi_1^a(\Delta)$, $s(1) = \iota_a$.
\end{prop}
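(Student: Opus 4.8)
The plan is to verify two things: first, that $(\bar u,\gamma)$ as constructed in \eqref{eq.B3.5} is genuinely an element of $\check Z^2(\pi_1(\Delta),{}^2G_i)$; second, that the class $[\bar u,\gamma] \in \check H^2(\pi_1(\Delta),{}^2G_i)$ depends neither on the chosen section $s$ nor on the representative $q=(u,g)$ within its cohomology class in $\check H^2(\Delta,{}^2G)$. For the first point, the normalization $\bar u_1 = u_{\iota_a} = 1$ is already recorded, and $\gamma(l,m) \in i(G)$ is the content of \eqref{eq.B3.5}; so what remains is to check the two cocycle identities in \eqref{eq.B3.4} with $N$-coefficients replaced by ${}^2G_i$. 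The first identity $\bar u_l \bar u_m = \underline{\gamma(l,m)}\,\bar u_{lm}$ is just the definition of $\gamma(l,m)$. The second (the ``pentagon'') must be derived; since in ${}^2G_i$ the map $i(G)\hookrightarrow N$ is injective, the remark following \eqref{eq.B3.4} tells us it follows automatically from the first, so nothing further is needed there. Finally $\gamma(l,1)=\gamma(1,l)=1$ follows from $\bar u_1 = 1$.

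Next I would address independence of the section. Suppose $s,s' : \pi_1(\Delta)\to\Pi_1^a(\Delta)$ are two sections with $s(1)=s'(1)=\iota_a$. For each $l$, the loops $s(l)$ and $s'(l)$ are homotopic, so by the homotopy-invariance of $u_{\bullet,G}$ already established, $u_{s(l),G} = u_{s'(l),G}$ in $N/G$; hence $k_l := u_{s'(l)}\,u_{s(l)}^{-1} \in i(G)$. A direct check — using $\bar u'_l = u_{s'(l)} = k_l\,\bar u_l$ and the definitions of $\gamma,\gamma'$ — shows that the pair $(w,k)$ with $w=1$ and $k=(k_l)_l$ satisfies \eqref{eq.B3.4a}, so $(\bar u',\gamma')$ is cohomologous to $(\bar u,\gamma)$. (Again, since $i(G)\hookrightarrow N$ is injective in ${}^2G_i$, only the first equation of \eqref{eq.B3.4a} needs verifying.) The key subtlety here is to confirm $k_l \in i(G)$, which is exactly where the fact that $u_G$ descends to a well-defined multiplicative map $\pi_1(\Delta)\to N/G$ is used; that fact is cited from \cite[Prop.3.8]{RR06} and Lemma~\ref{lem.B1.1}.

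For independence of the representative $q$, let $q'=(u',g')$ with $(v,h)\in(q,q')$ as in \eqref{eq.B1.01a}, and we may assume both $q,q'$ are normalized (every cocycle is cohomologous to a normalized one). Fixing a section $s$, set $w := v_a \in N$ and, for $l\in\pi_1(\Delta)$ with $s(l)=b_n*\cdots*b_1$, unwind $u'_{s(l)} = u'_{b_n}\circ\cdots\circ u'_{b_1}$ using the first relation in \eqref{eq.B1.01a} termwise. The telescoping of the $v$-factors (the endpoints match along the path, and since $s(l)$ is a loop based at $a$ both ends contribute $v_a = w$) produces $\bar u'_l = \underline{k_l}\, w\,\bar u_l\, w^{-1}$ for a suitable $k_l \in G$ assembled from the $h_{b_j}$'s conjugated appropriately; the $\gamma$-component then works out to the second line of \eqref{eq.B3.4a} by the same bookkeeping. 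So $(w,(k_l)_l) \in ((\bar u,\gamma),(\bar u',\gamma'))$ and the classes agree. I expect this last step — carefully organizing the telescoping of the coboundary $(v,h)$ along a path so that it collapses to a single pair $(w,k)$ — to be the main technical obstacle, essentially because one is repackaging a $\Sigma_1(\Delta)$-indexed coboundary into a $\pi_1(\Delta)$-indexed one and must track how the $\widehat u$-twists propagate through the composition; but it is a finite, mechanical induction on the length of the loop, entirely parallel to the scalar-cocycle argument behind the isomorphism \eqref{eq.A6}.
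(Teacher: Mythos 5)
Your proposal is correct and follows the same overall strategy as the paper: reduce to normalized representatives, handle section-independence via $k_l := u_{s'(l)}\,\bar{u}_l^{-1} \in i(G)$ (homotopy invariance mod $G$), and handle representative-independence with $w = v_a$ and a map $k : \pi_1(\Delta) \to i(G)$, relying on injectivity of $i(G) \hookrightarrow N$ to dispense with the second equations of (\ref{eq.B3.4}) and (\ref{eq.B3.4a}). The one place where you diverge is the step you flag as the main technical obstacle: the paper avoids the telescoping induction entirely. Instead of assembling $k_l$ from the $h_{b_j}$'s by pushing $i(G)$-factors through the product, it passes the relation $u'_b = \unl{h_b}\,v_{\bo}\,u_b\,v_{\bl}^{-1}$ to the quotient $N/G$, where it reads $u'_{b,G} = v_{\bo,G}\,u_{b,G}\,v_{\bl,G}^{-1}$ and telescopes trivially along a loop to $\bar{u}'_{l,G} = v_{a,G}\,\bar{u}_{l,G}\,v_{a,G}^{-1}$; it then simply \emph{defines} $k_l := \bar{u}'_l\,v_a\,\bar{u}_l^{-1}\,v_a^{-1}$, which lies in $i(G)$ precisely because its class mod $G$ is trivial, and the first equation of (\ref{eq.B3.4a}) holds by construction. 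Your explicit induction works (normality of $i(G)$ in $N$ lets the factors $v_{\partial_0 b_k}^{-1}\unl{h_{b_k}}v_{\partial_0 b_k}$ be collected on the left), but the quotient argument shows the bookkeeping is unnecessary — the same device the paper uses again for section-independence and in Lemma \ref{lem.B1.1}.
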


\begin{proof}
If $q=(u,g)$ is normalized, then by (\ref{eq.B3.5}) we have that (\ref{eq.B3.4}) is fulfilled and $(\bar{u},\gamma) \in \check{Z}^2( \pi_1(\Delta) , {}^2G_i )$.
If $q$ is not normalized, then we may find a normalized cocycle $q_1 = ( u_1 , g_1 )$ cohomologous to $q$, and assign to $q$ the group cocycle 
$( \bar{u}_1 , \gamma_1 )$ defined as before.
To show that our definition is independent of the normalized cocycle
we pick a further $q'_1 = ( u'_1 , g'_1 ) \in$ $\check{Z}^2_1(\Delta,{}^2G)$ cohomologous to $q$,
and verify that $( \bar{u}'_1 , \gamma'_1 )$ is cohomologous to $( \bar{u}_1 , \gamma_1 )$.
To this end, we note that by hypothesis there is a pair 
$(v,h) \in ( q_1 , q'_1 )$.
Passing to maps modulo $G$ we have
$\bar{u}'_{1,b,G} =$ $v_{\bo,G} \, \, \bar{u}_{1,b,G} \, \, v_{\bl,G}^{-1}$,
thus
$\bar{u}'_{1,p,G} =$ $v_{a,G} \, \, \bar{u}_{1,p,G} \, \, v_{a,G}^{-1}$
for any $p : a \to a$. This implies 
\[
k_l \ := \ \bar{u}'_{1,l} \, v_a \, \bar{u}_{1,l}^{-1} \, v_a^{-1}  \, \in i(G) \ ,
\]
%
%
and $(v_a,k)$ makes $( \bar{u}_1 , \gamma_1 )$ and $( \bar{u}'_1 , \gamma'_1 )$ cohomologous, as desired.

We now check that (\ref{eq.B1.1}) is well-defined: if $q' \in \check{Z}^2(\Delta,{}^2G)$ is cohomologous to $q$,
then $q'$ is also cohomologous to the normalized cocycle $q_1$,
thus by the previous considerations the group cocycles assigned to $q$ and $q'$ are cohomologous.
We conclude that (\ref{eq.B1.1}) is well-defined.

As a final step we verify independence of $s$.
Let $s' : \pi_1(\Delta) \to \Pi_1^a(\Delta)$ be a section such that $s'(1) = \iota_a$. We set 
$\nu'_l := u_{s'(l)}$
and
$h_l := \nu'_l \bar{u}_l^{-1}$, $\forall l \in \pi_1(\Delta)$.
We note that, since by construction $s(l)$ is homotopic to $s'(l)$, we have $h_l \in i(G)$.
Then we compute
\[
\begin{array}{lcl}
\gamma'(l,m) & = & {\nu'}_l \, {\nu'}_m \, {\nu'}_{lm}^{-1}  \\ & = & 
{\nu'}_l \, h_m \bar{u}_m \, \bar{u}_{lm}^{-1}  h_{lm}^{-1}   \\ & = & 
\wa{\nu}'_l(h_m) \, {\nu'}_l \bar{u}_m \, \bar{u}_{lm}^{-1}  h_{lm}^{-1}  \\ & = &
\wa{\nu}'_l(h_m) \, h_l \bar{u}_l \bar{u}_m \, \bar{u}_{lm}^{-1}  h_{lm}^{-1}  \\ & = &
\wa{\nu}'_l(h_m) \, h_l \, \gamma(l,m) \, h_{lm}^{-1}  \ .
\end{array}
\]
Thus the pair $(1,h)$ makes $(\bar{u},\gamma)$ and $(\nu',\gamma')$ cohomologous, as desired.
\end{proof}



\subsection{Gerbes}
\label{sec.B2}

The basic idea of a gerbe over a poset is that we have a family $\{ B_o \}_{o \in \Delta}$ of $G$-spaces, with inclusion morphisms
%
%
that may fail to fulfill the precosheaf relations due to an obstruction described by a ${}^2G$-cocycle on $\Delta$.
A characteristic property of gerbes is that they define precosheaves when the family of fixed-point subspaces of the fibres is considered.

Let ${}^2G = (G\tto N)$ be a 2--group and $C$ a $G$-category.
Given a 2-cocycle $q = (u,g) \in \check{Z}^2(\Delta,{}^2G)$,
a \emph{$q$-gerbe} (or simply \emph{gerbe}) is given by a pair
$\check{\mB} = (B,\jmath)_\Delta$, 
where the families of $G$-spaces
$B = \{ B_a \in \obj C \}_{a \in \Delta}$
and of morphisms
$\jmath = \{ \jmath_b : B_\bl \to B_\bo \}_{b \in N_1(\Delta)}$
fulfil
\begin{equation}
\label{eq.B2.1}
\alpha_{\partial_{01}c}(g_c) \circ \jmath_\cl \ = \ \jmath_\co \circ \jmath_\cz
\ \ \ , \ \ \ 
\jmath_b \circ \alpha_\bl(g) \ = \ \alpha_\bo(\wa{u}_b(g)) \circ \jmath_b \ ,
\end{equation}
for all $c \in N_2(\Delta)$, $g \in G$, $b \in N_1(\Delta)$.
We call $B$ the \emph{set of fibres} and $\jmath$ the \emph{gerbe structure}.
Some remarks follow.
\begin{itemize}
\item For any $o \in \Delta$ we set $A_o := \{ v \in B_o : \{\alpha_o(g)\}(v) = v \, , \, \forall g \in G \}$. 
      The second of (\ref{eq.B2.1}) implies $\jmath_b(A_\bl) \subseteq A_\bo$, 
      and defining $\imath_b := \jmath_b | A_\bl$, $b \in N_1(\Delta)$, we find that $\imath$ fulfils the precosheaf relations.
      Thus we get the \emph{fixed-point precosheaf} $\mA := (A,\imath)_\Delta$.
\item When $\jmath_b$ is invertible for all $b \in \Sigma_1(\Delta)$ we say that $\check{\mB}$ is a \emph{bundle gerbe}. 
      In this case we can extend $\jmath$ from $N_1(\Delta)$ to $\Sigma_1(\Delta)$, by defining
      $\jmath_b := \jmath_{|b| \bo}^{-1} \circ \jmath_{|b| \bl}$, $\forall b \in \Sigma_1(\Delta)$.
      In this way, (\ref{eq.B2.1}) extends to arbitrary simplices 
      $c \in \Sigma_2(\Delta)$, $b \in \Sigma_1(\Delta)$.
      %
      %
      Note that by definition 
      \begin{equation}
      \label{eq.tc1}
      \jmath_{\ovl{b}} \, = \, \jmath_b^{-1} \ \ \ , \ \ \ \forall b \in \Sigma_1(\Delta) \ .
      \end{equation}
\item If $g_c \equiv 1$ then $u \in Z^1(\Delta,N)$ and $\check{\mB}$ is a $u$-precosheaf.
      In this case we write $\mB \equiv \check{\mB}$.
      %
      %
      %
\end{itemize}

\begin{defn}
\label{def.gmor}
Let ${}^2G' = (G \tto N')$ be a 2--group
{\footnote{Note that $G$ is left unchanged with respect to the definition of ${}^2G$ used in the previous lines.}},
$q'=(u',g') \in \check{Z}^2(\Delta,{}^2G')$ and
$\check{\mB}' = (B',{\jmath \, }')_\Delta$
a $q'$-gerbe. A \textbf{gerbe morphism} $\eta_\beta : \check{\mB} \to \check{\mB}'$ is given by families
$\eta_o : B_o \to B'_o$,
$\beta_b \in \alpha'_\bo(G)$,
$o \in \Delta$, $b \in N_1(\Delta)$,
such that
\begin{equation}
\label{eq.B2.1a}
\beta_b \circ \eta_\bo \circ \jmath_b \, = \, {\jmath \, }'_b \circ \eta_\bl
\ \ \ \ , \ \ \ \
\alpha'_o(g) \circ \eta_o \, = \, \eta_o \circ \alpha_o(g) \ ,
\end{equation}
for all $g \in G$, where $\alpha'_o := \alpha_{B'_o}$.
When $\beta_b$ is the identity for all $b$ we write $\eta \equiv \eta_\beta$.
We say that $\eta_\beta$ is an \textbf{isomorphism} whenever any $\eta_o$ is an isomorphism.
\end{defn}

\begin{rem}
The factors $\beta_b$ in the previous definition are motivated by the wish of making the notion of gerbe isomorphism
coherent with the one of cocycle equivalence (\ref{eq.B1.01a});
their presence implies that non-isomorphic precosheaves $\mB$, $\mB'$
may become isomorphic when regarded as gerbes, compare (\ref{eq.B2.1a}) with (\ref{eq.A7a}).
\end{rem}

\begin{ex}[2--group gerbes]
\label{ex.Gadj}
Let ${}^2G = \{ {}^2G_o := (G_o \tto N_o) \}_{o \in \Delta}$ be a collection of 2--groups.
We assume that there is a fixed 2--group
${}^2G_\bullet = (G_\bullet \tto N_\bullet)$
with isomorphisms $\nu_o : {}^2G_\bullet \to {}^2G_o$, $o \in \Delta$.
By (\ref{eq.2G.1}) any $\nu_o$ defines the adjoint action
\[
\alpha_o : G_\bullet \to \Aut {}^2G_o
\ \ , \ \ 
\{ \alpha_o(g) \} v := \{ \ad \nu_o(g) \} (v)  
\ , \
g \in G_\bullet , v \in N_o
\ .
\]
Let us now consider  $q = (u,g) \in \check{Z}^2( \Delta , {}^2G_\bullet )$. A \emph{2--group $q$-gerbe} is a pair
${}^2\check{\mG}  =$ $( {}^2G , \jmath )_\Delta$,
where $\jmath_b : {}^2G_\bl \to {}^2G_\bo$, $b \in \Sigma_1(\Delta)$, are isomorphisms fulfilling (\ref{eq.B2.1}).
${}^2\check{\mG}$ defines:
\begin{enumerate}
\item the group gerbe $\check{\mG} = (G,\jmath)_\Delta$, where $\jmath_b : G_\bl \to G_\bo$, $b \in \Sigma_1(\Delta)$, 
      are (with an abuse of notation) the group isomorphisms induced by the initial 2--group isomorphisms;
\item the group \emph{bundle} $Q\mG = (QG,\bar{\jmath})_\Delta$, having defined $QG_o := N_o/G_o$ for all $o \in \Delta$, and 
      $\bar{\jmath}_b : QG_\bl \to QG_\bo$, $b \in \Sigma_1(\Delta)$, as the group isomorphisms induced by $\jmath$.
      %
      %
\end{enumerate}
Let now, in particular, $q$ be a twisted connection.
We set ${}^2G_o := {}^2G_\bullet$ for all $o \in \Delta$ and $\jmath_b := \ad u_b$ for all $b \in \Sigma_1(\Delta)$
(note that, by (\ref{eq.2G.1}), $\jmath_b \in \Aut {}^2G_\bullet$).
Then the pair
\[
{}^2\check{\mG}_q \, = \, ( {}^2G_\bullet , \ad u )_\Delta
\]
is a 2--group $q$-gerbe, that we call the \emph{standard $q$-gerbe}.
More in general, we say that ${}^2\check{\mG} = ( {}^2G , \jmath )_\Delta$ is a standard $q$-gerbe whenever 
$\jmath_b = \nu_\bo \circ \ad u_b \circ \nu_\bl^{-1}$, $b \in \Sigma_1(\Delta)$;
these gerbes are all isomorphic to the standard $q$-gerbe, in fact the family $\nu = \{ \nu_o \}$ realizes 
the isomorphism $\nu : {}^2\check{\mG}_q \to {}^2\check{\mG}$.
\end{ex}

\subsection{Transition maps and twisted holonomy}
\label{sec.B3}

As we mentioned in \S \ref{sec.A} holonomy plays a fundamental role in the theory of bundles over posets.
Thus we discuss the analogue of holonomy for gerbes, recognizing that it takes the form of a
non-abelian cocycle in the cohomology of $\pi_1(\Delta)$.

Let ${}^2G = (G \tto N)$ be a 2--group, $q = (u,g) \in \check{Z}^2(\Delta,{}^2G)$,
$C$ a $G$-category and $\check{\mB} = (B,\jmath)_\Delta$ a bundle $q$-gerbe as in (\ref{eq.B2.1}).
We pick $a,o \in \Delta$ and define
\begin{equation}
\label{eq.B3.1}
\jmath_p \, := \, \jmath_{b_n} \circ \ldots \circ \jmath_{b_1} \, : B_a \to B_o
\ \ , \ \ 
\forall p = b_n * \ldots * b_1 \, , \, p : a \to o \ .
\end{equation}
Iterating (\ref{eq.tc1}) on $b_1 , \ldots , b_n$ we get
\begin{equation}
\label{eq.tc2}
\jmath_{\ovl{p}} \, = \, \jmath_p^{-1} \ \ \ , \ \ \ \forall p : a \to o \ .
\end{equation}
Let $p' : a \to o$ be homotopic to $p$.
Then the first of (\ref{eq.B2.1}) implies that in general $\jmath_p \neq \jmath_{p'}$
unless $g_c \equiv 1$, that is, when $\check{\mB}$ is a bundle, see \S \ref{sec.A3};
anyway, since $\jmath$ fulfils (\ref{eq.A7}) up to elements of $\alpha_o(G)$, there is $g(p,p') \in G$ such that
$\jmath_p  =  \alpha_o(g(p,p')) \circ \jmath_{p'}$.

We define the \emph{holonomy group} 
\[
\Hol_a\check{\mB} \, := \, \alpha_a(G) \vee \{ \jmath_p \in \Aut B_a \, : \, p : a \to a \} \, \subseteq \Aut B_a \ ,
\]
where $\vee$ stands for the group generated by the two factors.
The following facts hold:
\begin{itemize}
\item Let $b \in \Sigma_1(\Delta)$. Then for any $g \in G$ and $p : \bl \to \bl$ we have
      $\jmath_b \circ \alpha_\bl(g) \circ \jmath_p \circ \jmath_b^{-1} = 
      \alpha_\bo(\wa{u}_b(g)) \circ \jmath_{b*p*\ovl{b}}$.
      Thus there are isomorphisms
      \begin{equation}
      \label{eq.B3.1c}
      \ad \jmath_b : \Hol_\bl\check{\mB} \to \Hol_\bo\check{\mB} 
      \ \ \ , \ \ \ 
      \ad \jmath_b(z) := \jmath_b \circ z \circ \jmath_b^{-1}
      \ .
      \end{equation}
\item Let $g \in G$ and $p : a \to a$, $p = b_n * \ldots * b_1$. Then applying the second of (\ref{eq.B2.1}) we have
      \begin{equation}
      \label{eq.B3.1e}
      \jmath_p \circ \alpha_a(g) \circ \jmath_p^{-1} \, = \, \alpha_a(\wa{u}_p(g)) \, \in \alpha_a(G) \ ,
      \end{equation}
      where 
      $\wa{u}_p := \wa{u}_{b_n} \circ \ldots \circ \wa{u}_{b_1} \in \Aut G$.
      Thus $\alpha_a(G)$ is normal in $\Hol_a\check{\mB}$ and we have the \emph{holonomy 2--group at $a \in \Delta$}
      \[
      {}^2G_{a,\hhol} \, := \, ( \alpha_a(G) \tto \Hol_a\check{\mB} ) 
      \ .
      \]
      %
      %
      Applying  Lemma \ref{lem.B1.1} and (\ref{eq.A6}) we obtain isomorphisms
      \begin{equation}
      \label{eq.B3.2}
      \check{H}^2(\Delta,{}^2G_{a,\hhol} ) \, \stackrel{\simeq}{\rightarrow} \,
      H^1(\Delta,QG_{a,\hhol}) \, \stackrel{\simeq}{\rightarrow} \,
      H^1( \pi_1(\Delta) , QG_{a,\hhol} ) \ ,
      \end{equation}
      where $QG_{a,\hhol} := \Hol_a\check{\mB} / \alpha_a(G)$.
      %
      %
\item Defining $NB_a$ as the normalizer of $\alpha_a(G)$ in $\Aut B_a$ we get the \emph{normalizer 2--group}
      ${}^2G_{a,\alpha} =$ $( \alpha_a(G) \tto NB_a )$.
      Since $\Hol_a\check{\mB} \subseteq NB_a$, there is an inclusion morphism
      ${}^2G_{a,\hhol} \to {}^2G_{a,\alpha}$
      inducing a canonical map at the level of cohomology sets
      \begin{equation}
      \label{eq.B3.1bb}
      \check{H}^2( \Delta , {}^2G_{a,\hhol} ) \, \to \, \check{H}^2( \Delta , {}^2G_{a,\alpha} ) 
      \ \ , \ \ 
      [\tau] \mapsto [\tau]_\alpha
      \ .
      \end{equation}
\end{itemize}
%
%
We now define a non-abelian cocycle with coefficients in ${}^2G_{a,\hhol}$ describing the "transition maps" of $\check{\mB}$.
To this end we consider a path frame $p_a = \{ p_{ao} : o \to a \}$ (see (\ref{eq.pf})) and,
using (\ref{eq.loops}), we define the loops
\begin{equation}
\label{eq.ellb}
\ell_b : a \to a
\ \ \ , \ \ \ 
\ell_b := p_{a\bo} * b * p_{\bl a}
\ \ \ , \ \ \
b \in \Sigma_1(\Delta)
\ .
\end{equation}
Then we define the holonomies
$\jmath^{\, a}_b  :=  \jmath_{\ell_b} \in$ $\Hol_a\check{\mB}$,
$b \in \Sigma_1(\Delta)$.
To study the behaviour of $\jmath^{\, a}$ over a 2-simplex $c \in \Sigma_2(\Delta)$, we consider the path
$p_{a\partial_{01}c} = b_n * \ldots * b_1$,
$p_{a\partial_{01}c} : a \to \partial_{01}c$,
and note that since $\partial_{00}c = \partial_{01}c$ (see \S \ref{sec.A1}), we have 
$p_{a\partial_{01}c} = p_{a\partial_{00}c}$.
Then we compute, using repeatedly (\ref{eq.B2.1}),
\[
\begin{array}{lcl}
\jmath^{\, a}_\co \circ \jmath^{\, a}_\cz \circ (\jmath^{\, a}_\cl)^{-1} & = &
\jmath_{p_{a\partial_{00}c}} \circ \jmath_\co \circ \jmath_\cz \circ \jmath_{\cl}^{-1} \circ \jmath_{p_{\partial_{01}ca}} \\ & = &
\jmath_{p_{a\partial_{00}c}} \circ \alpha_{\partial_{01}c}(g_c) \circ \jmath_{p_{\partial_{01}ca}} \\ & = &
\jmath_{p_{a\partial_{01}c}} \circ \alpha_{\partial_{01}c}(g_c) \circ \jmath_{p_{\partial_{01}ca}} \\ & = &
\alpha_a( \wa{u}_{b_n} \circ \ldots \circ \wa{u}_{b_1} (g_c) ) \ .
\end{array}
\]
Thus 
$\delta^a_c := \jmath^{\, a}_\co \circ \jmath^{\, a}_\cz \circ (\jmath^{\, a}_\cl)^{-1} \in \alpha_a(G)$, 
for all $c \in \Sigma_2(\Delta)$.
\begin{prop}[Transition maps]
\label{prop.B3.1}
Let $\check{\mB} = (B,\jmath)_\Delta$ be a bundle $q$-gerbe and $a \in \Delta$. Then
$\tau^a = ( \jmath^{\, a} , \delta^a )$
is a twisted connection in $\check{Z}^2(\Delta,{}^2G_{a,\hhol} )$ whose cohomology class is independent of the path frame.
A bundle gerbe $\check{\mB}'$ is isomorphic to $\check{\mB}$ if, and only if,
$\check{\mB}$ and $\check{\mB}'$ have isomorphic standard fibres and
$\check{\mB}'$ defines a cocycle $\tau'^a$ such that $[\tau'^a]_\alpha =$ $[\tau^a]_\alpha \in \check{H}^2( \Delta , {}^2G_{a,\alpha} )$.
\end{prop}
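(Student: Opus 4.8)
\emph{Plan.} I would establish the three assertions in order, the essential content lying in the ``if'' part of the last one; throughout, a bundle $q$-gerbe over a common group $G$ is understood, and all composites use the extension of $\jmath$ from $N_1(\Delta)$ to $\Sigma_1(\Delta)$. That $\delta^a_c\in\alpha_a(G)$ for all $c\in\Sigma_2(\Delta)$ has just been checked, so the first relation of (\ref{eq.B1.01}) for $\tau^a=(\jmath^{\,a},\delta^a)$ holds tautologically (reading $\delta^a_c$ inside $\Hol_a\check{\mB}$ through the inclusion $\alpha_a(G)\hookrightarrow\Hol_a\check{\mB}$, it \emph{is} its definition), and since that inclusion is injective the second relation of (\ref{eq.B1.01}) is then automatic (first remark after (\ref{eq.B1.01})). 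For the twisted-connection property I would note $\ell_{\ovl b}=\ovl{\ell_b}$: from $\partial_0\ovl b=\partial_1 b$ and $\partial_1\ovl b=\partial_0 b$ one gets $\ell_{\ovl b}=p_{a\bl}*\ovl b*p_{\bo a}=\ovl{p_{\bl a}}*\ovl b*\ovl{p_{a\bo}}=\ovl{\ell_b}$, whence $\jmath^{\,a}_{\ovl b}=\jmath_{\ovl{\ell_b}}=\jmath_{\ell_b}^{-1}=(\jmath^{\,a}_b)^{-1}$ by (\ref{eq.tc2}).

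\emph{Independence of the path frame.} Given a second path frame $p'_a$, with loops $\ell'_b:=p'_{a\bo}*b*p'_{\bl a}$ and cocycle $\wt\tau^a=(\wt{\jmath}^{\,a},\wt\delta^a)$, I would set $v_o:=\jmath_{p'_{ao}*p_{oa}}\in\Hol_a\check{\mB}$. Since $\jmath_{p_{ao}}=\jmath_{p_{oa}}^{-1}$ (by (\ref{eq.tc2})), inside $v_\bo\circ\jmath_{\ell_b}\circ v_\bl^{-1}$ the blocks $\jmath_{p_{a\bo}}\circ\jmath_{p_{\bo a}}$ and $\jmath_{p_{a\bl}}\circ\jmath_{p_{\bl a}}$ cancel and leave $\jmath_{p'_{a\bo}}\circ\jmath_b\circ\jmath_{p'_{\bl a}}=\wt{\jmath}^{\,a}_b$; thus $\wt{\jmath}^{\,a}_b=v_\bo\,\jmath^{\,a}_b\,v_\bl^{-1}$, the first of (\ref{eq.B1.01a}) with $h\equiv\mathbf 1$. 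Substituting this into $\wt\delta^a_c=\wt{\jmath}^{\,a}_\co\circ\wt{\jmath}^{\,a}_\cz\circ(\wt{\jmath}^{\,a}_\cl)^{-1}$ and using the face identities for $c$, the vertex factors telescope and give $\wt\delta^a_c=\wa{v}_{\partial_{01}c}(\delta^a_c)$, the second of (\ref{eq.B1.01a}). Hence $(v,\mathbf 1)\in(\tau^a,\wt\tau^a)$ and $[\tau^a]=[\wt\tau^a]$ in $\check{H}^2(\Delta,{}^2G_{a,\hhol})$.

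\emph{The characterization, easy direction.} ``Isomorphic standard fibres'' means there is a $G$-isomorphism $\phi:B_a\to B'_a$, which induces an isomorphism $\ad\phi:{}^2G_{a,\alpha}\to{}^2G'_{a,\alpha}$ (carrying $\alpha_a(G)\to\alpha'_a(G)$, $NB_a\to NB'_a$) along which $[\tau^a]_\alpha$ and $[\tau'^a]_\alpha$ are compared. For ($\Rightarrow$): given an isomorphism $\eta_\beta:\check{\mB}\to\check{\mB}'$, put $\phi:=\eta_a$; extending (\ref{eq.B2.1a}) to $\Sigma_1(\Delta)$ via $\jmath_b=\jmath_{|b|\bo}^{-1}\circ\jmath_{|b|\bl}$ and absorbing the resulting $\alpha'$-factors yields $\eta_\bo\circ\jmath_b\circ\eta_\bl^{-1}=\gamma_b\circ\jmath'_b$ with $\gamma_b\in\alpha'_\bo(G)$; composing along the loops $\ell_b$ shows that $\ad\eta_a$ restricts to an isomorphism $\Hol_a\check{\mB}\to\Hol_a\check{\mB}'$ taking $\tau^a$ to a cocycle differing from $\tau'^a$ by a coboundary $(\mathbf 1,h)$ with $h$ valued in $\alpha'_a(G)$, so $[\tau^a]_\alpha=[\tau'^a]_\alpha$ in $\check{H}^2(\Delta,{}^2G'_{a,\alpha})$.

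\emph{The hard direction.} For ($\Leftarrow$) I would use a common path frame $p_a$ for both gerbes (legitimate by the previous step) and pick $(v,h)\in\big((\ad\phi)_*\tau^a,\tau'^a\big)$ in $\check{Z}^2(\Delta,{}^2G'_{a,\alpha})$, with $v:\Sigma_0(\Delta)\to NB'_a$, $h:\Sigma_1(\Delta)\to\alpha'_a(G)$, and set, for $o\in\Delta$ and $b\in N_1(\Delta)$,
\[
\eta_o:=\jmath'_{p_{oa}}\circ v_o\circ\phi\circ\jmath_{p_{ao}}:B_o\to B'_o\ ,\qquad
\beta_b:=\jmath'_{p_{\bo a}}\circ h_b\circ\jmath'_{p_{a\bo}}\ .
\]
Then $\beta_b\in\alpha'_\bo(G)$ since $\jmath'_{p_{\bo a}}$ normalizes the $\alpha'$-action (second of (\ref{eq.B2.1}) along $p_{\bo a}$), and a direct computation --- using $\jmath_{p_{a\bo}}\circ\jmath_{p_{\bo a}}=\mathrm{id}$, the identity $\jmath_b=\jmath_{p_{\bo a}}\circ\jmath^{\,a}_b\circ\jmath_{p_{a\bl}}$, the analogue $\jmath'_{\ell_b}=\jmath'_{p_{a\bo}}\circ\jmath'_b\circ\jmath'_{p_{\bl a}}$, and the first of (\ref{eq.B1.01a}) --- rewrites $\eta_\bo\circ\jmath_b$ as $\beta_b^{-1}\circ\jmath'_b\circ\eta_\bl$, which is the first of (\ref{eq.B2.1a}). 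The real obstacle is the second of (\ref{eq.B2.1a}), that each $\eta_o$ be $G$-equivariant: unwinding it by (\ref{eq.B2.1}) reduces it to the identity $\ad v_o\circ\alpha'_a\circ\wa{u}_{p_{ao}}=\alpha'_a\circ\wa{u}'_{p_{ao}}$, which I would obtain by composing, along $p_a$, the reduction modulo $\alpha'_a(G)$ of the cohomology relation for $(v,h)$, using that these reductions are honest $1$-cocycles in $Z^1(\Delta,NB'_a/\alpha'_a(G))$ that compose coherently along paths (\lemref{lem.B1.1}). Granting this, each $\eta_o$ is invertible and $\eta_\beta$ satisfies (\ref{eq.B2.1a}), so $\eta_\beta:\check{\mB}\to\check{\mB}'$ is the required isomorphism; everything but the $G$-equivariance step is bookkeeping with (\ref{eq.B2.1}), (\ref{eq.tc1}), (\ref{eq.tc2}) and (\ref{eq.B1.01a}).
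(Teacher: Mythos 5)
Your argument follows the paper's proof step for step: the twisted-connection property via $\ell_{\ovl b}=\ovl{\ell_b}$ and (\ref{eq.tc2}); path-frame independence via $v_o=\jmath_{p'_{ao}*p_{oa}}$ and the coboundary $(v,{\bf 1})$; the forward implication by pushing the $\beta_b$'s along the loops $\ell_b$ to produce a pair $(\eta_a,h)\in(\tau^a,\tau'^a)$; and the converse by conjugating a given $(v,h)$ with the path-frame holonomies to define $\eta_o$ and $\beta_b$. All of this is sound, and your use of $v_o$ (rather than the paper's $v_a$) in the definition of $\eta_o$ is what makes the first relation of (\ref{eq.B2.1a}) come out cleanly.

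The gap is exactly at the step you flag as the real obstacle. The identity you need, $\ad v_o\circ\alpha'_a\circ\wa{u}_{p_{ao}}=\alpha'_a\circ\wa{u}'_{p_{ao}}$, is an equality of maps \emph{into} $\alpha'_a(G)$, whereas the tool you invoke --- the reduction of the cohomology relation modulo $\alpha'_a(G)$, composed along $p_{ao}$ --- lives in the quotient $NB'_a/\alpha'_a(G)$. After the telescoping (both $\prod_i[\jmath^{\,a}_{b_i}]$ and $\prod_i[\jmath'^{\,a}_{b_i}]$ reduce to the identity class along $p_{ao}$), that composition yields only $[v_o]=[v_a]$ in the quotient, which pins down $\ad v_o$ on $\alpha'_a(G)$ merely up to inner automorphisms of $\alpha'_a(G)$. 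That is strictly weaker than what the second relation of (\ref{eq.B2.1a}) demands: already at $o=a$ your identity says that $v_a$ must \emph{centralize} $\alpha'_a(G)$, and a generic pair $(v,h)\in(\tau^a,\tau'^a)$ only guarantees $v_a\in NB'_a$. Even the natural repair --- correcting each $\eta_o$ by an element of $\alpha'_o(G)$ and absorbing the error into $\beta$ --- would require the automorphism of $G$ induced by $\ad v_a$ to be inner, which again does not follow from the cocycle equivalence alone. To be fair, the paper itself dispatches this converse with ``a trivial computation'' and never checks equivariance either, so you have correctly isolated the one genuinely delicate point of the proposition; but the bridge you propose does not carry you across it. You would need to show that the class $[\tau^a]_\alpha$ can always be realized by a pair $(v,h)$ whose $v$ intertwines the $G$-actions exactly, or else modify the construction of $\eta$.
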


\begin{proof}
By construction $\tau^a$ is a ${}^2G_{a,\hhol}$-cocycle.
%
%
Moreover (\ref{eq.tc2}) implies $\jmath^{\, a}_{\ovl{b}} = \jmath^{\, a , -1}_b$, $b \in \Sigma_1(\Delta)$, 
thus $\tau^a$ is a twisted connection.
Let now $p'_a$ denote a path frame. Then we have a new cocycle 
$\tau'^a = ( {\jmath \, }'^{\, a} , \delta'^a ) \in \check{Z}^2(\Delta,{}^2G_{a,\hhol} )$,
where ${\jmath \, }'^a_b := \jmath_{p'_{a\bo}} \circ \jmath_b \circ \jmath_{p'_{\bl a}}$.
We define the holonomies
$v_o := \jmath_{p'_{ao}} \circ \jmath_{oa} \in \Hol_a\check{\mB}$, $o \in \Delta$;
a simple check shows that
${\jmath \, }'^a_b  =  v_\bo \circ \jmath^{\, a}_b \circ v_\bl^{-1}$ 
and
$\delta'^a_c  =  \wa{v}_{\partial_{01}c}(\delta^a_c)$,
for all $b \in \Sigma_1(\Delta)$ and $c \in \Sigma_1(\Delta)$.
This implies $(v,{\bf 1}) \in ( \tau^a , \tau'^a )$ and $[\tau^a]$ is independent of the path frame.

Let now ${}^2G' = (G \tto N')$ be a 2--group, $q' = (u',g') \in \check{Z}^2(\Delta,{}^2G')$ and 
$\check{\mB}' = (\mB',{\jmath \, }')_\Delta$ a $q'$-bundle gerbe with fibres isomorphic to those of $\check{\mB}$.
Without loss of generality we assume that $B'_o = B_o$ for all $o \in \Delta$, so the normalizer 2--groups are the same,
${}^2G_{a,\alpha} = {}^2G_{a,\alpha' \, }$.
Let $\eta_\beta : \check{\mB} \to \check{\mB}'$ be an isomorphism. Then
${\jmath \, }'_b  =  \beta_b \circ \eta_\bo \circ \jmath_b \circ \eta_\bl^{-1}$ for all $b \in \Sigma_1(\Delta)$,
and the map
\[
\lambda_b(\beta) \ := \ 
\eta_\bo \circ \jmath_b \circ \eta_\bl \circ \beta \circ \eta_\bl^{-1} \circ \jmath_b^{-1} \circ \eta_\bo^{-1}
\ \ , \ \ 
\beta \in \alpha_\bl(G) \ ,
\]
defines an isomorphism 
$\lambda_b : \alpha_\bl(G) \to \alpha_\bo(G)$
because of (\ref{eq.B2.1}) and (\ref{eq.B2.1a}).
Iterating over the loop (\ref{eq.ellb}) $\ell_b = b_n * \ldots * b_1$ we obtain
\[
{\jmath \, }'^a_b 
\ = \ 
h_b \circ \eta_a \circ \jmath^{\, a}_b \circ \eta_a^{-1}
\ \ \ , \ \ \
h_b := \beta_{b_n} \lambda_{b_{n-1}}( \beta_{b_{n-1}} \lambda_{b_{n-2}}(\beta_{b_{n-2}}  \ldots  \beta_{b_1})) \in \alpha_a(G)
\, . 
\]
By the identification $B_a = B'_a$ and (\ref{eq.B2.1a}) we have $\eta_a \in NB_a$,
thus $(\eta_a,h) \in ( \tau^a , \tau'^a )$ as desired.
On the converse, let the twisted connection $\tau'^a = ({\jmath \, }'^a,\delta'^a)$ defined by $\check{\mB}'$ be equivalent to $\tau^a$.
Then there is $(v,h) \in (\tau^a,\tau'^a)$ such that, in particular, 
${\jmath \, }'^a_b \, = \, h_b \circ v_\bo \circ \jmath^{\, a}_b \circ v_\bl^{-1}$ for all $b \in \Sigma_1(\Delta)$.
For any $o \in \Delta$ and $b \in \Sigma_1(\Delta)$ we set
\[
\eta_o \, := \, \jmath_{p_{oa}} \circ v_a \circ \jmath_{p_{ao}}
\ \ \ , \ \ \ 
\beta_b \, := \, \jmath_{p_{\bo a}} \circ h_b \circ \jmath_{p_{a\bl}} \ ,
\]
and a trivial computation shows that $\eta_\beta : \check{\mB} \to \check{\mB}'$ is an isomorphism. 
\end{proof}
%

\begin{cor}
\label{cor.B3.1}
Let $\mA = (A,\imath)_\Delta$ be the fixed-point bundle of the $q$-gerbe $\check{\mB} = (B,\jmath)_\Delta$.
Then $\mA$ defines the cocycle $\imath^a \in Z^1(\Delta,QG_{a,\hhol})$, where
$\imath^a_b := \jmath^{\, a}_b {\mathrm{mod}}\alpha_a(G)$, $b \in \Sigma_1(\Delta)$.
\end{cor}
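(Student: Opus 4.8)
The plan is to deduce the statement directly from Proposition~\ref{prop.B3.1}. First I would assemble the facts already at hand. Because $\check{\mB}$ is a bundle $q$-gerbe, every $\jmath_b$ with $b\in\Sigma_1(\Delta)$ is invertible, $\jmath_{\ovl b}=\jmath_b^{-1}$ by (\ref{eq.tc1}), and the second relation in (\ref{eq.B2.1}) applied to $b$ and to $\ovl b$ shows that $\imath_b:=\jmath_b|_{A_\bl}$ is a bijection of $A_\bl$ onto $A_\bo$ with inverse $\jmath_{\ovl b}|_{A_\bo}$; hence the fixed-point precosheaf $\mA=(A,\imath)_\Delta$ of \S\ref{sec.B2} is indeed a bundle. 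Next, for the loops $\ell_b:a\to a$ of (\ref{eq.ellb}) one has $\jmath^{\, a}_b=\jmath_{\ell_b}\in\Hol_a\check{\mB}$, and by (\ref{eq.B3.1e}) the subgroup $\alpha_a(G)$ is normal in $\Hol_a\check{\mB}$, so $QG_{a,\hhol}=\Hol_a\check{\mB}/\alpha_a(G)$ is a bona fide group and $\imath^a_b:=\jmath^{\, a}_b\bmod\alpha_a(G)$ is a well-defined element of $QG_{a,\hhol}$ for each $b\in\Sigma_1(\Delta)$.

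The core step is then immediate. By Proposition~\ref{prop.B3.1}, $\tau^a=(\jmath^{\, a},\delta^a)$ is a ${}^2G_{a,\hhol}$-cocycle, and in this situation the map $\unl{\,\cdot\,}:\alpha_a(G)\to\Hol_a\check{\mB}$ is nothing but the inclusion; thus the first relation in (\ref{eq.B1.01}) reads $\jmath^{\, a}_\co\circ\jmath^{\, a}_\cz\circ(\jmath^{\, a}_\cl)^{-1}=\delta^a_c\in\alpha_a(G)$ for every $c\in\Sigma_2(\Delta)$. Passing to the quotient $QG_{a,\hhol}$, the image of $\delta^a_c$ is the identity, so $\imath^a_\co\,\imath^a_\cz\,(\imath^a_\cl)^{-1}=1$, i.e. $\imath^a_\cl=\imath^a_\co\,\imath^a_\cz$ for all $c\in\Sigma_2(\Delta)$. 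This is exactly the $1$-cocycle relation (\ref{eq.A4}) for a $QG_{a,\hhol}$-valued $1$-cochain, whence $\imath^a\in Z^1(\Delta,QG_{a,\hhol})$.

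To make precise the assertion that this cocycle is carried by $\mA$, I would finally observe that $\alpha_a(G)$ fixes $A_a$ pointwise and, by the first bullet of \S\ref{sec.B2} iterated along a loop, every $\jmath_p$ with $p:a\to a$ maps $A_a$ into $A_a$; hence restriction to $A_a$ defines a homomorphism $\Hol_a\check{\mB}\to\Aut A_a$ that annihilates $\alpha_a(G)$ and therefore factors through a homomorphism $QG_{a,\hhol}\to\Aut A_a$ carrying $\imath^a_b$ to $\jmath^{\, a}_b|_{A_a}=\imath_{\ell_b}$, the parallel transport of $\mA$ around $\ell_b$. So $\imath^a$ is a distinguished lift to $QG_{a,\hhol}$ of the holonomy $1$-cocycle of the bundle $\mA$, and, just as in the proof of Proposition~\ref{prop.B3.1}, replacing the path frame changes $\imath^a$ only within its cohomology class.

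I do not expect a genuine obstacle: the corollary is a formal consequence of Proposition~\ref{prop.B3.1}. The only points deserving attention are that $\alpha_a(G)$ is normal in $\Hol_a\check{\mB}$, so that $QG_{a,\hhol}$ is an honest quotient group (already recorded in \S\ref{sec.B3}), and that quotienting out the $G$-valued term $\delta^a$ of the twisted connection $\tau^a$ converts its non-abelian $2$-cocycle identity into the ordinary $QG_{a,\hhol}$-valued $1$-cocycle identity.
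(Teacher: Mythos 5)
Your argument is correct and follows the same route as the paper: the paper's (one-line) proof likewise rests on the observations that $\delta^a_c\in\alpha_a(G)$ acts as the identity on $A_a$ and that $\jmath^{\,a}_b|A_a$ realizes $\imath^a_b$, so that quotienting by $\alpha_a(G)$ turns the twisted $2$-cocycle identity for $\tau^a$ into the ordinary $1$-cocycle relation. Your write-up merely spells out the well-definedness of $QG_{a,\hhol}$ and the factorization of the restriction map, which the paper leaves implicit.
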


\begin{proof}
It suffices to note that $\jmath^{\, a}_b | A_a$ equals $\imath^a_b$, whilst $\delta^a_c$ is the identity of $A_a$.
\end{proof}

As an application of the previous results we assign a \emph{twisted holonomy} to the bundle gerbe 
$\check{\mB} = (B,\jmath)_\Delta$
by defining the cohomology class
\begin{equation}
\label{eq.B3.3}
[\bar{\tau}^a] \, = \, [\bar{\jmath}^{\, a},\gamma^a] \, \in \check{H}^2( \pi_1(\Delta) , {}^2G_{a,\hhol} )
\end{equation}
as the one associated to $[\tau^a] \in \check{H}^2(\Delta,{}^2G_{a,\hhol} )$
in the sense of Proposition \ref{prop.B1.1} (note that $\alpha_a(G) \subseteq \Hol_a\check{\mB}$,
thus in the notation of Proposition \ref{prop.B1.1} we have ${}^2G_{a,\hhol} = {}^2G_{a,\hhol,i}$).
%

\paragraph{Associated gerbes.}
Let $q = (u,g) \in$ $\check{U}^2(\Delta,{}^2G)$ be a twisted connection and $\check{\mB} = (B,\jmath)_\Delta$ a bundle $q$-gerbe.
We say that $\check{\mB}$ \emph{is associated to} $q$ whenever:
\begin{enumerate}
\item the action $\alpha_a : G \to \Aut B_a$ extends to a 2--group morphism 
      $\alpha_a : {}^2G \to {}^2G_{a,\alpha}$;
\item the twisted connection $\tau^a$ defined by $\check{\mB}$ is equivalent to $\alpha_{a,*}q$ in $\check{Z}^2(\Delta,{}^2G_{a,\alpha} )$,
      where $\alpha_{a,*}q := ( \alpha_a \circ u , \alpha_a \circ g )$ as in \S \ref{sec.B1}.
\end{enumerate}
In essence the previous definition says that $\check{\mB}$ is determined by $q$ and the 2--action $\alpha_a$.
For, let us define
$\check{\mF} = (F,\imath)_\Delta$, 
$F_o \equiv B_a$,
$\imath_b := \alpha_a(u_b)$,
for all
$o \in \Delta$, $b \in \Sigma_1(\Delta)$.
It is easily seen that $\check{\mF}$ is a $q$-gerbe, and it is constructed only using $q$ and $\alpha_a$.
Its transition maps can be easily computed,
\[
\imath^a_b \, = \, v_\bo \circ \alpha_a(u_b) \circ v_\bl^{-1}
\ \ \ \ , \ \ \ \
\xi^a_c \, := \, \imath^a_\co \circ \imath^a_\cz \circ (\imath^a_\cl)^{-1} \, = \, 
v_{ \partial_{01}c } \circ \alpha_a(\unl{g_c}) \circ v_{ \partial_{01}c }^{-1} \ ,
\]
where $v_o := \alpha_a(u_{p_{ao}})$, see (\ref{eq.pf}) and (\ref{def.up}).
The above relations say that the pair $(v,1)$ makes $(\imath^a,\xi^a)$ cohomologous to $\alpha_{a,*}q$,
thus $\check{\mF}$ is isomorphic to $\check{\mB}$ by Prop.\ref{prop.B3.1}.

\begin{ex}[Hilbert gerbes]
\label{ex.Hilb}
Let ${}^2G_\bullet = (G_\bullet \tto N_\bullet)$ be a 2--group and $q = (u,g) \in \check{Z}^2(\Delta,{}^2G_\bullet)$.
By definition, $q$-Hilbert bundle gerbes are given by pairs
$\check{\mH} = (H,U)_\Delta$,
where $U_b : H_\bl \to H_\bo$, $b \in \Sigma_1(\Delta)$, are unitary operators such that
\begin{equation}
\label{eq.B2.2}
\left\{
\begin{array}{ll}
\pi_{\partial_{01}c}(g_c) \circ U_\cl \ = \ U_\co \circ U_\cz \ \ , \ \ c \in \Sigma_2(\Delta) \ ,
\\ \\
U_b \circ \pi_\bl(g) \ = \ \pi_\bo(\wa{u}_b(g)) \circ U_b \ \ , \ \ g \in G_\bullet \, , \, b \in \Sigma_1(\Delta) \ ,
\end{array}
\right.
\end{equation}
and $\pi_o : G_\bullet \to U(H_o)$, $o \in \Delta$, are unitary representations.
%
%
Any $q$-Hilbert bundle gerbe $\check{\mH}$ defines a unitary $q$-group gerbe, in the following way.
We set $U(H) := \{ U(H_o) \}_{o \in \Delta}$ and consider the $G_\bullet$-actions
$\{ \ad \pi_o(g) \}(y) :=$ $\pi_o(g) y \pi_o(g)^*$, $g \in G_\bullet$, $y \in U(H_o)$;
then we consider the isomorphisms
$\ad U_b(y) := U_b y U_b^*$, $y \in U(H_\bl)$, $b \in \Sigma_1(\Delta)$.
Using (\ref{eq.B2.2}) we obtain that $\mU(\check{\mH}) := ( U(H) , \ad U )_\Delta$ is a $q$-group gerbe,
that we call the \emph{unitary gerbe of $\check{\mH}$}.
Let now $\check{\mG} = (G,\jmath)_\Delta$ denote the $q$-group gerbe defined by a standard $q$-gerbe Ex.\ref{ex.Gadj}.
Then the second of (\ref{eq.B2.2}) says that we have the gerbe morphism 
\[
\ad \pi : \check{\mG} \to \mU(\check{\mH}) \ .
\]
The category of $q$-Hilbert gerbes has naturally defined tensor product and conjugates, induced by those of $\wa{G}_\bullet$.
The identity of the tensor product is the trivial gerbe $\mC = ( C , \imath )_\Delta$
where $C_o \equiv \bC$ carry the trivial $G_\bullet$-action and $\imath_b \equiv 1$.
We have $(\mC,\mC) \simeq \bC$ when $\Delta$ is connected \cite[Prop.5.5]{RRV08}, a property that we assume in the present paper.
Thus in this case the category of $q$-Hilbert gerbes a DR-category.

\noindent Let now $q$ be a twisted connection , $a \in \Delta$ and $\check{\mH} = (H,U)_\Delta$ a Hilbert gerbe associated to $q$.
Then, given the normalizer $NH^\pi_a$ of $\pi_a(G_\bullet)$ in $U(H_a)$,
%
%
by definition the representation $\pi_a$ extends to a morphism $\pi_a : {}^2G_\bullet \to {}^2G_{a,\pi}$, where
${}^2G_{a,\pi} = ( \pi_a(G_\bullet) \tto NH^\pi_a )$.
Up to equivalence, $\check{\mH}$ has transition maps $\pi_{a,*}q = ( \pi_a \circ u , \pi_a \circ g )$.
We denote the category of Hibert gerbes associated to $q$ by 
\[
\Hilb_q(\Delta) \ \ \ , \ \ \ q \in \check{U}^2(\Delta,{}^2G_\bullet) \ .
\]
The above remarks imply that $\Hilb_q(\Delta)$ is a DR-category (having assumed $\Delta$ connected). 
\end{ex}

\paragraph{Gerbes over posets and gerbes over spaces.}
Let $\Delta$ be a good base for a Hausdorff paracompact space $X$ and $Q$ a topological group.
In \cite[\S 8]{RRV08} it is explained how to get $Q$-cocycles in the ordinary geometric sense,
that form a set $Z^1(X,Q)$,
%
%
from cocycles in $Z^1(\Delta,Q)$. The so-obtained cocycles are locally constant 
(that is, they are given by maps that are constant on the connected components of their domains)
and yield transition maps for eventually associated flat bundles over $X$.
This defines a map 
\begin{equation}
\label{eq.comp.01}
{\mathrm{T}}^1 : Z^1(\Delta,Q) \, \to \, Z^1(X,Q) \ .
\end{equation}
Applying the cohomology relation we get a map
${\mathrm{T}}^1_* : H^1(\Delta,Q) \, \to \, H^1(X,Q)$
that in general is neither injective nor surjective:
this is the cohomological counterpart of  well-known properties of flat bundles over $X$
\cite[\S I.2]{Kob}, \cite[\S 2.2 and \S 4.3]{RV14a}, \cite[\S A]{RV14b}.

Let now ${}^2G = (G \tto N)$ be a 2--group where $G$, $N$ and $N/G$ are topological groups.
We make the simplifying hypothesis that the map $i : G \to N$ is injective, and identify $G$ with $i(G) \subseteq N$.
Then there is a map
\begin{equation}
\label{eq.comp.02}
S : Z^1(X,N/G) \ \to \ \check{Z}^2(X,{}^2G) \ ,
\end{equation}
where $\check{Z}^2(X,{}^2G)$ is the set of non-abelian cocycles over $X$ with coefficients in ${}^2G$.
The map (\ref{eq.comp.02}) induces an isomorphism at the level of the corresponding cohomologies,
see \cite[Lemma 14]{Vas09} or \cite[Lemma 2]{BS} (warning: in \cite{BS} $\check{H}^2(X,{}^2G)$ is written $\check{H}^1(X,{}^2G)$).
Combining (\ref{eq.comp.02}) with the map $\mu_2$ of Lemma \ref{lem.B1.1} we get the map
\begin{equation}
\label{eq.comp.03}
\check{\mathrm{T}}^2 : \check{Z}^2(\Delta,{}^2G) \to \check{Z}^2(X,{}^2G)
\ \ , \ \ 
\check{\mathrm{T}}^2 := S \circ {\mathrm{T}}^1 \circ \mu_2
\ .
\end{equation}
This yields a bridge from gerbes over posets to gerbes in the ordinary sense (\emph{principal ${}^2G$-2-bundles}, \cite[\S 2]{BS}).
It is not difficult to verify that $\check{\mathrm{T}}^2(q)$ is locally constant for all $q \in \check{Z}^2(\Delta,{}^2G)$.
When $X$ is a manifold and $G \simeq \bU(1)$, gerbes over $X$ obtained by the map $\check{\mathrm{T}}^2$ are expected to be flat in the sense of \cite{BTW04}.


\begin{ex}[Morita bundle gerbes]
\label{ex.pennig}
With the above notation, let $\alpha : {}^2G \to {}^2\Aut A$ be a Green $C^*$-dynamical system.
Then any $\zeta \in \check{Z}^2(X,{}^2G)$ defines by composition a cocycle $\alpha_*\zeta \in \check{Z}^2(X,{}^2\Aut A)$.
Given $q \in \check{Z}^2(\Delta,{}^2G)$, we get a cocycle
$\check{\mathrm{T}}^2(q) \in \check{Z}^2(X,{}^2G)$,
and, by composition, a cocycle
$\alpha_* \check{\mathrm{T}}^2(q) \in \check{Z}^2(X,{}^2\Aut A)$.
If $A$ is the stabilization of a unital $C^*$-algebra, then $\alpha_* \check{\mathrm{T}}^2(q)$ defines a Morita bundle gerbe $\mA_{\alpha,q}$, \cite[Cor.3.42]{Pen11},
that is expected to be the gerbe analogue of a locally constant (also called \emph{flat}) $C^*$-bundle over $X$ \cite[\S 2.2]{RV14a}, with structure group $G$.
\end{ex}


\section{Twisted $C^*$-dynamical systems and $C^*$-gerbes}
\label{sec.C}

In the present section we focus on $C^*$-algebras and introduce a notion of twisted $C^*$-dynamical system, generalizing the classical one.
We discuss the representation theory of our twisted $C^*$-dynamical systems and show that they can arise as twisted holonomies of $C^*$-gerbes.
Some of the following results may be proven for generic gerbes, but we prefer to discuss them in the case of $C^*$-algebras,
that is the one in which we are mostly interested.

\subsection{How $C^*$-gerbes arise}
\label{sec.C1}

To better illustrate our motivations we present some situations where $C^*$-gerbes arise.

A first scenario is given by the "reconstruction problem", that may be summarized by the question 
whether given a $C^*$-bundle of fixed point algebras, there exists a $C^*$-bundle having it as a fixed-point bundle.
In particular, the case of bundles of fixed-point algebras of the Cuntz algebra nicely illustrate the general results.

The second scenario is given by quantum Dirac fields interacting with classical electromagnetic potentials
defining vanishing electromagnetic fields (closed de Rham 1--forms).

\paragraph{Fixed-point $C^*$-bundles.}
As we already seen, bundle $C^*$-gerbes $\check{\mF}$ define fixed-point bundles $\mA$.
We now make the reverse path, starting from $\mA$ and reconstructing $\check{\mF}$.

Let $\mA = (A,\jmath)_\Delta$ be a $C^*$-bundle. Without loss of generality we assume that
there is a fixed standard fibre $A_\bullet \equiv A_o$ for all $o \in \Delta$, Remark \ref{rem.standardfibre}.
Now, let us suppose that there is a $C^*$-algebra $F_\bullet$ with an automorphism group $G \subseteq \Aut F_\bullet$
such that
{\footnote{
The idea is that $G = \alpha(G')$ for some action $\alpha : G' \to \Aut F_\bullet$, 
anyway in the context of the present paragraph $G$ is the group of interest.
}}
$A_\bullet = F_\bullet^G$; in this case we say that $\mA$ is a \emph{bundle of fixed-point algebras}.
Note that we have the normalizer 2--group
${}^2G = ( G \tto NF_\bullet )$,
and $QG := NF_\bullet/G$ acts on $A_\bullet$ by *-automorphisms.

Now, we ask whether there is a bundle structure
$\imath_b : F_\bullet \to F_\bullet$, $b \in \Sigma_1(\Delta)$,
making $\mF = (F,\imath)_\Delta$ a $C^*$-bundle (with $F_o \equiv F_\bullet$ for all $o$), such that 
\begin{equation}
\label{eq.C1.1}
\imath_b \circ g \, = \, \nu_b(g) \circ \imath_b
\ \ \ , \ \ \
\imath_b | A_\bullet \, = \, \jmath_b 
\ \ \ , \ \ \
\forall b \in \Sigma_1(\Delta) \, , \, g \in G \, ,
\end{equation}
for some $\nu \in Z^1(\Delta,\Aut G)$.
The second of (\ref{eq.C1.1}) says that there is an inclusion morphism $\mA \to \mF$.
When both the conditions in (\ref{eq.C1.1}) hold, we say that $\mA$ is the \emph{fixed-point bundle of} $\mF$ and we write $\mA = \mF^\alpha$.

Let us now fix $a \in \Delta$.
Then Cor.\ref{cor.B3.1} gives the condition for $\mA$ being the fixed-point bundle of a bundle gerbe:
in particular, since $QG_{a,\hhol} \subseteq QG$, a necessary condition is
\begin{equation}
\label{eq.C1.1a}
\jmath_p \in QG \ \ \ , \ \ \ \forall p : a \to a \ .
\end{equation}
\begin{prop}
\label{prop.C1.1}
Let $\mA$ be a bundle of fixed-point $C^*$-algebras fulfilling (\ref{eq.C1.1a}).
Then there are a twisted connection $\tau = (\upsilon,\delta) \in \check{U}^2(\Delta,{}^2G)$ (unique up to equivalence)
with associated $C^*$-gerbe $\check{\mF}_\mA$ (unique up to isomorphism)
such that $\mA$ is isomorphic to the fixed-point bundle of $\check{\mF}_\mA$.
\end{prop}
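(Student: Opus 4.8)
The plan is to extract from $\mA$ its holonomy $1$-cocycle, which by (\ref{eq.C1.1a}) takes values in $QG:=NF_\bullet/G$, to lift it to a twisted connection in $\check{U}^2(\Delta,{}^2G)$ using that the inclusion $i\colon G\hookrightarrow NF_\bullet$ is injective, and to take for $\check{\mF}_\mA$ the bundle gerbe canonically associated to that twisted connection. Concretely, first I would fix the standard fibre, so $A_o\equiv A_\bullet$ for all $o$ (Remark~\ref{rem.standardfibre}), extend $\jmath$ from $N_1(\Delta)$ to $\Sigma_1(\Delta)$, pick $a\in\Delta$ and a path frame $p_a$, and form the loops $\ell_b$ of (\ref{eq.ellb}) together with the holonomies $\jmath^{\,a}_b:=\jmath_{\ell_b}$. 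Since $\mA$ carries no twisting, the computation in the proof of Proposition~\ref{prop.B3.1} specialised to $g_c\equiv1$ gives $\delta^a_c=\mathrm{id}$, so $\jmath^{\,a}$ is a genuine $1$-cocycle on $\Sigma_1(\Delta)$, valued in $\Hol_a\mA$, and (as in \S\ref{sec.A3}) it represents $\mA$ up to isomorphism because it has the same holonomy homomorphism $\pi_1^a(\Delta)\to\Aut A_\bullet$. Hypothesis (\ref{eq.C1.1a}) says exactly that every $\jmath_p$, $p\colon a\to a$, lies in $QG$ acting on $A_\bullet$, hence $\Hol_a\mA\subseteq QG$ and $w:=\jmath^{\,a}\in Z^1(\Delta,QG)$.

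Because $i$ is injective, Lemma~\ref{lem.B1.1} applies: choosing a unital section $s\colon QG\to NF_\bullet$ as in Remark~\ref{rem.section}, set $\delta(w)_c:=s(w_\co)\,s(w_\cz)\,s(w_\cl)^{-1}\in G$ and let $\tau=(\upsilon,\delta)\in\check{U}^2(\Delta,{}^2G)$ be the twisted connection to which $\nu(w):=(s\circ w,\delta(w))$ is rendered equivalent in the proof of that lemma; there $\upsilon_b=h_b\,s(w_b)$ with $h_b\in G$, so $\mu_2(\tau)_b=\upsilon_{b,G}=w_b$, i.e. $\mu_{2,*}[\tau]=[w]$. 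Now define $\check{\mF}_\mA=(F,\imath)_\Delta$ by $F_o:=F_\bullet$ with $G$-action the inclusion $G\hookrightarrow\Aut F_\bullet$ and $\imath_b:=\upsilon_b$ for $b\in\Sigma_1(\Delta)$. On the normalizer $2$-group ${}^2G=(G\tto NF_\bullet)$ the defining relations (\ref{eq.B2.1}) of a $\tau$-gerbe read $\upsilon_b\circ g=(\upsilon_b g\upsilon_b^{-1})\circ\upsilon_b$ and $\unl{\delta_c}\circ\upsilon_\cl=\upsilon_\co\circ\upsilon_\cz$, which hold by the definition of a twisted connection; thus $\check{\mF}_\mA$ is a $\tau$-bundle gerbe, and it coincides with the gerbe associated to $\tau$ built in the ``associated gerbes'' construction of \S\ref{sec.B3} from $\tau$ and the action $G\hookrightarrow\Aut F_\bullet$. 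Its fixed-point precosheaf has fibres $F_\bullet^G=A_\bullet$ and inclusion maps $\upsilon_b|A_\bullet$; since $\upsilon_b$ normalises $G$, its restriction $\upsilon_b|A_\bullet$ depends only on $\upsilon_{b,G}$ and equals the image of $w_b=\jmath^{\,a}_b$ under the action $QG\to\Aut A_\bullet$, i.e. $\jmath^{\,a}_b$ itself. Hence the fixed-point bundle of $\check{\mF}_\mA$ has transition cocycle $\jmath^{\,a}$ and is isomorphic to $\mA$ (alternatively, the transition-map computation in the same paragraph shows $\check{\mF}_\mA$ has transition maps cohomologous to $\alpha_{a,*}\tau$, and one concludes by Corollary~\ref{cor.B3.1}).

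For uniqueness, suppose $\tau'\in\check{U}^2(\Delta,{}^2G)$ is a twisted connection admitting a bundle gerbe $\check{\mF}'$ (built as above from $\tau'$ and the action $G\hookrightarrow\Aut F_\bullet$) whose fixed-point bundle is isomorphic to $\mA$. That fixed-point bundle has transition cocycle $\mu_2(\tau')\in Z^1(\Delta,QG)$, so $\mu_{2,*}[\tau']=[\mA]_{QG}=[w]=\mu_{2,*}[\tau]$; since $\mu_{2,*}$ is a bijection (Lemma~\ref{lem.B1.1}, $i$ injective), $[\tau']=[\tau]$ in $\check{H}^2(\Delta,{}^2G)$, so $\tau$ is unique up to equivalence. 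Finally equivalent twisted connections give isomorphic bundle gerbes (Proposition~\ref{prop.B3.1}, or the last sentence of the ``associated gerbes'' paragraph), so $\check{\mF}_\mA$ is unique up to isomorphism.

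The step I expect to be the main obstacle is the bookkeeping in the second paragraph identifying $\upsilon_b|A_\bullet$ with the original holonomy $\jmath^{\,a}_b$: it rests on the fact that the $QG$-action on $A_\bullet$ is nothing but restriction of automorphisms of $F_\bullet$, and it is what forces (\ref{eq.C1.1a}) to be read as ``the holonomy lies in the image of $QG$'' — and, for the uniqueness of $\tau$ to hold as stated, that this action be faithful, as is implicit in the notation of (\ref{eq.C1.1a}). The remaining ingredients — existence of the section $s$, that $\jmath^{\,a}$ is a $1$-cocycle representing $\mA$, and the verification of (\ref{eq.B2.1}) for $\check{\mF}_\mA$ — are either already established in \S\ref{sec.B} or one-line checks.
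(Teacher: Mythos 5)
Your proposal is correct and follows essentially the same route as the paper's proof: extract the $QG$-valued holonomy cocycle $\jmath^{\,a}$ from $\mA$, use the injectivity of $i\colon G\to NF_\bullet$ together with Lemma~\ref{lem.B1.1} to lift it (uniquely up to equivalence, since $\mu_{2,*}$ is bijective) to a twisted connection $\tau$, take $\check{\mF}_\mA$ to be the associated gerbe, and conclude via Corollary~\ref{cor.B3.1} and Proposition~\ref{prop.B3.1}. Your version merely spells out in more detail the verification of (\ref{eq.B2.1}) and the identification $\upsilon_b|A_\bullet=\jmath^{\,a}_b$, which the paper compresses into ``reasoning as in Cor.~\ref{cor.B3.1}''.
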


\begin{proof}
By hypothesis $\mA$ has transition maps $\jmath^{\, a}_b \in QG$, $b \in \Sigma_1(\Delta)$,
fulfilling (\ref{eq.A4}) and defining a cohomology class $[\jmath^{\, a}] \in H^1(\Delta,QG)$.
Since the inclusion map $G \to NF_\bullet$ is injective, by Lemma \ref{lem.B1.1} the map $\mu_{2,*}$ of is one-to-one
and we may take $\tau = (\upsilon,\delta)$ in $\mu_2^{-1}(\jmath^{\, a})$ a twisted connection.
We denote the $C^*$-gerbe associated to $\tau$ by $\check{\mF}_\mA$.
By construction 
$\upsilon_b \, {\mathrm{mod}} G =$ $\jmath^{\, a}_b$, $b \in \Sigma_1(\Delta)$, 
thus reasoning as in Cor.\ref{cor.B3.1} we conclude that the fixed-point bundle of $\check{\mF}_\mA$ is isomorphic to $\mA$.
Since $\mu_2$ is one-to-one up to cohomology, 
by Cor.\ref{cor.B3.1} any $C^*$-gerbe $\check{\mF}'$ with fixed-point bundle $\mA$ defines a cocycle cohomologous to $\tau$,
thus by Prop.\ref{prop.B3.1} we have $\check{\mF}' \simeq \check{\mF}_\mA$.
We conclude that $\check{\mF}_\mA$ is unique up to isomorphism.
\end{proof}

\begin{cor}
\label{cor.C1.1}
A $C^*$-bundle $\mF' = (F,\imath')_\Delta$ fulfils $\mA = \mF'^\alpha$ if, and only if, 
$\mF'$ is isomorphic to $\check{\mF}_\mA$ as a gerbe. 
%
%
\end{cor}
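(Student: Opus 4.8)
The plan is to read this off Proposition~\ref{prop.C1.1}. The key observation — and the only point needing care — is that $\mA = \mF'^\alpha$ holds precisely when $\mF'$, equipped with the fixed action of $G$ on each fibre $F_\bullet$, is a bundle $C^*$-gerbe for the normalizer $2$-group ${}^2G = (G \tto NF_\bullet)$ whose fixed-point bundle is $\mA$. Granting this, the forward implication is exactly the uniqueness argument in the proof of Proposition~\ref{prop.C1.1}, and the backward implication follows because a gerbe isomorphism restricts to an isomorphism of fixed-point precosheaves; the corollary is thus essentially a repackaging of Proposition~\ref{prop.C1.1}.

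For the ``only if'' direction, assume $\mA = \mF'^\alpha$, so that~\eqref{eq.C1.1} holds for some $\nu'\in Z^1(\Delta,\Aut G)$. The first relation in~\eqref{eq.C1.1} gives $\imath'_b\circ g\circ(\imath'_b)^{-1} = \nu'_b(g)\in G$ for every $g\in G$; hence each $\imath'_b$ lies in the normalizer $NF_\bullet$ of $G$ in $\Aut F_\bullet$ and $\nu'_b$ is conjugation by $\imath'_b$. Since $\mF'$ is an honest $C^*$-bundle it satisfies the precosheaf relations, so the $g$-part of its gerbe cocycle is trivial and $\mF'$ is a bundle $C^*$-gerbe for ${}^2G$ whose transition cocycle is a twisted connection of the form $du'$ with $u'\in Z^1(\Delta,NF_\bullet)$; the second relation in~\eqref{eq.C1.1} says its fixed-point bundle is $\mA$. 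Now the closing paragraph of the proof of Proposition~\ref{prop.C1.1} applies verbatim: by Corollary~\ref{cor.B3.1} the cocycle $u'\bmod G$ equals the transition cocycle $\jmath^{a}$ of $\mA$, so $\mu_2(du') = \jmath^{a} = \mu_2(\tau)$; bijectivity of $\mu_{2,*}$ (Lemma~\ref{lem.B1.1}, using injectivity of $G\hookrightarrow NF_\bullet$) then forces $[du'] = [\tau]$ in $\check{H}^2(\Delta,{}^2G)$; and Proposition~\ref{prop.B3.1} — both gerbes having standard fibre $F_\bullet$ — yields $\mF'\simeq\check{\mF}_\mA$ as gerbes.

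For the ``if'' direction, let $\eta_\beta:\check{\mF}_\mA\to\mF'$ be a gerbe isomorphism (so that $\mF'$ is being regarded as a gerbe for ${}^2G$, its structure maps lying in $NF_\bullet$). As recalled in the remarks after~\eqref{eq.B2.1}, a gerbe morphism restricts to a morphism of fixed-point precosheaves: each $\eta_o$ is $G$-equivariant, hence maps the fixed-point algebra of $(\check{\mF}_\mA)_o$ isomorphically onto that of $\mF'_o$, while each factor $\beta_b\in\alpha'_\bo(G)$ acts as the identity on the $G$-fixed subalgebra; so by~\eqref{eq.B2.1a} the family $\{\eta_o|_{A_o}\}$ is an isomorphism from the fixed-point bundle of $\check{\mF}_\mA$ onto that of $\mF'$. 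By Proposition~\ref{prop.C1.1} the former is isomorphic to $\mA$, hence so is the latter; that is, $\mA = \mF'^\alpha$. The only genuinely delicate point in the whole argument is the equivalence stated at the outset, after which Proposition~\ref{prop.C1.1} does all the work.
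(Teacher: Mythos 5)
Your argument is correct and is exactly the intended one: the paper states Corollary~\ref{cor.C1.1} without proof as an immediate consequence of Proposition~\ref{prop.C1.1}, and your two directions (viewing $\mF'$ as a $du'$-gerbe for ${}^2G$ and invoking the uniqueness step of Proposition~\ref{prop.C1.1} via Corollary~\ref{cor.B3.1}, Lemma~\ref{lem.B1.1} and Proposition~\ref{prop.B3.1}; conversely restricting a gerbe isomorphism to the fixed-point precosheaves, where the factors $\beta_b$ act trivially) are precisely the steps the paper leaves implicit. The only caveat is the paper's own looseness about ``$\mA=\mF'^\alpha$'' versus ``the fixed-point bundle of $\mF'$ is isomorphic to $\mA$'', which your proof handles in the same spirit as the surrounding text.
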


\paragraph{The case of Cuntz algebras.} 
Given $d \in \bN$, the Cuntz algebra $\mO_d$ is defined as the universal $C^*$-algebra
generated by isometries $\psi_1 , \ldots$ $\psi_d$ fulfilling the Cuntz relations
$\psi_h^*\psi_k = \delta_{hk}1$, $\sum_k \psi_k \psi_k^* = 1$.
The Cuntz algebra has two remarkable dynamics:
the faithful action $\alpha : G \to \Aut \mO_d$, where $G \subseteq \bU(d)$ is a compact Lie group,
and the canonical *-endomorphism $\sigma : \mO_d \to \mO_d$,
\begin{equation}
\label{eq.Od.dyn}
\alpha_g(\psi_h) \, := \, \sum_k g_{hk}\psi_k  \ \ , \ \ \sigma(t) \, := \, \sum_k \psi_k t \psi_k^* 
\ \ \ , \ \ \
\forall g=(g_{hk}) \in G \, , \, t \in \mO_d \ .
\end{equation}
The fixed-point algebra of $\mO_d$ under the $G$-action is denoted by $\mO_G$.
Since $\alpha_g \circ \sigma = \sigma \circ \alpha_g$ for all $g \in G$, 
the *-endomorphism $\sigma_G := \sigma|\mO_G :$ $\mO_G \to$ $\mO_G$ is well-defined, \cite{DR87}. 
Let $N_\alpha\mO_d$ denote the normalizer of $\alpha(G)$ in $\Aut\mO_d$;
a 2--group of interest is given by ${}^2G_\sigma = ( G \tto N_\sigma G )$, where
\[
N_\sigma G \, := \, \{ \beta \in N_\alpha \mO_d  : \beta \circ \sigma = \sigma \circ \beta  \} \ .
\]
Bundles with fibre $\mO_G$ and holonomy in $N_\sigma G/G$ have been studied in \cite[\S 4]{Vas12}, and two opposite situations have been illustrated:
\begin{enumerate}
\item $G = \bSU(d)$ and $\pi_1(\Delta) = \bZ$: there are bundles $\mA$ with fibres $A_o \simeq \mO_G$
      admitting several, non-isomorphic $C^*$-bundles $\mF$ such that $\mA = \mF^\alpha$.
      By the above results, all these $C^*$-bundles are isomorphic as gerbes.
\item $G \simeq \bU(1)$, $G \subset \bU(d)$, acting by scalar multiplication on $\bC^d$, and $H^2(\pi_1(\Delta),\bT) \neq 0$:
      there are bundles with fibres $A_o \simeq \mO_G$ that do not admit $C^*$-bundles having them as fixed-point bundles.
      Thus we only get the $C^*$-gerbe of Prop.\ref{prop.C1.1}.
\end{enumerate}
As we shall see (\S \ref{sec.D}), any section of a DR-presheaf defines a bundle with fibre $\mO_G$ of the above kind.

\paragraph{$C^*$-gerbes in quantum field theory.}
$C^*$-gerbes carrying actions by arbitrary compact groups have been constructed in \cite[\S 4.4]{VasQFT} by means of Wightman fields,
nevertheless at the present time their physical meaning is questionable, \S \ref{sec.D4}.
Instead, in the following lines we illustrate a construction involving the free Dirac field interacting with background potentials
closed as de Rham 1--forms \cite[\S 4.2]{VasQFT}. 
This situation is analogous to what happens in the Aharonov-Bohm effect,
where charged quantum particles "interact" with a classical potential,
which is closed (\emph{i.e.} with vanishing electromagnetic field) in a suitable region, the one that lies outside an ideally infinite solenoid.

Let $M$ denote a 4--dimensional globally hyperbolic spacetime and $\Delta$ the base of diamonds generating the topology of $M$ \cite{GLRV01,BR08}.
Then the spinor bundle $DM \to M$ is defined, with fibre $\bC^4$. 
We denote the space of smooth, compactly supported sections of $DM$ by $\mS(DM)$.
The free Dirac field on $M$ defines a Hilbert space $H$ and a map
\begin{equation}
\label{eq.Dirac}
\psi : \mS(DM) \to B(H) \ ,
\end{equation}
such that any functional
$s \mapsto \left\langle v , \psi(s) w \right\rangle$, $v,w \in H$,
is continuous under the Schwartz topology.
Given $o \in \Delta$, we consider the weak closure of the *-algebra generated by operators of the type 
$\psi(s)$, ${\mathrm{supp}}(s) \subseteq o$,
and denote it by $F_o$. Any $F_o$ is a Von Neumann algebra, and there are obvious inclusion *-morphisms
$\jmath_{\omega o} : F_o \to F_\omega$, $o \subseteq \omega$,
defining the $C^*$-precosheaf $\mF = (F,\jmath)_\Delta$. We call $\mF$ the \emph{Dirac precosheaf}.
We have a $\bU(1)$-action
\[
\alpha_\zeta(\psi(s)) \, := \, \zeta \psi(s) \, , \, \zeta \in \bU(1) \, , \, s \in \mS(DM) \ ;
\]
since $\alpha_\zeta(F_o) = F_o$ for all $o \in \Delta$, we conclude that $\mF$ a $\bU(1)$-precosheaf.
%
%
%
We denote the fixed-point bundle of $\mF$ by $\mA = (A,\imath)_\Delta$.

Let now $Z_{dR}^1(M)$ denote the space of closed de Rham 1--forms.
Each $\sA \in$ $Z_{dR}^1(M)$ can be interpreted as a vector potential giving rise 
to a trivial electromagnetic field $d\sA = 0$. If $H_{dR}^1(M) \neq 0$ then non-exact potentials of this kind exist.
%
%
%
Using the method of \cite[\S 4.2]{VasQFT}, we can associate to $\sA$ a real cocycle $\lambda \in Z^1(\Delta,\bR)$,
and in particular we write $\lambda_{\omega o} \equiv \lambda_b$ for $o \leq \omega$ and $b = ( \omega , o ; \omega )$.
We define the *-morphisms
\[
\jmath^{\, \lambda}_{\omega o} : F_o \to F_\omega
\ \ \ , \ \ \ 
\jmath^{\, \lambda}_{\omega o} := \alpha_{ \exp i \lambda_{\omega o} }
\ \ \ , \ \ \ 
o \subseteq \omega
\ ,
\]
which yield the $C^*$-precosheaf $\mF^\lambda = (F,\jmath^{\, \lambda})_\Delta$.
%
%
We remark that, since any $\jmath^{\, \lambda}_{\omega o}$ is the identity on elements of $A_o$,
any $\mF^{\, \lambda}$ defines the same fixed-point bundle $\mA$.

Let $\lambda'$ be the 1--cocycle defined by $\sA' \in Z_{dR}^1(M)$.
It is easily verified that if $\lambda$ and $\lambda'$ are not cohomologous then in general $\mF^\lambda$ and $\mF^{\lambda'}$ are not isomorphic.
Nevertheless we have the gerbe isomorphisms
\[
\eta_{\beta} : \mF^{\, \lambda} \to \mF^{\, \lambda'}
\ \ \ , \ \ \
\eta_o(t) := t
\ \ \ , \ \ \ 
\beta_b := \alpha_{ \exp i ( \lambda'_{\omega o} - \lambda_{\omega o} ) }
\ ,
\]
where $t \in \mF^{\, \lambda}_o$, $o \subseteq \omega$ and $b := ( \omega , o , \omega ) \in N_1(\Delta)$.
Thus all the precosheaves $\mF^{\, \lambda}$, $\lambda \in Z^1(\Delta,\bR)$, define a unique "Dirac gerbe" having fixed-point bundle $\mA$.


\subsection{Twisted $C^*$-dynamical systems}
\label{sec.C3}

In the present section we introduce a notion of twisted $C^*$-dynamical system.
The idea is that we have an ordinary $G$-dynamical system, on which a second group $\Pi$
acts by a twisted action which lies in the normalizer of the initial $G$-action.
Twisted $C^*$-dynamical systems in the usual sense \cite{ZM68,BS70,AdL87,PR89} arise when $G$ is the unitary group of the underlying $C^*$-algebra.

Of course the techniques that we use are not new, and are borrowed from the well-established theory of (twisted) $C^*$-dynamical systems \cite{Wil}.
Yet, besides considering a generic group $G$, we introduce a flexible notion of morphism
and discuss the relation between twisted $C^*$-dynamical systems and non-abelian group cohomology, 
in particular by relaxing the properties of the twisting map $\gamma$ in the second of (\ref{eq.B3.4}).

\

Let $\alpha : G \to \Aut A$ be a $C^*$-dynamical system, with $G$ not necessarily locally compact.
As we saw in \S \ref{sec.A0}, we have the normalizer 2--group 
${}^2G_\alpha = (\alpha(G) \tto N_\alpha A)$.
For convenience sometimes we write $\alpha_g \equiv \alpha(g) \in \Aut A$, $g \in G$.
Moreover, let $\Pi$ denote a locally compact group.
\begin{defn}
With the above notation, given Borel maps
$\nu : \Pi \to N_\alpha A \subseteq \Aut A$, $\gamma : \Pi \times \Pi \to G$,
we say that $\mA = (A,\alpha,\nu,\gamma)$ is a \textbf{twisted $C^*$-dynamical system} over $G,\Pi$ whenever 
\begin{equation}
\label{eq.tds}
\nu_1 \, = \, id
\ \ \ , \ \ \
\nu_l \circ \nu_m \, = \, \alpha_{\gamma(l,m)} \circ \nu_{lm}
\ \ \ , \ \ \
\forall l,m \in \Pi \ .
\end{equation}
\end{defn}
Note that if $\alpha_{\gamma(l,m)} = id$ for all $l,m \in \Pi$ then $\nu$ is a group morphism.
When $\nu$ is also continuous it defines an action $\nu : \Pi \to \Aut A$ in the ordinary sense.
The technically convenient requirement of $\nu , \gamma$ being Borel rather that continuous is standard in the theory
of "usual" twisted $C^*$-dynamical systems.

By (\ref{eq.tds}) we have that $\alpha_{\gamma(l,m)}$ fulfils
\begin{equation}
\label{eq.tds'}
\alpha_{\gamma(l,m)} \circ \alpha_{\gamma(lm,n)} \, = \, \wa{\nu}_l(\alpha_{\gamma(m,n)}) \circ \alpha_{\gamma(l,mn)}
\ \ \ , \ \ \ 
\alpha_{\gamma(l,1)} \, = \, \alpha_{\gamma(1,l)} \, = \, id \ ,
\end{equation}
for all $l,m \in \Pi$, where $\wa{\nu}_l(\alpha_g) :=$ $\nu_l \circ \alpha_g \circ \nu_l^{-1} \in$ $\alpha(G)$ for all $l$ and $g$.
Thus $(\nu,\alpha_\gamma) \in$ $\check{Z}^2(\Pi,{}^2G_\alpha)$, where $\alpha_\gamma := \alpha \circ \gamma$.
Note that in our approach it is the map $\alpha_\gamma$, rather than $\gamma$, that plays a role.
The idea is that in general $\gamma$ is an auxiliary tool which may be arbitrarily chosen at the only condition that (\ref{eq.tds}) is fulfilled,
see Remark \ref{rem.gamma} below.
%

\paragraph{Unitary actions: inner, Busby-Smith and spatial systems.}
Any $C^*$-algebra $A$ carries the adjoint action
{\footnote{Recall, \S \ref{sec.A}, that $U(A)$ is by definition the unitary group of $M(A)$.}}
$\ad : U(A) \to \Aut A$.
Here, $\ad$ is continuous with respect to the strict topology on $U(A)$ generated by the seminorms
$p_t(u) := \| ut \| + \| tu \|$, $t \in A$.
Since $N_\ad A = \Aut A$, the normalizer 2--group is       
${}^2U(A)_\ad =$ $( \Inn A \to \Aut A )$, where $\Inn A := \ad(U(A))$.      
The corresponding systems $\mA = (A,\ad,\nu,\gamma)$ are given by maps      
\[      
\nu : \Pi \to \Aut A \ \ \ , \ \ \ \gamma : \Pi \times \Pi \to U(A) \ ,      
\]      
such that       
\begin{equation}      
\label{eq.inn.tds}      
\nu_1 \, = \, id      
\ \ \ , \ \ \      %
\nu_l \circ \nu_m \, = \, \ad_{\gamma(l,m)} \circ \nu_{lm}
\ \ \ , \ \ \      %
\forall l,m \in \Pi \ .
\end{equation}      
That is,      
$(\nu,\ad_\gamma) \in$ $\check{Z}^2(\Pi,{}^2U(A)_\ad)$, $\ad_\gamma := \ad \circ \gamma$,
%
%
%
and in this case we say that $\mA$ is \emph{inner}.      
In particular $\mA$ is inner when $(\nu,\gamma) \in$ $\check{Z}^2(\Pi,{}^2\Aut A)$,       
where the 2--group ${}^2\Aut A$ is defined in \S \ref{sec.A0}:      
this means that a stronger version of (\ref{eq.tds'}) holds,      
\begin{equation}      
\label{eq.tds''}      
\gamma(l,m) \cdot \gamma(lm,n) \, = \, \nu_l(\gamma(m,n)) \cdot \gamma(l,mn)      
\ \ \ , \ \ \       
\gamma(l,1) \, = \, \gamma(1,l) \, = \, 1 \ ,      
\end{equation}      
for all $l,m \in \Pi$.      
In this case we get the usual notion of \emph{Busby-Smith twisted $C^*$-dynamical system} \cite[Def.2.1]{PR89}.

\begin{ex}[Twisted group $C^*$-algebras]
\label{ex.tgC*}
Let $G$ be compact and $(\nu,\gamma) \in \check{Z}^2(\Pi,{}^2\Aut G)$.
Then there is a faithful action $\Aut G \to \Aut C^*G$ and we identify $\Aut G$ with its image in $\Aut C^*G$.
In this way $\mC^*G := ( C^*G , \ad , \nu ,\gamma )$ becomes a twisted $C^*$-dynamical system.
Considering the morphism $\phi : G \to U(C^*G)$ \cite[\S 7.1]{Ped} we get the cocycle 
$(\nu,\phi\circ\gamma) \in \check{Z}^2(\Pi,{}^2\Aut  C^*G)$, 
defining a system of Busby-Smith type.
\end{ex}

\begin{ex}[Weak actions]
Let $\mA = (A,\ad,\nu,\gamma)$ be of Busby-Smith type. Then the triple $(A,\nu,\gamma,1)$,
where $1 \in M(A)$ is the identity, is a weak action in the sense of \cite[\S 3.13]{BMZ13}.
\end{ex}

\begin{ex}[Strict crossed module actions]
Let $n \in \bN$ and $\Pi := \bR^n$ (additive group). 
Given a $C^*$-algebra $A$ and continuous maps
$\nu : \Pi \to \Aut A$, $u : \Lambda^2 \bR^n \to U(A)$,
fulfilling the properties
\begin{equation}
\label{eq.MP}
\nu_0 \, = \, id      
\ \ , \ \
\nu_l \circ \nu_m \, = \, \ad_{u(l \wedge m)} \circ \nu_{l+m}
\ \ , \ \ 
l,m \in \Pi \ ,
\end{equation}
we have that $\mA = (A,\ad,\nu,\gamma)$, with $\gamma(l,m) := u(l \wedge m)$,
is an inner twisted $C^*$-dynamical system.
In \cite[Theorem 4.8]{MP16} it is proved that any pair $(\nu,u)$ fulfilling (\ref{eq.MP})
(and some additional properties) defines a strict action $\beta : {}^2\mH \to {}^2\Aut A$ 
(Green $C^*$-dynamical system),
where the 2--group ${}^2\mH$ is the one defined in \cite[Eq.1]{MP16}.
\end{ex}

A particular case of Busby-Smith system is the following.
Given the $C^*$-algebra $A$ we have the 2--group ${}^1U(A)$, see (\ref{eq.1G}).
Then any unital Borel map $u : \Pi \to U(A)$, $u(1) = 1$, defines the cocycle
$(u,du) \in$ $\check{Z}^2(\Pi,{}^1U(A))$,
where
\[
du(l,m) \, := \, u_l u_m u_{lm}^*
\ \ \ , \ \ \ 
\forall l,m \in \Pi
\ .
\]
Since $u$ is Borel, and since products in $\Pi$ and $U(A)$ are continuous, 
we have that $du$ is Borel.
Clearly $(\ad_u ,du) \in$ $\check{Z}^2(\Pi,{}^2\Aut A)$, $\ad_u := \ad \circ u$,
and $\mA = ( A , \ad , \ad_u , du )$ is a Busby-Smith twisted $C^*$-dynamical system.
In this case we say that $\mA$ is \emph{spatial}.

\paragraph{Morphisms.}
Let $\mA' = (A',\alpha',\nu',\gamma')$ a twisted $C^*$-dynamical system over $G',\Pi$.
A \emph{morphism}
$\eta^\phi : \mA \to \mA'$
is given by a *-morphism $\eta : A \to A'$ and a Borel group morphism $\phi : G \to G'$, such that
\begin{equation}
\label{eq.C3.1}
\eta \circ \nu_l \, = \, \nu'_l \circ \eta
\ \ \ , \ \ \
\eta \circ \alpha_g \, = \, \alpha'_{\phi(g)} \circ \eta
\ ,
\end{equation}
for all $l,m \in \Pi$ and $g \in G$.
The notion of morphism defines the category $\tdyn_\Pi$.
We may keep $G$ fixed and consider morphisms with $\phi = id$, writing $\eta \equiv \eta^{id}$:
this yields the subcategory $\tdyn_{\Pi,G}$.

Now, the first of (\ref{eq.C3.1}) implies $\eta \circ \alpha_{\gamma(l,m)} =$ $\alpha'_{ \gamma'(l,m) } \circ \eta$,
and writing the second of (\ref{eq.C3.1}) for $g = \gamma(l,m)$ yields
\begin{equation}
\label{eq.C3.1a}
\alpha'_{ \phi(\gamma(l,m)) } \circ \eta \ = \ \alpha'_{ \gamma'(l,m) } \circ \eta
\ \ \ , \ \ \ 
l,m \in \Pi
\ ;
\end{equation}
thus a stronger version of (\ref{eq.C3.1a}) is 
\begin{equation}
\label{eq.C3.1a'}
\phi(\gamma(l,m)) \ = \ \gamma'(l,m) \ \ \ , \ \ \ l,m \in \Pi \ ,
\end{equation}
that will be required in the cases in which we are interested to preserve the map $\gamma$.

It is instructive to write (\ref{eq.C3.1}) and (\ref{eq.C3.1a}) when 
$\mA' = (A' , \ad , \ad_u , du)$
is spatial:
\begin{equation}
\label{eq.C3.1b}
\eta(\nu_l(t)) \, = \,  u_l \, \eta(t) \, u_l^* 
\ \ \ , \ \ \ 
\eta \circ \alpha_g \, = \, \ad_{\phi(g)} \circ \eta
\ \ \ , \ \ \ 
\ad_{\phi(\gamma(l,m))} \circ \eta \, = \,  \ad_{du(l,m)} \circ \eta 
\ ,
\end{equation}
for all $t \in A$, $l,m \in \Pi$ and $g \in G$.
We say that $\eta^\phi$ is \emph{spatial} whenever (\ref{eq.C3.1a'}) holds:
\begin{equation}
\label{eq.C3.1c}
\phi(\gamma(l,m)) \ = \  du(l,m)
\ \ \ , \ \ \ 
\forall l,m \in \Pi
\ .
\end{equation}
\begin{rem}
\label{rem.gamma}
Let $\mA' = (A,\alpha,\nu,\gamma')$ be a twisted $C^*$-dynamical system. 
Then, for any map $\gamma : \Pi \times \Pi \to G$ such that $\alpha_{\gamma'} = \alpha_\gamma$,
the identity $id \in \Aut A$ defines an isomorphism $\iota : \mA \to \mA'$, 
where $\mA := (A,\alpha,\nu,\gamma)$.
As an example, we may take
\[
\gamma(l,m) \, := \, \gamma'(l,1)^{-1} \gamma'(1,1) \gamma'(1,m)^{-1} \gamma'(l,m) \ \ \ , \ \ \ \forall l,m \in \Pi \ .
\]
By the second of (\ref{eq.tds'}) we have that $\alpha_\gamma = \alpha_{\gamma'}$, with $\gamma$ fulfilling
\begin{equation}
\label{eq.gamma}
\gamma(l,1)  \, = \, \gamma(1,l) \, = \, 1 \ \ \ , \ \ \ \forall l \in \Pi \ .
\end{equation}
Thus, up to isomorphism, any $\mA = (A,\alpha,\nu,\gamma)$ is such that $\gamma$ fulfils (\ref{eq.gamma}). 
%
%
\end{rem}
\begin{rem}
\label{rem.BS}
Let $\mA = (A,\ad,\nu,\gamma)$ denote a Busby-Smith twisted $C^*$-dynamical system.
We consider the crossed product $C^*$-algebra $B := A \times_{\nu,\gamma} \Pi$ \cite[Def.2.4,Rem.2.5]{PR89}.
Then by definition there are a unital Borel map $u : \Pi \to U(B)$ and a non-degenerate *-morphism $\eta : A \to M(B)$,
defining the spatial morphism $\eta^\phi : \mA \to \mM(\mB) := (M(B),\ad,\ad_u,du)$ with $\phi = \eta |_{U(A)}$.
\end{rem}

\

We now give a different notion of morphism coherent with the cohomology relation (\ref{eq.B3.4a}).
This will turn out to be a weaker version of the notion of \emph{exterior equivalence} \cite[Def.3.1]{PR89}.
\begin{defn}
\label{def.tmor}
Let
$\mA = (A,\alpha,\nu,\gamma)$, $\mA' = (A',\alpha',\nu',\gamma')$
be twisted $C^*$-dynamical systems.
A morphism $\eta_\kappa : \mA \to \mA'$ is given by a *-morphism $\eta : A \to A'$ and a Borel map
$\kappa : \Pi \to G$
such that, for all $l,m \in \Pi$ and $g \in G$,
\begin{equation}
\label{eq.C3.mk}
\eta \circ \alpha_g \, = \, \alpha'_g \circ \eta 
\ \ \ , \ \ \ 
\alpha'_{\kappa(l)} \circ \eta \circ \nu_l \ = \ \nu'_l \circ \eta \ .
\end{equation}
This defines the category $\tdyn^c_{\Pi,G}$.
\end{defn}
Note that, for simplicity, here we did not use the term $w \in N$ appearing in (\ref{eq.B3.4a}).

Let now in particular
$\mA = (A,\alpha_0,\alpha_1\circ\bs\nu,\gamma)$, $\mA' = (A',\alpha'_0,\alpha'_1\circ\bs\nu',\gamma')$
be systems having the same structure 2--group ${}^2G = (G \tto N)$.
If $A = A'$ and $\eta = id$, then $\alpha = \alpha'$ and the second of (\ref{eq.C3.mk}) implies
\[
\alpha_{\gamma'(l,m)} \ = \ \wa{\nu}'_l(\alpha(\kappa_m)) \circ \alpha(\kappa_l) \circ \alpha_{\gamma(l,m))} \circ \alpha(\kappa_{lm})^{-1} \ ,
\]
for all $l,m \in \Pi$. Thus a stronger version of the previous equality is
\begin{equation}
\label{eq.ext-eq}
\gamma'(l,m) \ = \ \wa{\bs\nu}'_l(\kappa_m) \, \kappa_l \, \gamma(l,m)) \, \kappa_{lm}^{-1} \ ,
\end{equation}
corresponding to the second of (\ref{eq.B3.4a}).

Now, we make the further assumption that $\mA , \mA'$ are Busby-Smith.
If $\mA , \mA'$ are exterior equivalent by means of a map $\kappa : \Pi \to U(A)$, see \cite[Def.3.1]{PR89},
then $id_\kappa$ defines a morphism of the type (\ref{eq.C3.mk}), fulfilling the stronger property (\ref{eq.ext-eq}).

\

Of course, we may combine (\ref{eq.C3.1}), (\ref{eq.C3.mk}) and define a notion of morphism involving both $\phi$ and $\kappa$,
nevertheless here we will do not need such a generality.

\paragraph{Non-abelian cocycles and twisted systems.}
Let ${}^2G = (G \tto N)$ be a topological 2--group,
$(\bs\nu,\gamma) \in \check{Z}^2(\Pi,{}^2G)$ with $\bs\nu , \gamma$ Borel maps
{\footnote{Here we use a bold character for $\bs\nu : \Pi \to N$ to emphasize that it \emph{does not} take values in $\Aut A$,
           differently from the map $\nu$ appearing in the previous lines.}},
and $A$ a $C^*$-algebra.
If ${}^1\Aut A$ is the 2--group defined as in (\ref{eq.1G}) and 
$\alpha : {}^2G \to {}^1\Aut A$
is a morphism, then the  twisted $C^*$-dynamical system 
\begin{equation}
\label{eq.A1a}
{}^1\mA_\alpha \, := \, ( A , \alpha_0 , \alpha_1 \circ \bs\nu , \gamma )
\end{equation}
is defined.
If we have a Green $C^*$-dynamical system 
$\alpha : {}^2G \to {}^2\Aut A$
\S \ref{sec.A0}, then we can define 
${}^2\mA_\alpha := ( A , \ad , \alpha_1 \circ \bs\nu , \alpha_0 \circ \gamma )$
which is of Busby-Smith type.

In the following definition we relax the properties defining (\ref{eq.A1a}), 
by not requiring that the second and the third of (\ref{eq.B3.4}) necessarily hold for $\gamma$:
\begin{defn}
\label{def.sg}
Let ${}^2G = (G \tto N)$ be a topological 2--group and $\alpha : {}^2G \to {}^1\Aut A$ a morphism.
Given a twisted $C^*$-dynamical system $\mA = (A,\alpha_0,\nu,\gamma)$ over $G,\Pi$,
we say that ${}^2G$ is a \textbf{structure 2--group} for $\mA$ whenever there is a mapping
$\bs\nu : \Pi \to N$
such that 
\begin{equation}
\label{eq.sg}
\nu \ = \ \alpha_1 \circ \bs\nu
\ \ \ , \ \ \
\bs\nu_1 = 1
\ \ \ , \ \ \ 
\bs\nu_l \, \bs\nu_m \, = \, \unl{\gamma(l,m)} \, \bs\nu_{lm}
\ \ \ , \ \ \ 
\forall l,m \in \Pi
\ .
\end{equation}
\end{defn}
Clearly any system of the type (\ref{eq.A1a}) has ${}^2G$ as a structure group.
More in general, we shall say that $\mA'$ has structure 2--group ${}^2G$ whenever 
there is a morphism $\eta^\phi : \mA' \to \mA$, with $\mA$ as in the previous definition
and $\eta$ a *-isomorphism.
\begin{rem}
\label{rem.sg}
Let $\mA = (A,\alpha,\nu,\gamma)$ be a twisted $C^*$-dynamical system.
Set
${}^1\mA_\iota = ( A , \iota_0 , \iota_1 \circ \nu , \alpha_\gamma )$,
where 
$\iota_0 : \alpha(G) \to \Aut A$ and $\iota_1 : N_\alpha A \to \Aut A$
are the inclusion maps and define the 2--group morphism $\iota : {}^2G_\alpha \to {}^1\Aut A$. 
Then $\bs\nu := \iota_1 \circ \nu = \nu$ obviously fulfils (\ref{eq.sg}) and $(\bs\nu,\alpha_\gamma) \in \check{Z}^2(\Pi,{}^2G_\alpha)$.
We have the morphism $id^\alpha : \mA \to {}^1\mA_\iota$, where $id \in \Aut A$ is the identity.
This proves that any twisted $C^*$-dynamical system has "at worst" ${}^2G_\alpha$ as a structure 2--group.
\end{rem}

\begin{ex}
\label{eq.sg.inn}
Let $\mA = (A,\ad,\nu,\gamma)$ be inner.
Then (\ref{eq.sg}) is fulfilled with the 2--group 
${}^2\Aut A$,
$\alpha_0 : U(A) \to \Aut A$ the adjoint action,
$\alpha_1 : \Aut A \to \Aut A$ the identity map,
and
$\bs \nu = \nu$.
Thus $\mA$ has ${}^2\Aut A$ as a structure 2--group.
\end{ex}

\begin{rem}
\label{rem.BMZ1}
Let $\mA = (A,\alpha_0,\alpha_1\circ\bs\nu,\gamma)$ be a twisted $C^*$-dynamical system with structure 2--group ${}^2G = (G \tto N)$.
Then we may restrict $\alpha$ to a morphism
$\alpha_{\bs\nu} : {}^2G_{\bs\nu} \to {}^1\Aut A$,
where ${}^2G_{\bs\nu} = (G \tto G \vee_{\bs\nu} \Pi)$ is defined by taking the subgroup $G \vee_{\bs\nu} \Pi \subseteq N$
generated by elements of the type
$\unl{g} \bs\nu_l$, $g \in G$, $l \in \Pi$,
with the maps $G \to G \vee_{\bs\nu} \Pi$ and $G \vee_{\bs\nu} \Pi \to \Aut G$ obtained by restriction.
\end{rem}

\paragraph{Embeddings in Busby-Smith systems.}
We now give some conditions to embed a twisted system into a spatial (Busby-Smith) one.
We shall see that in general, to get the desired embedding, one pays the price of
passing from $G$ to an image of $G$ through a morphism.
\begin{lem}
\label{lem.C3.1}
Let
$\mA = (A,\alpha,\nu,\gamma)$
such that the group $S \subseteq \Aut A$ generated by $\alpha(G)$, $\nu(\Pi)$  is locally compact.
Then $\mA$ admits a spatial morphism $\eta^\phi : \mA \to \mB$, with $\eta$ injective.
\end{lem}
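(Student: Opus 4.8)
The plan is to realise the target as a full crossed product of $A$ by the locally compact group $S$, so that inside the multiplier algebra of that crossed product both $\alpha(G)$ and $\nu(\Pi)$ are implemented by unitaries, and the cocycle identity (\ref{eq.tds}) for $(\nu,\gamma)$ becomes exactly the relation $\phi\circ\gamma = du$ required by spatiality.

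First I would let $\iota : S \to \Aut A$ denote the inclusion. Since $S$ carries the subspace topology from $\Aut A$ (pointwise convergence) and is locally compact by hypothesis, $\iota$ is a pointwise continuous action, so the crossed product $D := A \rtimes_\iota S$ is defined; I use the canonical nondegenerate, injective *-morphism $\pi : A \to M(D)$ together with the strictly continuous homomorphism $S \ni s \mapsto \lambda_s \in U(M(D)) = U(D)$ satisfying the covariance relation $\lambda_s\,\pi(t)\,\lambda_s^{*} = \pi(s(t))$ for all $s \in S$, $t \in A$. Since $\nu(\Pi) \subseteq S$ and $\alpha(G) \subseteq S$, the maps
\[
u : \Pi \to U(D) \ , \quad u_l := \lambda_{\nu_l} \ , \qquad \phi : G \to U(D) \ , \quad \phi(g) := \lambda_{\alpha_g} \ ,
\]
are well defined; $u$ is Borel (the Borel map $\nu$ composed with the strictly continuous $\lambda$), $\phi$ is a continuous group morphism, and $u_1 = \lambda_{id} = 1$. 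Because $\lambda$ is a homomorphism and $\nu_l \circ \nu_m \circ \nu_{lm}^{-1} = \alpha_{\gamma(l,m)}$ by (\ref{eq.tds}), we get
\[
du(l,m) \ = \ u_l\,u_m\,u_{lm}^{*} \ = \ \lambda_{\nu_l \circ \nu_m \circ \nu_{lm}^{-1}} \ = \ \lambda_{\alpha_{\gamma(l,m)}} \ = \ \phi(\gamma(l,m)) \ ,
\]
so that $\mB := (M(D),\ad,\ad_u,du)$ is a spatial twisted $C^*$-dynamical system over $U(D),\Pi$, in the sense of Remark \ref{rem.BS}.

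Next I would set $\eta := \pi : A \to M(D)$, which is injective. The covariance relation gives at once
\[
\eta(\nu_l(t)) = \lambda_{\nu_l}\,\pi(t)\,\lambda_{\nu_l}^{*} = u_l\,\eta(t)\,u_l^{*} \ , \qquad \eta(\alpha_g(t)) = \lambda_{\alpha_g}\,\pi(t)\,\lambda_{\alpha_g}^{*} = \ad_{\phi(g)}(\eta(t)) \ ,
\]
which are the first two relations in (\ref{eq.C3.1b}); the third then follows from the equality $\phi(\gamma(l,m)) = du(l,m)$ established above, and this equality is moreover precisely the spatiality condition (\ref{eq.C3.1c}). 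Hence $\eta^\phi : \mA \to \mB$ is a spatial morphism with $\eta$ injective, which is the assertion.

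The only genuinely delicate point is the measurability bookkeeping: one has to observe that $S$, being a locally compact subgroup of $\Aut A$, is closed (hence a Borel subset), so the corestriction $\nu : \Pi \to S$ of the given Borel map is still Borel, and that $s \mapsto \lambda_s$ is strictly continuous, so that $u = \lambda \circ \nu$ is Borel; both are standard facts from the theory of crossed products by locally compact groups. Everything else is a direct transcription of the crossed-product covariance relations and of the cocycle identity (\ref{eq.tds}), and should be a short computation once the crossed product is in place.
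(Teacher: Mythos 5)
Your proposal is correct and takes essentially the same route as the paper: both form the crossed product $A \rtimes^\iota S$ for the inclusion $\iota : S \to \Aut A$, use the canonical covariant pair to define $\eta$ and the unitaries $u_l$, $\phi(g)$ in its multiplier algebra, and read off the spatiality condition $\phi(\gamma(l,m)) = du(l,m)$ directly from the cocycle identity (\ref{eq.tds}). The only difference is cosmetic (your extra care about the Borel corestriction of $\nu$ to $S$, which the paper leaves implicit).
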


\begin{proof}
The inclusion map $\iota : S \to \Aut A$ makes $A$ an $S$-dynamical system, 
and we consider the crossed product $A \rtimes^\iota S$ \cite[\S 2.3-4]{Wil}. 
We set $B:= M(A \rtimes^\iota S)$.
By \cite[Prop.2.34]{Wil} there are a faithful *-morphism 
$\eta : A \to B$
and a strictly continuous group morphism
$\upsilon : S \to U(B)$,
such that $\upsilon_{id} = 1$ and $\upsilon_r \eta(t) \upsilon_r^* = \eta(r(t))$ for all $r \in S$, $t \in A$.
Then we define
$u_l := \upsilon_{\nu_l} \in$ $U(B)$, $l \in \Pi$.
Since $\upsilon$ is continuous and $\nu$ is Borel we have that $u$ is Borel, and $u_1 = 1$.
This makes $\mB = (B,\ad,\ad_u,du)$ a spatial twisted $C^*$-dynamical system. 
Setting 
$\phi(g) := \upsilon_{\alpha_g}$ for all $g \in G$,
we find
\[
\phi(\gamma(l,m)) \, = \, 
\upsilon_{ \alpha_{\gamma(l,m)} } \, = \, 
\upsilon_{\nu_l} \upsilon_{\nu_m} \upsilon_{\nu_{lm}}^* \, = \,
du(l,m)
\ ,
\]
and this yields the desired spatial morphism
$\eta^\phi : \mA \to \mB$.
\end{proof}

\begin{lem}
\label{lem.C3.2}
Let
$\mA = (A,\alpha,\nu,\gamma)$
with $\alpha : G \to \Aut A$ faithful.
Then there are a topological 2--group ${}^2G = (G \tto N)$ such that $\alpha$ extends to a continuous morphism
$\alpha : {}^2G \to {}^1\Aut A$,
and a cocycle $(\bs\nu,\gamma) \in \check{Z}^2(\Pi,{}^2G)$
such that $\mA$ is isomorphic to ${}^1\mA_\alpha$.
\end{lem}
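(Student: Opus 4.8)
The plan is to exhibit $\mA$ itself as a system of the form (\ref{eq.A1a}), using for $N$ the normalizer of the (faithful) $G$-action inside $\Aut A$. Faithfulness is exactly what lets us keep the original group $G$ and the original twisting map $\gamma$, instead of passing to $\alpha(G)$ and $\alpha\circ\gamma$ as one must in the general situation of Remark \ref{rem.sg}.

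First I would set $N := N_\alpha A$, the normalizer of $\alpha(G)$ in $\Aut A$, with the topology inherited from $\Aut A$. Since $\alpha$ is faithful it is an isomorphism of $G$ onto $\alpha(G)$, so transporting along it the crossed-module structure of the normalizer $2$-group of \S\ref{sec.A0} yields a topological $2$-group ${}^2G = (G \tto N)$ with structure maps $i := \alpha : G \to N$ (injective, with image the normal subgroup $\alpha(G)$) and $\beta_u := \alpha^{-1}\circ(\ad u)|_{\alpha(G)}\circ\alpha$ for $u \in N$. Taking $\alpha_0 := \alpha : G \to \Aut A$ and $\alpha_1$ the inclusion $N \hookrightarrow \Aut A$, the pair $\alpha := (\alpha_0,\alpha_1)$ is a continuous $2$-group morphism ${}^2G \to {}^1\Aut A$: the two equivariance conditions reduce to $\alpha_1(i(g)) = \alpha(g)$ and $\alpha(\beta_u(g)) = u\alpha(g)u^{-1}$, both immediate, and continuity of $\alpha_0$, $\alpha_1$ is clear. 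Its $0$-component is the original action, so this is the required extension of $\alpha$; it is the morphism $\iota$ of Remark \ref{rem.sg}, read through the identification $G \simeq \alpha(G)$.

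Next I would check that $(\bs\nu,\gamma)$, with $\bs\nu := \nu$ --- which does take values in $N = N_\alpha A$ by the very definition of a twisted $C^*$-dynamical system --- lies in $\check{Z}^2(\Pi,{}^2G)$. Both maps are Borel by hypothesis, and $\bs\nu_1 = \nu_1 = id = 1$. Under the identification $i = \alpha$ we have $\unl{\gamma(l,m)} = \alpha(\gamma(l,m)) = \alpha_{\gamma(l,m)}$, so the defining relation (\ref{eq.tds}), $\nu_l\circ\nu_m = \alpha_{\gamma(l,m)}\circ\nu_{lm}$, is literally the first equation of (\ref{eq.B3.4}); since $i$ is injective the second equation of (\ref{eq.B3.4}) follows automatically, as noted right after it; and the normalization $\gamma(l,1) = \gamma(1,l) = 1$ comes from (\ref{eq.tds'}), which gives $\alpha_{\gamma(l,1)} = \alpha_{\gamma(1,l)} = id$, together with faithfulness of $\alpha$. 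Unwinding (\ref{eq.A1a}) with these choices then gives ${}^1\mA_\alpha = (A,\alpha_0,\alpha_1\circ\bs\nu,\gamma) = (A,\alpha,\nu,\gamma) = \mA$, so in fact $\mA$ equals ${}^1\mA_\alpha$ on the nose and $(id_A,id_G)$ is the desired isomorphism in $\tdyn_\Pi$.

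I do not expect a genuine obstacle: the argument is essentially a repackaging of $\mA$, and the content of faithfulness is exactly the three observations above --- injectivity of $i$, whence the second clause of (\ref{eq.B3.4}); the normalization of $\gamma$; and the possibility of leaving $G$ and $\gamma$ untouched. The only point needing comment is the standing convention that "topological group" means locally compact: if one insists on that, $N_\alpha A$ should be replaced by a suitable locally compact subgroup containing $\alpha(G)$ and $\nu(\Pi)$ --- compare the hypothesis of Lemma \ref{lem.C3.1} --- and otherwise "topological $2$-group" is to be read in the plain Hausdorff sense; the algebraic argument is unaffected either way.
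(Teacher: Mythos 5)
Your proposal is correct and follows essentially the same route as the paper's own proof: take $N := N_\alpha A$ with $i = \alpha$ and $\beta_u = \alpha^{-1}\circ \ad u \circ \alpha$, let $\alpha_1$ be the inclusion $N \hookrightarrow \Aut A$, set $\bs\nu = \nu$, and observe that the identity of $A$ gives the isomorphism $\mA \simeq {}^1\mA_\alpha$. Your explicit verification of the cocycle conditions (injectivity of $i$ giving the second of (\ref{eq.B3.4}), and faithfulness plus (\ref{eq.tds'}) giving the normalization of $\gamma$) only spells out what the paper leaves implicit.
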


\begin{proof}
We define $N := N_\alpha A$. Afterwards, we define the maps 
$i : G \to N$, $i(g) := \alpha(g)$,
and
$\beta : N \to \Aut G$, $\beta_u(g) := \alpha^{-1}(u \circ \alpha(g) \circ u^{-1})$, $u \in N$, $g \in G$.
This yields the 2--group ${}^2G = (G \tto N)$, and the morphism
$\alpha_0(g) := \alpha(g)$,
$\alpha_1(u) := u \in \Aut A$.
Then we define $\bs\nu_l := \nu_l \in N$ for all $l \in \Pi$,
and this yields the cocycle $(\bf\nu,\gamma)$.
It is then clear that the identity $\iota \in \Aut A$ defines the desired isomorphism $\mA \simeq {}^1\mA_\alpha$.
\end{proof}

\begin{lem}
\label{lem.C3.3}
Let $G$ be locally compact and 
$\mA = (A,\alpha_0,\alpha_1\circ\bs\nu,\gamma)$
with structure group ${}^2G = (G \tto N)$.
Then there is a morphism $\eta^\phi : \mA \to \mB$, with $\eta$ faithful and
$\mB = (B,\ad,\bar\nu,\bar\gamma)$ 
inner, such that
\begin{equation}
\label{eq.lem.C3.3}
\phi(\wa{\bs\nu}_l(g)) \ = \ \bar\nu_l(\phi(g))
\ \ \ , \ \ \ 
g \in G \, , \, l \in \Pi \ .
\end{equation}
In particular, if $(\bs\nu,\gamma) \in \check{Z}^2(\Pi,{}^2G)$, 
then $\mA = {}^1\mA_\alpha$ and $\mB$ is of Busby-Smith type.
\end{lem}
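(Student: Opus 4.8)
The plan is to render the $G$--action inner by passing to a crossed product, just as in Lemma~\ref{lem.C3.1}, but crossing only by $G$ (possible since $G$ is locally compact) and letting $\Pi$ ride along through the functoriality of the crossed product under automorphisms of the dynamical system $(A,G,\alpha_0)$.

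Concretely, I would put $C := A \rtimes_{\alpha_0} G$ and $B := M(C)$. By \cite[Prop.2.34]{Wil} the canonical covariant pair supplies a faithful nondegenerate $*$-morphism $\eta : A \to B$ and a strictly continuous (hence Borel) group morphism $\phi : G \to U(B)$ with $\ad_{\phi(g)} \circ \eta = \eta \circ \alpha_0(g)$, which is already the second of (\ref{eq.C3.1}) with $\alpha' = \ad$. Since $\alpha = (\alpha_0,\alpha_1) : {}^2G \to {}^1\Aut A$ is a $2$--group morphism, the crossed--module relations of ${}^2G$ give, for all $l \in \Pi$, $g \in G$,
\[
\nu_l \circ \alpha_0(g) \circ \nu_l^{-1} \;=\; \alpha_1\big(\bs\nu_l\,\unl{g}\,\bs\nu_l^{-1}\big) \;=\; \alpha_1\big(\unl{\wa{\bs\nu}_l(g)}\big) \;=\; \alpha_0\big(\wa{\bs\nu}_l(g)\big)\ ,
\]
so $(\nu_l,\wa{\bs\nu}_l)$ is an automorphism of $(A,G,\alpha_0)$ and induces an automorphism $\bar\nu_l := \nu_l \rtimes \wa{\bs\nu}_l$ of $C$, extended strictly to $B$, determined on the generators by
\[
\bar\nu_l \circ \eta \;=\; \eta \circ \nu_l \ \ , \ \ \bar\nu_l \circ \phi \;=\; \phi \circ \wa{\bs\nu}_l \ \ , \ \ l \in \Pi\ .
\]
The first identity will be the first of (\ref{eq.C3.1}), the second is exactly (\ref{eq.lem.C3.3}). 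Putting $\bar\gamma(l,m) := \phi(\gamma(l,m)) \in U(B)$, which is Borel, I claim that $\mB := (B,\ad,\bar\nu,\bar\gamma)$ and $\eta^\phi : \mA \to \mB$ do the job.

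The remaining work is a verification that $\mB$ is a twisted $C^*$-dynamical system --- it then has action $\ad$, hence is inner. Measurability of $l \mapsto \bar\nu_l$ follows from the two displayed relations, $\nu$ and $\bs\nu$ being Borel; and $\bar\nu_1 = \nu_1 \rtimes \wa{\bs\nu}_1 = id$ since $\nu_1 = id$, $\bs\nu_1 = 1$. For the twisted cocycle identity I would compare $\bar\nu_l \circ \bar\nu_m$ and $\ad_{\bar\gamma(l,m)} \circ \bar\nu_{lm}$ on the generators $\eta(A)$, $\phi(G)$: from $\bs\nu_l \bs\nu_m = \unl{\gamma(l,m)}\,\bs\nu_{lm}$ one gets $\nu_l \circ \nu_m = \alpha_1(\bs\nu_l\bs\nu_m) = \alpha_0(\gamma(l,m)) \circ \nu_{lm}$ on the $A$--side, and $\wa{\bs\nu}_l \circ \wa{\bs\nu}_m = \beta_{\bs\nu_l\bs\nu_m} = \beta_{\unl{\gamma(l,m)}} \circ \wa{\bs\nu}_{lm} = \ad_{\gamma(l,m)} \circ \wa{\bs\nu}_{lm}$ on the $G$--side (Peiffer identity); hence both automorphisms send $\eta(a)$ to $\eta(\alpha_0(\gamma(l,m))\,\nu_{lm}(a))$ and $\phi(s)$ to $\phi(\gamma(l,m)\,\wa{\bs\nu}_{lm}(s)\,\gamma(l,m)^{-1})$, so they coincide, giving $\bar\nu_l \circ \bar\nu_m = \ad_{\bar\gamma(l,m)} \circ \bar\nu_{lm}$. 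Together with the two displayed relations and $\ad_{\phi(g)} \circ \eta = \eta \circ \alpha_0(g)$ this also shows $\eta^\phi$ is a morphism satisfying (\ref{eq.lem.C3.3}), with $\eta$ faithful.

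For the last assertion, if $(\bs\nu,\gamma) \in \check{Z}^2(\Pi,{}^2G)$ then $\mA$ is literally of the form (\ref{eq.A1a}), i.e. $\mA = {}^1\mA_\alpha$, and it only remains to upgrade $\mB$ to Busby--Smith type, i.e. $(\bar\nu,\bar\gamma) \in \check{Z}^2(\Pi,{}^2\Aut B)$: the normalisations $\gamma(l,1) = \gamma(1,l) = 1$ give $\bar\gamma(l,1) = \bar\gamma(1,l) = 1$, and applying $\phi$ to $\gamma(l,m)\,\gamma(lm,n) = \wa{\bs\nu}_l(\gamma(m,n))\,\gamma(l,mn)$, using $\phi \circ \wa{\bs\nu}_l = \bar\nu_l \circ \phi$, yields $\bar\gamma(l,m)\,\bar\gamma(lm,n) = \bar\nu_l(\bar\gamma(m,n))\,\bar\gamma(l,mn)$. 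I expect the only genuinely delicate point to be the well-definedness of $\bar\nu_l = \nu_l \rtimes \wa{\bs\nu}_l$ and its extension to $B = M(A \rtimes_{\alpha_0} G)$; the covariance that makes it well defined is furnished precisely by $\alpha$ being a $2$--group morphism, everything else being the bookkeeping above.
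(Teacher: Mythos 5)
Your proposal is correct and follows essentially the same route as the paper: both pass to $B=M(A\rtimes^{\alpha_0}G)$ with the canonical covariant pair $(\eta,\phi)$, obtain $\bar\nu_l$ from the covariant pair $(\eta\circ\nu_l,\phi\circ\wa{\bs\nu}_l)$ (your $\nu_l\rtimes\wa{\bs\nu}_l$ is exactly the paper's appeal to the universal property plus \cite[Prop.3.11]{Wil} for invertibility), set $\bar\gamma=\phi\circ\gamma$, and verify the twisted relation on the generators $\eta(A)\phi(G)$. The point you flag as delicate (well-definedness and extension to the multiplier algebra) is precisely where the paper cites the semidirect-product argument of \cite[Prop.3.11]{Wil}.
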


\begin{proof}
We consider the crossed product $A \rtimes^\alpha G$ \cite[\S 2.3-4]{Wil} (here for simplicity we write $\alpha$ for $\alpha_0$),
and define $B := M(A \rtimes^\alpha G)$.
By construction, there are faithful morphisms
$\eta : A \to B$  and $\phi : G \to U(B)$,
such that 
$\phi(g) \eta(t) \phi(g)^* =$ $\eta(\alpha_g(t))$
for all $t \in A$ and $g \in G$.
For any $l,m \in \Pi$ and $g \in G$ we set
$\nu_l :=$ $\alpha_1(\bs\nu_l) \in \Aut A$,
and
$\bar\gamma(l,m) :=$ $\phi(\gamma(l,m)) \in$ $U(B)$.
Since
\[
\eta( \nu_l(\alpha_g(t)) ) \ = \
\eta \circ \wa{\nu}_l(\alpha_g) \circ \nu_l(t) \ = \
\eta \circ \alpha_0(\wa{\bs\nu}_l(g)) \circ \nu_l(t) \ = \
\ad \phi(\wa{\bs\nu}_l(g)) \circ \eta \circ \nu_l (t)
\ ,
\]
the pair $( \eta \circ \nu_l , \phi \circ \wa{\bs\nu}_l )$ defines a covariant morphism
of $(A,\alpha)$ into $B$, thus by universal property of the crossed product it defines
a *-morphism
$\bar\nu_l : A \rtimes^\alpha G \to B$.
%
The fact that $\bar\nu_l$ actually extends to a *-automorphism of $A \rtimes^\alpha G$ follows by the argument of
\cite[Prop.3.11]{Wil}, having formed the semidirect product $G \rtimes N$.
By standard properties of multiplier algebras, we extend $\bar\nu_l$ to an automorphism of $B$,
that we denote again by $\bar\nu_l$.
Since $\eta$, $\phi$ and $\bs\nu$ are continuous, and $\nu$ is Borel, we have that the mapping
$l \mapsto \bar\nu_l$
is Borel. Moreover, 
\[
\bar\nu_l \circ \bar\nu_m ( \, \eta(t) \phi(g) \, )
\ = \ 
\eta( \alpha_{\gamma(l,m)} \circ \nu_{lm}(t)) \, \, \phi( \ad \gamma(l,m) \circ \wa{\bs\nu}_{lm}(g) )
\ = \
\ad \bar\gamma(l,m) \circ \bar\nu_{lm} \, ( \, \eta(t) \phi(g) \, )
\ .
\]
Thus $\mB = (B,\ad,\bar\nu,\bar\gamma)$ is the desired inner system.
Finally, if $(\bs\nu,\gamma) \in \check{Z}^2(\Pi,{}^2G)$ then (\ref{eq.B3.4}) holds and $\bar\gamma(l,m)$ fulfils (\ref{eq.tds''}),
implying that $\mB$ is Busby-Smith.
\end{proof}

\begin{rem}
\label{rem.BMZ2}
In the situation of the previous Lemma, we have the Green $C^*$-dynamical system
\[
\beta : {}^2G_{\bs\nu} \to {}^2\Aut B \ \ , \ \ \beta_1(g) := \phi(g) \ \ , \ \ \beta_2(\unl{g} \bs\nu_l) := \ad_{\phi(g)} \circ \bar\nu_l \ ,
\]
where ${}^2G_{\bs\nu}$ is defined in Remark \ref{rem.BMZ1}.
That $\beta$ is actually a 2--morphism follows by the definition
$\beta_2(\unl{g}) =$ $\ad_{\phi(g)} =$ $\ad_{\beta_1(g)}$
and by (\ref{eq.lem.C3.3}).
\end{rem}

\begin{cor}
\label{cor.C3.3}
Let
$\mA = (A,\alpha,\nu,\gamma)$
such that $\alpha(G)$ is locally compact.
Then there is a morphism $\eta^\phi : \mA \to \mB$,
with $\mB$ of Busby-Smith type and $\eta$ injective.
\end{cor}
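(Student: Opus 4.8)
The plan is to reduce to the situation of Lemma~\ref{lem.C3.3} by two preliminary moves and then compose arrows in $\tdyn_\Pi$. The reason for routing through Lemma~\ref{lem.C3.3} rather than Lemma~\ref{lem.C3.1} is that the crossed product used in the former is taken only by $G$ (after $G$ has been identified with $\alpha(G)$), so mere local compactness of $\alpha(G)$ suffices, whereas Lemma~\ref{lem.C3.1} would require the larger group generated by $\alpha(G)$ and $\nu(\Pi)$ to be locally compact.

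First I would replace $G$ by $\alpha(G)$. Since $\alpha(G)\subseteq\Aut A$ is locally compact by hypothesis and $\alpha:G\to\Aut A$ is continuous, the inclusion $\iota:\alpha(G)\hookrightarrow\Aut A$ is a faithful $C^*$-dynamical system, and, writing $\gamma_1:=\alpha\circ\gamma:\Pi\times\Pi\to\alpha(G)$, the tuple $\mA_1:=(A,\iota,\nu,\gamma_1)$ is a twisted $C^*$-dynamical system over $\alpha(G),\Pi$: indeed $\iota_{\gamma_1(l,m)}=\alpha_{\gamma(l,m)}$, so the relations (\ref{eq.tds}) for $\mA_1$ are exactly those for $\mA$. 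Taking $\eta:=id$ on $A$ and $\phi:=\alpha:G\to\alpha(G)$ gives a morphism $id^\alpha:\mA\to\mA_1$ satisfying (\ref{eq.C3.1}), with injective underlying $*$-morphism. Hence I may assume from now on that $\alpha$ is faithful and that $G$ is locally compact.

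Next I would bring the system into the form (\ref{eq.A1a}). Applying Lemma~\ref{lem.C3.2} to the faithful system $\mA_1$ produces a topological 2--group ${}^2G=(G\tto N)$ (with $N=N_\alpha A$), a continuous morphism $\alpha:{}^2G\to{}^1\Aut A$, and a cocycle $(\bs\nu,\gamma_1)\in\check{Z}^2(\Pi,{}^2G)$ with $\mA_1$ isomorphic to ${}^1\mA_\alpha=(A,\alpha_0,\alpha_1\circ\bs\nu,\gamma_1)$. By construction ${}^2G$ is a structure 2--group for ${}^1\mA_\alpha$, the group $G$ is locally compact, and $(\bs\nu,\gamma_1)$ is a genuine cocycle, so the final assertion of Lemma~\ref{lem.C3.3} supplies a morphism $\eta^\phi:{}^1\mA_\alpha\to\mB$ with $\eta$ faithful and $\mB$ of Busby--Smith type. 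Composing the morphisms $\mA\to\mA_1$, $\mA_1\simeq{}^1\mA_\alpha$ and ${}^1\mA_\alpha\to\mB$ in $\tdyn_\Pi$ then gives the required $\eta^\phi:\mA\to\mB$, with $\eta$ injective because each factor is. The only step carrying any content beyond quoting the previous lemmas is the initial reduction to $\alpha(G)$, and even there the checks are immediate from (\ref{eq.tds}) and the definition (\ref{eq.C3.1}); so I do not expect a genuine obstacle, the argument being essentially the formal composition of Lemmas~\ref{lem.C3.2} and~\ref{lem.C3.3}.
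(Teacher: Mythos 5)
Your argument is correct and is essentially the paper's proof: the initial reduction replacing $G$ by $\alpha(G)$ (so that the system acquires $(\alpha(G)\tto N_\alpha A)$ as structure 2--group with genuine cocycle $(\nu,\alpha_\gamma)$) is precisely the content of Remark~\ref{rem.sg}, which the paper invokes directly before applying Lemma~\ref{lem.C3.3}. Your extra pass through Lemma~\ref{lem.C3.2} merely reconstructs the same normalizer 2--group and is harmless, though redundant.
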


\begin{proof}
By Remark \ref{rem.sg} we have that ${}^2G_\alpha$ is a structure 2--group for $\mA$
and $(\nu,\alpha_\gamma) \in \check{Z}^2(\Pi,{}^2G_\alpha)$. Thus we can apply the previous Lemma.
\end{proof}

\paragraph{Covariant representations.}
Let $\mA = (A,\alpha,\nu,\gamma)$ be a twisted $C^*$-dynamical system.
Given a Hilbert space $H$ and a Borel map $\upsilon : \Pi \to U(H)$, $\upsilon(1) = 1$,
a \emph{covariant representation} of $\mA$, denoted by $\pi^{\rho,\upsilon}$, is by definition a morphism
\[
\pi^\rho : \mA \to \mB(\mH)
\ \ \ \ , \ \ \ \
\mB(\mH) = ( B(H) , \ad , \ad_\upsilon , d\upsilon )
\ ,
\]
with $\pi$ non-degenerate. Explicitly, we have a representation
$\rho : G \to U(H)$
such that, for all $l \in \Pi$, $g \in G$,
\begin{equation}
\label{eq.covrep2}
\ad_{\upsilon_l} \circ \pi = \pi \circ \nu_l
\ \ \ , \ \ \ 
\ad_{\rho(g)} \circ \pi = \pi \circ \alpha_g
\ .
\end{equation}
%
%
We say that $\pi^{\rho,\upsilon}$ is \emph{spatial} whenever
\begin{equation}
\label{eq.covrep1}
d\upsilon(l,m) \, := \, \upsilon_l \upsilon_m \upsilon_{lm}^* \, = \, \rho(\gamma(l,m)) \ ,
\end{equation}
for all $l,m \in \Pi$. Some remarks follow.
\begin{itemize}
      %
\item Let $\mA = ( A , \ad , \ad_u , du )$ be spatial and $\pi : A \to B(H)$ a non-degenerate representation.
      Then defining 
      $\mB(\mH) = ( B(H) , \ad , \ad_\upsilon , d\upsilon )$,
      where $\upsilon_l := \pi(u_l)$, $l \in \Pi$, and $\rho := \pi | U(A)$, one gets the spatial covariant representation 
      $\pi^{\rho,\upsilon} : \mA \to \mB(\mH)$.
      As a consequence, if a twisted $C^*$-dynamical system has non-trivial (faithful) spatial morphisms,
      then it has non-trivial (faithful) spatial covariant representations.
\item Let $\mA$ be of Busby-Smith type and $\pi^{\rho,\upsilon}$ a spatial covariant representation with $\rho = \pi | U(A)$.
      Then $\pi$ and $\upsilon$ define a covariant representation in the sense of \cite[Def.2.3]{PR89}.
\end{itemize}
For Busby-Smith twisted $C^*$-dynamical systems it is customary  to consider more generally representations
over Hilbert modules rather than Hilbert spaces. A recent account on this topic is \cite{BC}.
Following this line, we define a (spatial) representation of $\mA = (A,\alpha,\nu,\gamma)$ over the Hilbert module $X$ as a (spatial) morphism
\[
\pi^\phi : \mA \to \mB(\mX) \ \ \ , \ \ \ \mB(\mX) = (B(X),\ad,\ad_u,du) \ ,
\]
where $B(X)$ the $C^*$-algebra of adjointable, right module operators on $X$ and 
$u : \Pi \to U(X)$ is a unital Borel map. 
Covariant representations over Hilbert modules in the sense of \cite{BC} correspond to the case where $\mA$ is of Busby-Smith type, 
$\pi^\phi$ is spatial, and $\phi = \pi | U(A)$.

\begin{ex}[The regular representation]
\label{ex.regrep}
For simplicity here we assume that $\Pi$ is unimodular
(otherwise, the following definition of $u_l$ would be affected by the modular function, see \cite{BC} and related references).
Let $\mA = ( A , \ad , \nu , \gamma )$ be an inner $C^*$-dynamical system. 
We consider the Hilbert $A$-module $X := L^2_\lambda(\Pi,A)$ of functions $f : \Pi \to A$, square summable with respect to the left Haar measure $\lambda$;
the $A$-valued scalar product on $X$ is given by 
$(f,f') :=$ $\int f(l)^* f'(l) \, d\lambda(l)$, $f,f' \in X$.
We then define the *-morphism
\[
\pi : A \to B(X) 
\ \ \ : \ \ \  
\{ \pi(a)f \}(m) := \nu_m^{-1}(a) f(m) 
\ \ \ , \ \ \ 
a \in A \, , \, f \in X \, , \, m \in \Pi \ ,
\]
that is faithful by standard arguments. 
We also set $\rho := \pi | U(A)$, obtaining a norm-continuous map with values in the group $U(X)$ of unitary operators on $X$.
Then we define the map
\[
u : \Pi \to U(X) 
\ \ \ : \ \ \  
\{ u_l f \}(m) := \nu_m^{-1}( \gamma(l , l^{-1}m) ) f(l^{-1}m) 
\ \ \ , \ \ \ 
l,m \in \Pi \, , \, f \in X \ .
\]
The adjoint $u_l^*$ is dictated by the condition
$( f , u_l f' ) = (u_l^* f , f')$,
%
%
and is given by
\[
\{ u_l^*f \}(m) \, = \, \nu_{lm}^{-1}( \gamma(l,m) )^* f(lm) \ \ \ , \ \ \ l,m \in \Pi \, , \, f \in X \ .
\]
This implies $u_l^* u_l = u_l u_l^* = 1$, and $u$ takes actually values in $U(X)$.
By definition, we have
\[
\{ \pi(\nu_l(a)) f \}(m) \, = \, \nu_m^{-1}(\nu_l(a)) f(m)
\]
and
\[
\begin{array}{lcl}
\{ u_l \pi(a) u_l^* f \}(m) & = &
\nu_m^{-1}( \gamma(l , l^{-1}m) ) \cdot \nu_{l^{-1}m}^{-1}(a) \cdot \nu_m^{-1}( \gamma(l,l^{-1}m) )^* f(m) \ , \\ & = &
\{ \nu_m^{-1} \circ \ad_{ \gamma(l , l^{-1}m) } \circ \nu_m \circ \nu_{l^{-1}m}^{-1}(a) \} \, f(m) \ .
\end{array}
\]
Keeping in mind that $\nu_l \circ \nu_{l^{-1}m} = \ad_{ \gamma(l , l^{-1}m) } \circ \nu_m$, we conclude that
\[
\{ u_l \pi(a) u_l^* f \}(m) \, = \, \{ \pi(a)f \}(m) \ ,
\]
thus the first of (\ref{eq.covrep2}) is fulfilled.
Now, we have
\[
\{ u_1 f \}(m) \, = \, \nu_m^{-1}( \gamma(1,m) ) f(m) \ \ \ , \ \ \ m \in \Pi \ ,
\]
and, in order to get a covariant representation, we want $u$ unital. We have two situations:
\begin{enumerate}
\item $\mA$ is a generic inner twisted $C^*$-dynamical system. In this case we may apply the isomorphism
      of Remark \ref{rem.gamma}, say $\iota : \mA \to \bar\mA$, $\bar\mA =$ $(A,\ad,\nu,\bar{\gamma})$,
      with $\bar\gamma(1,m) \equiv 1$. Thus we define $\bar\pi$ in the same way as $\pi$, with $\bar\gamma$ in place of $\gamma$,
      and set $\pi' := \bar\pi \circ \iota$. In this way, $\pi'^{\rho,u}$ is a faithful covariant representation.
\item $\mA$ is of Busby-Smith type. In this case, the standard construction sketched in \cite{BC} holds,
      and $\pi^{\rho,u}$ is a \emph{spatial} covariant representation of $\mA$ over $X$.
\end{enumerate}
\end{ex}

\paragraph{The convolution algebra.}
We now give the definition of convolution algebra for a twisted $C^*$-dynamical system
$\mA = (A,\alpha_0,\alpha_1\circ\bs\nu,\gamma)$
with structure 2--group ${}^2G = (G \tto N)$.
Note that in general $(\bs\nu,\gamma)$ is not a cocycle, anyway applying Remark \ref{rem.gamma} we assume that 
the third of (\ref{eq.B3.4}) holds and the only property that is not ensured is the second of (\ref{eq.B3.4}).
The standard assumptions that $A$ is separable and $\Pi$ (locally compact and) second countable are made;
when $\mA$ is not inner, $G$ is assumed to be (locally compact and) second countable too.

Our construction relies on the property that the $G$-action can be "made inner". 
Thus, if $\mA$ is not already inner then we consider  the crossed product $A \rtimes^\alpha G$, and we have a twisted, \emph{inner} action
\[
\bar\nu_l \in \Aut (A \rtimes^\alpha G)
\ \ \ , \ \ \ 
\bar\gamma(l,m) \in U(A \rtimes^\alpha G)
\ \ \ , \ \ \ 
l,m \in \Pi
\ ,
\]
see the proof of Lemma \ref{lem.C3.3}.
Instead, if $\mA$ is inner, then the properties of the present and successive paragraphs
hold with $A$ in place of $A \rtimes^\alpha G$.
Note that $\bar\gamma$ fulfils the third of (\ref{eq.B3.4}), but not necessarily the second;
this last is fulfilled when $(\bs\nu,\gamma)$ is a cocycle, implying that 
$( A \rtimes^\alpha G , \ad , \bar\nu , \bar\gamma )$
is Busby-Smith (Lemma \ref{lem.C3.3}).

To simplify the exposition and avoid modular functions, we assume that $\Pi$ is unimodular.
This agrees with our future use of $\Pi$ as a (discrete countable, hence unimodular) homotopy group.

As a first step we consider the vector space $C_c(\Pi,A \rtimes^\alpha G)$ of compactly supported, continuous functions
from $\Pi$ into $A \rtimes^\alpha G$. Given $f,f' \in C_c(\Pi,A \rtimes^\alpha G)$, following \cite[\S 2]{BS70} we define
\begin{equation}
\label{eq.conv1}
\{ f \times f' \}(l) \, := \, 
\int_\Pi f(m) \, \bar\nu_m(f'(m^{-1}l)) \, \bar\gamma(m,m^{-1}l) \, dm \ ,
\end{equation}
\begin{equation}
\label{eq.conv2}
f^*(l) \, := \, 
\bar\gamma(l,l^{-1})^* \, \bar\nu_l(f(l^{-1}))^*  \ .
\end{equation}
We note that since $C_c(G,A)$ densely embeds in $A \rtimes^\alpha G$ \cite[Lemma 2.27]{Wil}, 
we may write our convolution algebra in terms of functions in $C_c( \Pi \times G , A )$,
anyway we prefer to use $C_c(\Pi,A \rtimes^\alpha G)$
since this approach allows a compact notation coherent with the case in which $\mA$ is inner.

The computations in \cite[\S 6]{BS70} show that $\times$ and * make $C_c(\Pi,A \rtimes^\alpha G)$ 
an algebra with involution.
Nevertheless when $\bar\gamma$ does not fulfils the second of (\ref{eq.B3.4}) the product $\times$ is not associative
and the involution *, even if it is idempotent, fails to be anti-multiplicative.
To get an ordinary *-algebra we need the second of (\ref{eq.B3.4}), that is, 
$(\bar\nu,\bar\gamma) \in$ $\check{Z}^2(\Pi,{}^2\Aut(A \rtimes^\alpha G))$,
and clearly this condition is implied by 
$(\bs\nu,\gamma) \in$ $\check{Z}^2(\Pi,{}^2G)$.

Let now
$\pi^{\rho,\upsilon} : \mA \to \mB(\mH)$, $\mB(\mH) = ( B(H) , \ad , \ad_\upsilon , d\upsilon )$,
denote a covariant representation. Then by the second of (\ref{eq.covrep2}) $\pi^\rho$ is a covariant
representation of $(A,\alpha)$, and by \cite[Prop.2.23]{Wil} we have the integrated representation
\[
\pi^{\rtimes \rho} : A \rtimes^\alpha G \to B(H)
\ \ , \ \
\pi^{\rtimes \rho}(h) \, := \, \int_G \pi(h(g)) \, \rho(g) \, dg
\ \ , \ \
h \in C_c(G,A)
\ .
\]
By \cite[Prop.2.34]{Wil}, $\pi^{\rtimes \rho}$ extends to $U(A \rtimes^\alpha G)$, with 
$\pi^{\rtimes \rho}( \bar\gamma(l,m) ) = \rho( \gamma(l,m) )$.
Iterating this procedure, we set
\begin{equation}
\label{eq.int-rep}
\pi^{\rtimes \rho,\upsilon}(f) \, := \, \int_\Pi \pi^{\rtimes \rho}(f(l)) \, \upsilon_l \, dl
\, \in B(H) \ , \ \ \ 
f \in C_c(\Pi,A \rtimes^\alpha G) 
\ .
\end{equation}
Now,
\[
\begin{array}{lcl}
\pi^{\rtimes \rho,\upsilon}(f) \, \ \pi^{\rtimes \rho,\upsilon}(f')
& = &
\int_{\Pi \times \Pi}  
\pi^{\rtimes \rho}(f(m)) \, \upsilon_m \, 
\pi^{\rtimes \rho}(f'(l)) \, \upsilon_l \, dmdl
= \\ \\ & = &
\int_{\Pi \times \Pi}
\pi^{\rtimes \rho}(f(m)) \, 
\ad \upsilon_m(\pi^{\rtimes \rho}(f'(l))) \, 
d\upsilon(m,l) \, \upsilon_{ml} \, 
dmdl 
= \\ \\ & = &
\int_{\Pi \times \Pi}
\pi^{\rtimes \rho}(f(m)) \, 
\pi^{\rtimes \rho}(\bar\nu_m(f'(m^{-1}l'))) \, 
d\upsilon(m,m^{-1}l') \, \upsilon_{l'} \, 
dmdl'
\ ,
\end{array}
\]
whilst
\[
\begin{array}{lcl}
\pi^{\rtimes \rho,\upsilon}(f \times f')
& = &
\int_{\Pi \times \Pi}
\pi^{\rtimes \rho}(f(m)) \, 
\pi^{\rtimes \rho}(\bar\nu_m(f'(m^{-1}l))) \, \rho(\gamma(m,m^{-1}l)) \, 
\upsilon_l \, 
dmdl
\, ,
\end{array}
\]
thus (\ref{eq.int-rep}) preserves the product when $\pi^{\rho,\upsilon}$ is spatial.
Analogously, using the fact that (\ref{eq.tds}) implies 
${\bar\nu}_l^{-1} =$ ${\bar\nu}_{l^{-1}} \circ \ad \bar\gamma(l,l^{-1})^*$,
we find
\[
\begin{array}{lcl}
\pi^{\rtimes \rho,\upsilon}(f)^*
& = &
\int_\Pi \upsilon_l^* \, \pi^{\rtimes \rho}(f(l))^*  \, dl
\, = \, 
\int_\Pi \pi^{\rtimes \rho}({\bar\nu}_l^{-1}(f(l)))^* \, \upsilon_l^* \, dl \, 
= \\ \\ & = &
\int_\Pi \pi^{\rtimes \rho}({\bar\nu}_{l^{-1}}^{-1}(f(l^{-1})))^* \, \upsilon_{l^{-1}}^* \, dl
\, = \, 
\int_\Pi \pi^{\rtimes \rho}( \ad \bar\gamma(l,l^{-1}) \circ \bar\nu_l (f(l^{-1})))^* \, \upsilon_{l^{-1}}^* \, dl \, 
= \\ \\ & = &
\int_\Pi  \pi^{\rtimes \rho}(\bar\gamma(l,l^{-1}))^* \, \pi^{\rtimes \rho}(\bar\nu_l (f(l^{-1})))^* \, 
\pi^{\rtimes \rho}(\bar\gamma(l,l^{-1})) \, \upsilon_{l^{-1}}^* \, dl \, ;
\end{array}
\]
if $\pi^{\rho,\upsilon}$ is spatial then 
$\pi^{\rtimes \rho}(\bar\gamma(l,l^{-1})) =$ $\upsilon_l \upsilon_{l^{-1}}$,
and the * is preserved,
\[
\pi^{\rtimes \rho,\upsilon}(f)^* 
\ = \ 
\int_\Pi  \pi^{\rtimes \rho}(\bar\gamma(l,l^{-1}))^* \, \pi^{\rtimes \rho}(\bar\nu_l (f(l^{-1})))^* \, 
\upsilon_l \, dl 
\ = \
\int_\Pi \pi^{\rtimes \rho}(f^*(l)) \, \upsilon_l \, dl 
\, .
\]
We note that the image of $C_c(\Pi,A \rtimes^\alpha G)$ through $\pi^{\rtimes \rho,\upsilon}$ is a *-algebra 
(associative and with anti-multiplicative involution),
thus when $\pi^{\rho,\upsilon}$ is spatial $\pi^{\rtimes \rho,\upsilon}$ kills the terms preventing $C_c(\Pi,A \rtimes^\alpha G)$ from being a *-algebra.
We have proved:
\begin{lem}
\label{lem.C3.4}
Let $\mA = (A,\alpha_0,\alpha_1\circ\bs\nu,\gamma)$ with structure 2--group ${}^2G$
and $\pi^{\rho,\upsilon} : \mA \to \mB(\mH)$ a covariant representation.
Then (\ref{eq.int-rep}) is a linear map, and a *-morphism when $\pi^{\rho,\upsilon}$ is spatial.
\end{lem}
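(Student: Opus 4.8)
The plan is to observe that the statement merely records the computations carried out in the paragraphs immediately preceding it, so the proof reduces to organising those computations and spelling out where the spatiality hypothesis enters. First I would note that $\pi^{\rtimes\rho,\upsilon}$ in (\ref{eq.int-rep}) is linear by construction: it is obtained by integrating $l\mapsto \pi^{\rtimes\rho}(f(l))\,\upsilon_l$ against the fixed (left) Haar measure of $\Pi$, and both $f\mapsto f(l)$ and the integrated representation $\pi^{\rtimes\rho}$ are linear; the standing assumptions ($A$ separable, $\Pi$ — and, when $\mA$ is not inner, $G$ — second countable) guarantee that the integrand is measurable and the integral well defined, so no further work is needed here. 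When $\mA$ is inner one simply works with $A$ in place of $A\rtimes^\alpha G$ throughout.

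For the $*$-morphism claim under spatiality I would proceed in two steps. To get $\pi^{\rtimes\rho,\upsilon}(f)\,\pi^{\rtimes\rho,\upsilon}(f') = \pi^{\rtimes\rho,\upsilon}(f\times f')$ I would expand the left-hand side as a double integral over $\Pi\times\Pi$, push $\upsilon_m$ past $\pi^{\rtimes\rho}(f'(l))$ using the covariance relation $\ad\upsilon_m\circ\pi^{\rtimes\rho} = \pi^{\rtimes\rho}\circ\bar\nu_m$ (which holds for the inner twisted action $\bar\nu$ built in the proof of Lemma \ref{lem.C3.3}) together with $\upsilon_m\upsilon_l = d\upsilon(m,l)\,\upsilon_{ml}$, substitute $l' := ml$ and use invariance of Haar measure. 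Comparing with the definition (\ref{eq.conv1}) of $f\times f'$, the only residual discrepancy is between $d\upsilon(m,m^{-1}l')$ and $\pi^{\rtimes\rho}(\bar\gamma(m,m^{-1}l')) = \rho(\gamma(m,m^{-1}l'))$, and this vanishes exactly because spatiality (\ref{eq.covrep1}) says $d\upsilon = \rho\circ\gamma$. For the involution I would use that (\ref{eq.tds}) applied to $\bar\nu$ gives $\bar\nu_l^{-1} = \bar\nu_{l^{-1}}\circ\ad\bar\gamma(l,l^{-1})^*$, expand $\pi^{\rtimes\rho,\upsilon}(f)^* = \int_\Pi \upsilon_l^*\,\pi^{\rtimes\rho}(f(l))^*\,dl$, move $\upsilon_l^*$ through by covariance, change variable $l\mapsto l^{-1}$, and then replace $\pi^{\rtimes\rho}(\bar\gamma(l,l^{-1}))$ by $\upsilon_l\upsilon_{l^{-1}}$ (once more spatiality, through $\pi^{\rtimes\rho}(\bar\gamma(l,l^{-1})) = \rho(\gamma(l,l^{-1})) = d\upsilon(l,l^{-1}) = \upsilon_l\upsilon_{l^{-1}}$); matching against (\ref{eq.conv2}) then identifies the result with $\pi^{\rtimes\rho,\upsilon}(f^*)$.

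The one genuinely delicate point — and where I would concentrate the care — is the passage to the inner twisted system when $\mA$ is not already inner: one must first invoke Lemma \ref{lem.C3.3} to replace $A$ by $A\rtimes^\alpha G$ and $\nu,\gamma$ by the inner twisted pair $\bar\nu,\bar\gamma$, observe (via \cite[Prop.2.34]{Wil}) that $\pi^{\rtimes\rho}$ extends to the multiplier algebra so that $\pi^{\rtimes\rho}(\bar\gamma(l,m))$ is defined and equals $\rho(\gamma(l,m))$, and note that after the normalisation of Remark \ref{rem.gamma} the pair $\bar\gamma$ still satisfies the third of (\ref{eq.B3.4}) (this is all that the present computations require; associativity and anti-multiplicativity of $\times$ and $*$ on $C_c(\Pi,A\rtimes^\alpha G)$ are \emph{not} needed). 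Granting this, the two displayed computations go through verbatim, and comparing images shows that $\pi^{\rtimes\rho,\upsilon}$ annihilates precisely the terms obstructing $C_c(\Pi,A\rtimes^\alpha G)$ from being an honest $*$-algebra, which yields the lemma.
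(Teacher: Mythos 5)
Your proposal is correct and follows essentially the same route as the paper: the paper's ``proof'' is precisely the computation in the paragraph preceding the lemma (the paper states ``We have proved:'' immediately before it), namely expanding the double integral, pushing $\upsilon_m$ through via covariance, substituting $l'=ml$, and observing that the residual factor $d\upsilon(m,m^{-1}l')$ matches $\rho(\gamma(m,m^{-1}l'))$ exactly under the spatiality condition (\ref{eq.covrep1}), with the analogous manipulation for the involution using $\bar\nu_l^{-1}=\bar\nu_{l^{-1}}\circ\ad\bar\gamma(l,l^{-1})^*$. Your added remarks on the reduction to the inner system via Lemma~\ref{lem.C3.3} and the multiplier extension of $\pi^{\rtimes\rho}$ correctly identify the hypotheses the paper relies on implicitly.
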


\begin{rem}[Integrated maps]
\label{rem.intmap}
Let $B$ be a $C^*$-algebra and $v : \Pi \to U(B)$ a unital Borel map.
We consider a morphism
$\iota^u : \mA \to \mM(\mB)$
with $\mM(\mB) := (M(B),\ad,\ad_v,dv)$ and $\iota : A \to B$ non-degenerate.
Then
$\ad u_g \circ \eta = \eta \circ \alpha_g$
for all $g \in G$, and by universality of $A \rtimes^\alpha G$, \cite[Prop.2.39]{Wil}, there is a *-morphism
$\iota^{\rtimes u} : A \rtimes^\alpha G \to M(B)$.
Defining 
\[
\iota^{\rtimes u,v}(f) \, := \, \int_\Pi \iota^{\rtimes u}(f(l)) \, v_l \, dl 
\ \ \ , \ \ \ 
f \in C_c(\Pi,A \rtimes^\alpha G)
\ ,
\]
we obtain a linear map
$\iota^{\rtimes u,v} : C_c(\Pi,A \rtimes^\alpha G) \to M(B)$.
We call $\iota^{\rtimes u,v}$ the \emph{integrated map of $\iota^u$}.
\end{rem}

\paragraph{Crossed products by twisted actions.} 
Maintaining the assumptions of the previous paragraph, we proceed with the construction of the crossed product of a twisted $C^*$-dynamical system.

A \emph{crossed product} of $\mA$ is given by a $C^*$-algebra $B$ with a unital Borel map 
$\tbv : \Pi \to U(B)$,
fulfilling the following properties:
\begin{itemize}
\item[(\textbf{cp1})] There is a morphism
      $\tbi^\tbu : \mA \to \mM(\mB)$,
      where $\mM(\mB) := (M(B),\ad,\ad_\tbv,d\tbv)$ and $\tbi : A \to B$ is non-degenerate.
      The associated integrated map $\tbi^{\rtimes \tbu,\tbv}$ is a *-morphism when $(\bs\nu,\gamma) \in \check{Z}^2(\Pi,{}^2G)$.
\item[(\textbf{cp2})] $\tbi^{\rtimes \tbu,\tbv}$ takes values in $B$ (rather than, more generally, in $M(B)$),
     and the *-algebra generated by $\tbi^{\rtimes \tbu,\tbv}(C_c(\Pi,A \rtimes^\alpha G))$ is dense in $B$.
\item[(\textbf{cp3})] For any covariant representation $\pi^{\rho,\upsilon}$ of $\mA$ over a Hilbert space $H$, there is a representation 
      $\eta : B \to B(H)$
      such that, for all $l,m \in \Pi$ and $g \in G$,
      \begin{equation}
      \label{eq.cp2}
      \eta \circ \tbi \, = \, \pi
      \ \ \ , \ \ \ 
      \eta(\tbv_l) \, = \, \upsilon_l
      \ \ \ , \ \ \ 
      \eta(\tbu_g) \, = \, \rho(g) \ .
      \end{equation}
\end{itemize}

Note that (\textbf{cp1}) says that $\tbu : G \to U(B)$ is Borel, and the relations
\begin{equation}
\label{eq.cp}
\ad \tbv_l \circ \tbi \, = \, \tbi \circ \nu_l
\ \ \ , \ \ \ 
\ad \tbu_g \circ \tbi \, = \, \tbi \circ \alpha_g
\ ,
\end{equation}
hold for all $l \in \Pi$ and $g \in G$. 
Instead, a \emph{spatial} crossed product of $\mA$ is a $C^*$-algebra $B'$ fulfilling (\textbf{cp1}) - (\textbf{cp3})
with the conditions that $\tbi^\tbu$ must be spatial, 
\begin{equation}
\label{eq.cp2s}
d\tbv(l,m) \, := \, \tbv_l \tbv_m \tbv_{lm}^* \, = \, \tbu(\gamma(l,m))  \ \ , \ \ l,m \in \Pi \ ,
\end{equation}
and that (\textbf{cp3}) holds for any \emph{spatial} covariant representation $\pi^{\rho,\upsilon}$.
In this case, we require that $\tbi^{\rtimes \tbu,\tbv}$ is a *-morphism even if $(\bs\nu,\gamma) \notin \check{Z}^2(\Pi,{}^2G)$.

\begin{thm}
\label{thm.cp}
Let $\mA = (A,\alpha,\nu,\gamma)$ with structure 2--group ${}^2G = (G \tto N)$.
Then there are, unique up to isomorphism, a crossed product and a spatial crossed product denoted by, respectively,
\[
A \rtimes^{\alpha,\nu} (G,\Pi)
\ \ \ , \ \ \ 
A \rtimes^{\alpha,\nu,\gamma} (G,\Pi)
\ .
\]
\end{thm}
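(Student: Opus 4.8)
The plan is to follow the standard route to (twisted) crossed products, now carried out on the convolution algebra $C_c(\Pi,D)$ built above, where $D:=A\rtimes^\alpha G$ (or $D:=A$ when $\mA$ is already inner) so that the $G$-action has been made inner by the canonical implementing unitaries $\phi:G\to U(M(D))$ and the twisted inner action $(\bar\nu,\bar\gamma)$ of $\Pi$ lives on $D$; by Remark~\ref{rem.gamma} I may and do assume $\gamma(l,1)=\gamma(1,l)=1$. The two ingredients already in hand are: every covariant representation $\pi^{\rho,\upsilon}$ of $\mA$ produces an integrated map $\pi^{\rtimes\rho,\upsilon}:C_c(\Pi,D)\to B(H)$ (Remark~\ref{rem.intmap}, (\ref{eq.int-rep})), linear in general and a $*$-morphism when $\pi^{\rho,\upsilon}$ is spatial (Lemma~\ref{lem.C3.4}); and a faithful \emph{spatial} covariant representation exists, obtained by composing a faithful representation of $D$ with the regular representation of Example~\ref{ex.regrep} (its case~(1) when $(\bs\nu,\gamma)$ is not a cocycle).

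First I would construct the \emph{spatial} crossed product. On $C_c(\Pi,D)$ set
\[
\|f\|_\gamma \, := \, \sup\bigl\{\ \|\pi^{\rtimes\rho,\upsilon}(f)\|\ : \ \pi^{\rho,\upsilon}\ \text{a spatial covariant representation of}\ \mA\ \bigr\}\ ;
\]
this is finite (by $\|\pi^{\rtimes\rho,\upsilon}(f)\|\le\int_\Pi\|f(l)\|\,dl$), a $C^*$-seminorm by Lemma~\ref{lem.C3.4}, and a norm since the regular representation above is faithful. Let $A\rtimes^{\alpha,\nu,\gamma}(G,\Pi)$ be the Hausdorff completion. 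Then $D$ acts on the dense copy of $C_c(\Pi,D)$ by left convolution and the $\Pi$-shifts act by unitary multipliers, so that $\tbi$ (the inclusion $A\subseteq M(D)$), the implementing unitaries $\tbu_g$ ($=\phi(g)$) and the $\tbv_l$ extend to the multiplier algebra and satisfy (\ref{eq.cp}), (\ref{eq.cp2s}); moreover the induced integrated map $\tbi^{\rtimes\tbu,\tbv}$ is identified with the canonical dense embedding, yielding (\textbf{cp1}) and (\textbf{cp2}). For (\textbf{cp3}), a spatial covariant representation $\pi^{\rho,\upsilon}$ gives a $\|\cdot\|_\gamma$-contractive $*$-morphism $\pi^{\rtimes\rho,\upsilon}$ which extends to $\eta$ on $A\rtimes^{\alpha,\nu,\gamma}(G,\Pi)$, and (\ref{eq.cp2}) is read off the generators. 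Uniqueness is the usual argument: given another $B''$ with the same properties, pick a faithful representation of $A\rtimes^{\alpha,\nu,\gamma}(G,\Pi)$, restrict it through $\tbi,\tbu,\tbv$ to a spatial covariant representation of $\mA$, feed it into (\textbf{cp3}) for $B''$, and use (\textbf{cp2}) on both algebras to see that the resulting $*$-morphism is an isomorphism.

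Next I would treat the \emph{full} crossed product $A\rtimes^{\alpha,\nu}(G,\Pi)$. When $(\bs\nu,\gamma)\in\check{Z}^2(\Pi,{}^2G)$ the space $C_c(\Pi,D)$ is a genuine $*$-algebra and $(D,\ad,\bar\nu,\bar\gamma)$ is of Busby--Smith type, every integrated map is a $*$-morphism, and repeating the previous step with the supremum taken over \emph{all} covariant representations produces the object --- this is the Busby--Smith crossed product, cf.\ \cite[Def.2.4]{PR89}, \cite{BS70}. When $(\bs\nu,\gamma)$ is not a cocycle, $C_c(\Pi,D)$ is only an algebra with idempotent involution and the integrated maps are merely linear, so one cannot pass to the enveloping $C^*$-algebra of a Banach $*$-algebra; instead I would fix a universal covariant representation $\pi_u^{\rho_u,\upsilon_u}$ (the direct sum of a set of representatives of covariant representations on separable Hilbert spaces, a set by the standing separability and second countability hypotheses) and \emph{define} $A\rtimes^{\alpha,\nu}(G,\Pi)$ to be the $C^*$-subalgebra of $B(H_u)$ generated by $\pi_u^{\rtimes\rho_u,\upsilon_u}(C_c(\Pi,D))$. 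Covariance moves these generators within the algebra, so $\tbi,\tbu,\tbv$ are multipliers and (\ref{eq.cp}) holds, giving (\textbf{cp1})--(\textbf{cp2}); every covariant representation is, up to an amplification, a subrepresentation of $\pi_u^{\rho_u,\upsilon_u}$, hence factors through $A\rtimes^{\alpha,\nu}(G,\Pi)$, giving (\textbf{cp3}); and uniqueness is again the argument of the previous step.

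The routine part is the cocycle case, which collapses to known constructions. The main obstacle is the non-cocycle case for the full crossed product: since $C_c(\Pi,D)$ is then non-associative with a non-anti-multiplicative involution, the algebra must be built directly from covariant representations rather than as a completion of a $*$-algebra, and one must check that $\tbi$, $\tbu$, $\tbv$ still extend to multipliers (using non-degeneracy of $\tbi$) and that the defining clause of (\textbf{cp1}) --- $\tbi^{\rtimes\tbu,\tbv}$ being a $*$-morphism exactly when $(\bs\nu,\gamma)$ is a cocycle --- together with (\textbf{cp2})--(\textbf{cp3}) really pins the object down up to isomorphism. A secondary difficulty, needed already in the spatial case, is the supply of \emph{faithful} spatial covariant representations making $\|\cdot\|_\gamma$ a norm, which is precisely what the regular representation of Example~\ref{ex.regrep} provides.
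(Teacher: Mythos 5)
Your construction is essentially the paper's: a universal family of (spatial) cyclic covariant representations, the integrated maps of Lemma~\ref{lem.C3.4}, the $C^*$-subalgebra of the direct-sum representation generated by the image of $C_c(\Pi,A\rtimes^\alpha G)$, and the argument of \cite[Prop.2.7]{PR89} for (\textbf{cp3}) and uniqueness. Your Hausdorff-completion phrasing in the spatial case and your case split (cocycle vs.\ non-cocycle) for the full crossed product are cosmetic variants of the same construction.

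There is, however, one concrete error. You assert that a \emph{faithful spatial} covariant representation exists, "obtained by composing a faithful representation of $D$ with the regular representation of Example~\ref{ex.regrep} (its case~(1) when $(\bs\nu,\gamma)$ is not a cocycle)", and you use this to conclude that $\|\cdot\|_\gamma$ is a norm. But case~(1) of Example~\ref{ex.regrep} only produces a faithful \emph{covariant} representation $\pi'^{\rho,u}$; the identity $d u(l,m)=\rho(\gamma(l,m))$ required for spatiality, i.e.\ (\ref{eq.covrep1}), is obtained there only in case~(2), when $\mA$ is of Busby--Smith type. Outside that case the regular representation contributes nothing to the supremum defining $\|\cdot\|_\gamma$, and the paper explicitly leaves open whether the spatial crossed product is non-trivial in general. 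Fortunately the theorem as stated only asserts existence and uniqueness of the universal object satisfying (\textbf{cp1})--(\textbf{cp3}), for which the spatial seminorm need not be a norm: your construction goes through with $A\rtimes^{\alpha,\nu,\gamma}(G,\Pi)$ taken as the (possibly degenerate) Hausdorff completion, exactly as in the paper. So you should delete the faithfulness claim rather than try to repair it; as written it is the only step of your argument that fails.
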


\begin{proof}
The proof is essentially the one of \cite[Prop.2.7]{PR89}, thus here we give a sketch of the argument.
We consider a set
$\mS = \{ \pi^{\rho_s,\upsilon_s}_s : \mA \to \mB(\mH_s) \}_{s \in I}$
of cyclic covariant representations of $\mA$ such that every
cyclic covariant representation of $\mA$ is equivalent to an element of $\mS$.
We have, respectively, a *-morphism, a (continuous) group morphism and a unital Borel map
\begin{equation}
\label{eq.t.cp1}
\tbi : A \to B(\oplus_s H_s)
\ \ \ , \ \ \ 
\tbu : G \to U(\oplus_s H_s)
\ \ \ , \ \ \ 
\tbv : \Pi \to U(\oplus_s H_s)
\ ,
\end{equation}
defined by 
$\tbi(t) := \{ \oplus_s \pi_s(t) \}$, $\tbu_g := \{ \oplus_s \rho_s(g) \}$ and $\tbv_l := \{ \oplus_s \upsilon_s(l) \}$.
By construction, (\ref{eq.cp}) holds.
By well-known properties of group representations, $\tbu$ and $\tbv$ extend to maps
\[
\tbu(w) := \int_G w(g) \tbu_g dg
\ \ \ , \ \ \ 
\tbv(w') := \int_\Pi w'(l) \tbv_l dl
\ ,
\]
for all $w \in C_c(G)$, $w' \in C_c(\Pi)$, 
with $\| \tbu(w) \| \leq \| w \|_1$ and $\| \tbv(w') \| \leq \| w' \|_1$.
Note that $\tbu$ is a *-morphism on $C_c(G)$ with respect to convolution and *, 
whilst $\tbv$ in general is only linear on $C_c(\Pi)$ due to the considerations of the previous paragraph.
Then we define $B$ as the $C^*$-subalgebra of $B(\oplus_s H_s)$ generated by operators of the type
\[
\tbv(w') \, \tbi(t) \, \tbu(w)
\ \ \ , \ \ \ 
t \in A \, , \, w \in C_c(G)  \, , \, w' \in C_c(\Pi)
\ .
\]
By (\ref{eq.cp}), elements of $\tbv(\Pi)$, $\tbi(A)$, $\tbu(G)$, and their products, 
are elements of $B$ or multipliers of $B$.
Thus, if $h \in C_c(G,A)$, then
$\tbi^{\rtimes \tbu}(h) :=$ $\int_G \tbi(h(g)) \tbu_g \, dg$
belongs to $M(B)$.
By density, we extend $\tbi^{\rtimes \tbu}$ to a *-morphism
\begin{equation}
\label{eq.t.cp2}
\tbi^{\rtimes \tbu} : A \rtimes^\alpha G \to M(B) \ .
\end{equation}
If $f \in C_c(\Pi,A \rtimes^\alpha G)$, then we define
\begin{equation}
\label{eq.t.cp3}
\tbi^{\rtimes \tbu,\tbv}(f) \, := \, \int_\Pi \tbi^{\rtimes \tbu}(f(l)) \, \tbv_l \, dl 
\ \ \ , \ \ \ 
f \in C_c(\Pi,A \rtimes^\alpha G)
\ .
\end{equation}
Since $f$ is limit of terms of the type 
$\sum_k w'_k \otimes t_k \otimes w_k $, with $w'_k \in C_c(\Pi)$, $t_k \in A$, $w_k \in C_c(G)$,
with $\tbi^{\rtimes \tbu,\tbv}(w'_k \otimes t_k \otimes w_k) =$ $\tbv(w'_k) \tbi(t_k) \tbu(w_k) \in$ $B$,
and since $\| f \|_1 \leq \| \tbi^{\rtimes \tbu,\tbv}(f) \|$,
we have $\tbi^{\rtimes \tbu,\tbv}(f) \in B$.
Again, in general $\tbi^{\rtimes \tbu,\tbv}$ does not define a *-morphism.
The argument of the proof of \cite[Prop.2.7]{PR89} now applies with the obvious modifications, 
and $B$, with $\tbi , \tbu , \tbv$, is the desired crossed product $A \rtimes^{\alpha,\nu} (G,\Pi)$.

The argument for the construction of the spatial crossed product is analogous, by considering a set
$\mS' = \{ \pi^{\rho_s,\upsilon_s}_s : \mA \to \mB(\mH_s) \}_{s \in I}$
of \emph{spatial} cyclic covariant representations.
Note that in this case by Lemma \ref{lem.C3.4} the map (\ref{eq.t.cp3}) is always a *-morphism.
\end{proof}

Some remarks follow.
\begin{itemize}
\item With the notation of the above proof we have $\mS' \subseteq \mS$, thus the $C^*$-norm on $C_c(\Pi,A \rtimes^\alpha G)$
      induced by $\mS'$ is $\leq$ the $C^*$-norm induced by $\mS$. We conclude that there is a *-morphism
      \[
      A \rtimes^{\alpha,\nu} (G,\Pi)
      \, \to \,  
      A \rtimes^{\alpha,\nu,\gamma} (G,\Pi)
      \ .
      \]
\item If $\mA = (A,\ad,\nu,\gamma)$ is inner then the argument of the previous proof applies with $A$ in place of
      $A \rtimes^\alpha G$. Of course, we do not need to define the integrated representation (\ref{eq.t.cp2}).
      The crossed product and the spatial crossed product are written 
      $A \rtimes^\nu \Pi$ and $A \rtimes^{\nu,\gamma} \Pi$
      respectively. Note that at least $A \rtimes^\nu \Pi$ is not trivial, 
      because we have the regular representation Example \ref{ex.regrep} that we compose with a representation of $B(X)$.
      At the present time, we do not have results ensuring non-triviality of the spatial crossed product,
      anyway when $\mA$ is Busby-Smith $A \rtimes^{\nu,\gamma} \Pi$ agrees with the crossed product \cite[Def.2.4]{PR89}
      and is non-trivial.
      %
      %
\item For a generic system $\mA = (A,\alpha,\nu,\gamma)$ with structure 2--group ${}^2G$, Lemma \ref{lem.C3.3} ensures 
      that there is a morphism $\eta^\phi : \mA \to \mB$ with $\mB$ inner and $\eta$ faithful,
      thus by the previous point $A \rtimes^{\alpha,\nu} (G,\Pi)$ is not trivial. 
      What is not ensured is the fact that $\phi$ is faithful, thus in general we do not know whether 
      the maps $\tbu$ and, as consequence, $\tbi^{\rtimes \tbu}$ are always faithful.
      Of course, this turns out to be the case when $\alpha$ is faithful (Lemma \ref{lem.C3.2} and Lemma \ref{lem.C3.3}).
      
      For the same reason, $A \rtimes^{\alpha,\nu,\gamma} (G,\Pi)$ is not trivial 
      when $\mA$ admits a non-trivial morphism into a Busby-Smith system.
      This is the case
      when $\alpha$ is faithful (Lemma \ref{lem.C3.2} and Lemma \ref{lem.C3.3}),
      when $(\bs\nu,\gamma) \in \check{Z}^2(\Pi,{}^2G)$ (Lemma \ref{lem.C3.3}),
      or when $G$ is compact (Corollary \ref{cor.C3.3}).
\end{itemize}

\begin{rem}
\label{rem.BMZ3}
Let $\mA = (A,\alpha,\nu,\gamma)$ with structure 2--group ${}^2G = (G \tto N)$
and $G$ locally compact.
By Remark \ref{rem.BMZ2}, we have the Green $C^*$-dynamical system
$\beta : {}^2G_{\bs\nu} \to {}^2\Aut (A \rtimes^\alpha G)$.
Let now 
$\eta : A \to M(A \rtimes^\alpha G)$, $\phi : G \to U(A \rtimes^\alpha G)$
denote the canonical morphisms.
We consider covariant (non-degenerate) representations of $\beta$ in the sense of \cite{BMZ12}, 
given by pairs
$\pi : A \rtimes^\alpha G \to B(H)$,
$V : G \vee_{\bs\nu} \Pi \to U(H)$,
such that
\[
\pi \circ \beta(\bs\nu_l) \, = \, \ad V(\bs\nu_l) \circ \pi
\ \ \ , \ \ \ 
V(\unl{g}) \, = \, \pi(\phi_g) \, ,
\]
$g \in G$, $l \in \Pi$.
The above relations imply that 
$\{ \pi \circ \eta \}^{ \pi \circ \phi , V \circ \bs\nu }$
is a covariant representation of $\mA$. Since
$V(\bs\nu_l \bs\nu_m \bs\nu_{lm}^{-1}) =$
$V(\unl{\gamma(l,m)}) =$
$\pi(\phi(\gamma(l,m)))$,
the above representation is spatial.
Thus taking the seminorm on $A \rtimes^{\alpha,\nu,\gamma} (G,\Pi)$ defined by these representations yields a surjective *-morphism
\[
A \rtimes^{\alpha,\nu,\gamma} (G,\Pi) \to (A \rtimes^\alpha G) \rtimes_\beta {}^2G_{\bs\nu} \, ,
\]
where the crossed product $\rtimes_\beta$ is in the sense of \cite{BMZ12}.
A more general version of this crossed product in the setting of $C^*$-correspondences has been discussed in \cite{BM17}.
\end{rem}

\subsection{Twisted $C^*$-dynamical systems as holonomy maps}
\label{sec.C4}

Needless to say, our notion of twisted $C^*$-dynamical system is designed to give the twisted holonomy of a bundle $C^*$-gerbe,
with the idea that $\Pi$ is the fundamental group.

Throughout this section we assume that $\Delta$ is a poset such that $\pi_1(\Delta)$ is countable.
This covers the cases in which we are interested, where $\Delta$ is a good base of a manifold.
Thus we endow $\pi_1(\Delta)$ with the discrete topology, making it a locally compact group.

Given a topological group $G$, we consider the category $\Cgrb^u_{\Delta,G}$
having objects bundle $C^*$-gerbes over $\Delta$ with fibres $G$-dynamical systems;
the superscript $u$ indicates that for the moment we restrict our attention to untwisted morphisms
$\eta : \check{\mA} \to \check{\mA}'$,
such that the terms $\beta_b$ in (\ref{eq.B2.1a}) are the identity;
this allows us to state the next result in all its conceptual simplicity.
\begin{thm}
\label{thm.C4.1}
Let $\Delta$ be a poset with countable fundamental group $\Pi := \pi_1(\Delta)$ and $G$ a locally compact group.
Then assigning the twisted holonomy to bundle $C^*$-gerbes over $\Delta$ defines a functor
\begin{equation}
\label{eq.thm.C4.1}
\Cgrb^u_{\Delta,G} \, \to \, \tdyn_{\Pi,G} \ .
\end{equation}
\end{thm}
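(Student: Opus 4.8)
The plan is to fix enough combinatorial data to promote the cohomological twisted holonomy of (\ref{eq.B3.3}) to a genuine object of $\tdyn_{\Pi,G}$, and then to check that an untwisted gerbe morphism induces a morphism of twisted $C^*$-dynamical systems with $\phi=\mathrm{id}$ — which is why the full subcategory $\tdyn_{\Pi,G}$, not $\tdyn_\Pi$, is the right target. First I would choose once and for all a base point $a_0\in\Delta$, a path frame $p_{a_0}=\{p_{a_0 o}\}_o$ as in (\ref{eq.pf}), a section $s:\pi_1(\Delta)\to\Pi_1^{a_0}(\Delta)$ with $s(1)=\iota_{a_0}$, and a set-theoretic section $\lambda:\alpha_{a_0}(G)\to G$ of the epimorphism $\alpha_{a_0}$ with $\lambda(\mathrm{id})=1$; since $\Pi:=\pi_1(\Delta)$ is countable we regard it as a discrete (hence locally compact, second countable) group, so that every map out of $\Pi$ or $\Pi\times\Pi$ is automatically Borel. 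Given a bundle $C^*$-gerbe $\check{\mB}=(B,\jmath)_\Delta$ with fibres $G$-dynamical systems, \propref{prop.B3.1} produces the twisted connection $\tau^{a_0}=(\jmath^{\,a_0},\delta^{a_0})\in\check{U}^2(\Delta,{}^2G_{a_0,\hhol})$, and the holonomy construction preceding \propref{prop.B1.1}, applied with $u=\jmath^{\,a_0}$, yields $\bar{\jmath}^{\,a_0}_l:=\jmath^{\,a_0}_{s(l)}\in\Hol_{a_0}\check{\mB}$ together with $\gamma^{a_0}(l,m):=\bar{\jmath}^{\,a_0}_l\circ\bar{\jmath}^{\,a_0}_m\circ(\bar{\jmath}^{\,a_0}_{lm})^{-1}\in\alpha_{a_0}(G)$, see (\ref{eq.B3.5}). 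I would then declare
\[
F(\check{\mB}) \ := \ (\, B_{a_0} \,,\, \alpha_{a_0} \,,\, \bar{\jmath}^{\,a_0} \,,\, \lambda\circ\gamma^{a_0} \,) \ .
\]
This is a twisted $C^*$-dynamical system over $G,\Pi$: $\bar{\jmath}^{\,a_0}$ is Borel with values in $\Hol_{a_0}\check{\mB}\subseteq NB_{a_0}=N_{\alpha_{a_0}}B_{a_0}$; $\bar{\jmath}^{\,a_0}_1=\mathrm{id}$ by (\ref{eq.B3.5}); and $\bar{\jmath}^{\,a_0}_l\circ\bar{\jmath}^{\,a_0}_m=\alpha_{a_0}((\lambda\circ\gamma^{a_0})(l,m))\circ\bar{\jmath}^{\,a_0}_{lm}$ because $\alpha_{a_0}\circ\lambda=\mathrm{id}$ on $\alpha_{a_0}(G)$.

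On arrows, an untwisted gerbe morphism $\eta:\check{\mB}\to\check{\mB}'$ (the factors $\beta_b$ in (\ref{eq.B2.1a}) being the identity) is sent to $F(\eta):=\eta_{a_0}:B_{a_0}\to B'_{a_0}$. It is $G$-equivariant, which gives the second relation in (\ref{eq.C3.1}). For the first, I would upgrade the intertwining relation $\eta_{\bo}\circ\jmath_b=\jmath'_b\circ\eta_{\bl}$ from $b\in N_1(\Delta)$ to all $b\in\Sigma_1(\Delta)$: writing $\jmath_b=\jmath_{|b|\bo}^{-1}\circ\jmath_{|b|\bl}$ and likewise for $\jmath'$ (valid since we deal with bundle gerbes) and composing the two $N_1(\Delta)$-relations at the vertex $|b|$ with the inverses of the invertible maps $\jmath_{|b|\bullet}$, $\jmath'_{|b|\bullet}$, one obtains $\eta_{\bo}\circ\jmath_b=\jmath'_b\circ\eta_{\bl}$ on $\Sigma_1(\Delta)$, hence $\eta_o\circ\jmath_p=\jmath'_p\circ\eta_a$ for every path $p:a\to o$. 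Specialising to the loops $\ell_b$ of (\ref{eq.ellb}) gives $\eta_{a_0}\circ\jmath^{\,a_0}_b=\jmath'^{\,a_0}_b\circ\eta_{a_0}$, and since $\bar{\jmath}^{\,a_0}_l=\jmath^{\,a_0}_{b_n}\circ\cdots\circ\jmath^{\,a_0}_{b_1}$ with $s(l)=b_n*\ldots*b_1$ — the same decomposition being used for $\check{\mB}'$ because $p_{a_0}$ and $s$ are fixed globally — one propagates $\eta_{a_0}$ through the composition factor by factor to get $\eta_{a_0}\circ\bar{\jmath}^{\,a_0}_l=\bar{\jmath}'^{\,a_0}_l\circ\eta_{a_0}$. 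Thus $F(\eta)$ satisfies (\ref{eq.C3.1}) with $\phi=\mathrm{id}$ and is a morphism in $\tdyn_{\Pi,G}$; the compatibility (\ref{eq.C3.1a}) of $\eta_{a_0}$ with the twisting maps then holds automatically (alternatively, one composes the cocycle law with the automorphism $(\bar{\jmath}^{\,a_0}_{lm})^{-1}$).

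Functoriality is then routine: $F(\mathrm{id}_{\check{\mB}})=\mathrm{id}_{B_{a_0}}$, the composite of two untwisted gerbe morphisms is again untwisted with $(\eta'\circ\eta)_{a_0}=\eta'_{a_0}\circ\eta_{a_0}$, whence $F(\eta'\circ\eta)=F(\eta')\circ F(\eta)$. I expect the main obstacle to be the middle step — that $\eta_{a_0}$ genuinely intertwines the two twisted holonomies: this combines the mildly technical extension of the gerbe-morphism relation from $N_1(\Delta)$ to $\Sigma_1(\Delta)$ with the bookkeeping that the holonomies of $\check{\mB}$ and $\check{\mB}'$ are assembled from identical combinatorial data $(a_0,p_{a_0},s)$, so that the (not necessarily invertible) $\eta_{a_0}$ can still be carried through the loop decompositions. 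A secondary, easily handled point is that the conclusion does not depend on the auxiliary choices: the lift $\lambda$ only affects the twisting map, which morphisms in $\tdyn_{\Pi,G}$ do not see directly and which by Remark~\ref{rem.gamma} is in any case fixed up to isomorphism, while changing $(a_0,p_{a_0},s)$ replaces $F$ by a naturally isomorphic functor via \propref{prop.B3.1} and \propref{prop.B1.1}.
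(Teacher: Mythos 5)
Your proposal is correct and follows essentially the same route as the paper's proof: fix the data $(a_0,p_{a_0},s)$ and a set-theoretic section of $\alpha_{a_0}$ to lift $\gamma^{a_0}$ back into $G$, note that discreteness of the countable group $\Pi$ makes all maps Borel, and send an untwisted morphism to $\eta_{a_0}$, verifying the intertwining relation by propagating (\ref{eq.B2.1a}) (with $\beta=\mathrm{id}$) along the loop decomposition and using the second of (\ref{eq.B2.1a}) for $G$-equivariance. The only nitpick is that the section $\lambda$ must be chosen per object rather than ``once and for all,'' since $\alpha_{a_0}$ depends on the gerbe --- exactly as the paper's $\varsigma_{\check{\mA},a}$ is indexed by $\check{\mA}$ --- but this does not affect the argument.
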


\begin{proof}
Given a bundle $C^*$-gerbe $\check{\mA} = (A,\jmath)_\Delta$, we pick a standard fibre
$\alpha_a : G \to \Aut A_*$, $A_* = A_a$, and a section
$\varsigma_{ \check{\mA} , a } : \alpha_a(A_*) \to G$, $\alpha \circ \varsigma_{ \check{\mA} , a } = id$.
In this way, (\ref{eq.B3.3}) and (\ref{eq.B3.5}) yield the cocycle $( \bar{\jmath}^{\, a} , \gamma^a )$, where 
$\bar{\jmath}^{\, a}_l :=$ $\jmath_{s(l)} \in$ $\Aut A_*$ and $\gamma^a(l,m) \in \alpha_a(G)$ 
for all $l,m \in \Pi$; here,
$s(l) : a \to a$
is the choice of a path with homotopy class $l$.
We set 
$\gamma_*(l,m) := \varsigma_{ \check{\mA} , a } \circ \gamma^a(l,m) \in G$,
and note that both $\bar{\jmath}^{\, a}$ and $\gamma_*$ are obviously Borel.
Moreover, by (\ref{eq.B3.5}),
\[
\bar{\jmath}^{\, a}_1 \, = \, id
\ \ \ , \ \ \ 
\bar{\jmath}^{\, a}_l \circ \bar{\jmath}^{\, a}_m \, = \, 
\gamma^a(l,m) \circ \bar{\jmath}^{\, a}_{lm} \, = \, 
\alpha_{\gamma_*(l,m)} \circ \bar{\jmath}^{\, a}_{lm}
\ \ \ , \ \ \ 
l,m \in \Pi
\ .
\]
Thus $\mA_* = ( A_* , \alpha_a , \bar{\jmath}^{\, a} , \gamma_* )$ is a twisted $C^*$-dynamical system.

Let now $\eta : \check{\mA} \to \check{\mA}' = (A',\jmath^{ \,'})_\Delta$ be a gerbe morphism.
We set $\eta_* := \eta_a$.
Since by definition
$\bar{\jmath}^{\, a}_l = \jmath_{s(l)}$
with $s(l) : a \to a$, applying repeatedly the first of (\ref{eq.B2.1a}) with $\beta = id$, we obtain
$\eta_* \circ \jmath_{s(l)} = \jmath_{s(l)} \circ \eta_*$,
implying
$\eta_* \circ \bar{\jmath}^{\, a}_l = \bar{\jmath}^{\, a}_l \circ \eta_*$.
On the other hand, applying the second of (\ref{eq.B2.1a}) we have
$\alpha'_{a,g} \circ \eta_*  =  \eta_* \circ \alpha_{a,g}$,
$g \in G$.
Thus $\eta_* : \mA_* \to \mA'_*$ is a morphism in $\tdyn_{\Pi,G}$. 
Given a morphism $\eta' : \check{\mA}' \to \check{\mA}''$, we have 
$(\eta' \circ \eta)_* =$ $\eta'_a \circ \eta_a =$ $\eta'_* \circ \eta_*$,
and (\ref{eq.thm.C4.1}) is a functor.
\end{proof}

\begin{rem}
In the case of inner twisted $C^*$-dynamical systems we have $G = U(A)$ for a fixed $C^*$-algebra $A$,
thus (\ref{eq.thm.C4.1}) is relative to bundle $C^*$-gerbes with standard fibre $A$.
A class of such $C^*$-gerbes is defined as follows. Let
$q=(u,g) \in \check{U}^2(\Delta,{}^2\Aut A)$
be a twisted connection; then we have the bundle $C^*$-gerbe $\check{\mA}_q$ associated to $q$,
with constant fibre $A$ and gerbe structure $u_b : A \to A$, $b \in \Sigma_1(\Delta)$.
We then say that $\check{\mA}_q$ is a \emph{Busby-Smith} $C^*$-gerbe.
Busby-Smith $C^*$-gerbes give rise to "flat" Morita bundle gerbes, Ex.\ref{ex.pennig}, \cite{Pen11}.
%
%
\end{rem}


\paragraph{Generic gerbe morphisms.}
We now consider the category $\Cgrb_{\Delta,G}$ with the same objects as $\Cgrb^u_{\Delta,G}$
and arrows generic gerbe morphisms $\eta_\beta : \check{\mA} \to \check{\mA}'$, Def.\ref{def.gmor}.
Here, $\check{\mA}' = (A',\jmath^{ \,'})_\Delta$ is a bundle $q'$-$C^*$-gerbe with $q'=(u',g') \in \check{Z}^2(\Delta,{}^2G)$.
By hypothesis, given $b_1 \in \Sigma_1(\Delta)$ we have $\beta_{b_1} = \alpha'_{\partial_0b_1}(g_1) \in \Aut A'_{\partial_0b_1}$ for some $g_1 \in G$;
thus, by the second of (\ref{eq.B2.1}), for any $b_2 \in \Sigma_1(\Delta)$, $\partial_1b_2 = \partial_0b_1$, we have
\begin{equation}
\label{eq.beta}
\jmath^{ \,'}_{b_2} \circ \beta_{b_1} \, = \, \beta_{b_2,b_1} \circ \jmath^{ \,'}_{b_2}
\ \ \ , \ \ \ 
\beta_{b_2,b_1} := \alpha'_{\partial_0b_2}(\wa{u'}_{b_2}(g_1)) \ .
\end{equation}
Let $p : a \to a$, $p = b_n * \ldots * b_1$; for any $k=1,\ldots,n$ we set $\beta_{b_k} = \alpha'_{\partial_0b_k}(g_k)$. 
Using (\ref{eq.beta}) and the first of (\ref{eq.B2.1a}) we find
\[
\jmath^{ \,'}_{b_2} \circ \jmath^{ \,'}_{b_1} \circ \eta_a \, = \, 
\beta_{b_2,b_1} \circ \beta_{b_2} \circ \eta_{\partial_0b_2} \circ \jmath_{b_2} \circ \jmath_{b_1}
\ ,
\]
and, by iteration,
\begin{equation}
\label{eq.beta'}
\jmath^{ \,'}_p \circ \eta_a \, = \, \beta_p \circ \eta_a \circ \jmath_p \ ,
\end{equation}
where $\beta_p \in \alpha'_a(G)$ is given by
\begin{equation}
\label{eq.defbeta}
\beta_p 
\, := \,
\alpha'_a( \wa{u'}_{ b_n * \ldots * b_2 }(g_1) \,
          \ldots \,
          \wa{u'}_{b_n}(g_{n-1}) \,
          g_n )
\ .
\end{equation}
Given a section $\varsigma_{ \check{\mA}' , a } : \alpha'_a(G) \to G$ we set 
$\kappa_*(p) := \varsigma_{ \check{\mA}' , a }(\beta_p)$, so that $\alpha'_{\kappa_*(p)} = \beta_p$;
moreover, given the section $s : \pi_1(\Delta) \to \Pi_1^a(\Delta)$ used in Theorem \ref{thm.C4.1},
we set
$\kappa(l) := \kappa_*(s(l))$, $l \in \Pi$.
Recalling the proof of Theorem \ref{thm.C4.1}, (\ref{eq.beta'}) reads
\begin{equation}
\label{eq.beta''}
\bar{\jmath}^{\,' \, a}_l \circ \eta_* \, = \, \alpha'_{\kappa(l)} \circ \eta_* \circ \bar{\jmath}^{\, a}_l
\ \ \ , \ \ \ 
\forall l \in \Pi
\ ,
\end{equation}
which, besides the $G$-equivariance of $\eta_*$, implies that (\ref{eq.C3.mk}) is fulfilled by
\[
\{ \eta_\beta \}_* \ := \ \eta_{*,\kappa} \ .
\]
\begin{rem}
\label{rem.ek}
Once that a choice of the sections $\varsigma_{ \check{\mA}' , a }$ and $s$ is made,
the morphism $\eta_{*,\kappa}$ is uniquely determined by $\eta_a$ and the mapping $\beta$
defined by (\ref{eq.defbeta}).
\end{rem}

Thus generic gerbe morphisms induce morphisms of the type (\ref{eq.C3.mk}) defining the category $\tdyn^c_{\Pi,G}$.
Yet a problem arises with compositions of morphisms
\[
\eta_\beta : \check{\mA} \to \check{\mA}'
\ \ \ , \ \ \ 
\eta'_{\beta'} : \check{\mA}' \to \check{\mA}''
\, ,
\]
where $\beta'_b = \alpha''_\bo(g'_b)$ and $\beta_b = \alpha'_\bo(g_b)$.
To illustrate this point, we consider the composition $\{ \eta' \circ \eta \}_{\beta^2} :=$ $\eta'_{\beta'} \circ \eta_\beta$, 
so that, by the first of (\ref{eq.B2.1a}), the map $\beta^2$ is defined by
$\beta^2_b := \alpha''_\bo(g'_b g_b)$.
Then, with the obvious notation, iterating (\ref{eq.beta}) and (\ref{eq.beta'}) we find the analogue of (\ref{eq.beta'}) for
$\eta'_{*,\kappa'} \circ \eta_{*,\kappa}$ and $\{ \eta'_{\beta'} \circ \eta_\beta \}_*$
respectively:
\begin{equation}
\label{eq.beta1}
\jmath^{ \,''}_p \circ \eta'_a \circ \eta_a
\, = \, 
\beta^1_p \circ \eta'_a \circ \eta_a \circ \jmath_p \, ,
\ \ \ , \ \ \ 
\jmath^{ \,''}_p \circ \{ \eta' \circ \eta \}_a
\, = \, 
\beta^2_p \circ \{ \eta' \circ \eta \}_a \circ \jmath_p \, ,
\end{equation}
where
\begin{equation}
\label{eq.beta'''}
\beta^1_p \ = \ 
\alpha''_a( \wa{u''}_{ b_n * \ldots * b_2 }(g'_1) \,
          \ldots \,
          \wa{u''}_{b_n}(g'_{n-1}) \,
          g'_n \, \,
          \wa{u'}_{ b_n * \ldots * b_2 }(g_1) \,
          \ldots \,
          \wa{u'}_{b_n}(g_{n-1}) \,
          g_n           
          )
\end{equation}
and
\begin{equation}
\label{eq.beta''''}
\beta^2_p \ = \ 
\alpha''_a( \wa{u''}_{ b_n * \ldots * b_2 }(g'_1g_1) \,
          \ldots \,
          \wa{u''}_{b_n}(g'_{n-1} g_{n-1}) \,
          g'_n g_n \, 
          )
\, .
\end{equation}
Note that the two left hand sides of (\ref{eq.beta1}) agree;
composing to the right with $\jmath_p^{-1}$, we have
\begin{equation}
\label{eq.beta1a}
\beta^1_p \circ \eta'_a \circ \eta_a
\ = \ 
\beta^2_p \circ \{ \eta' \circ \eta \}_a
\ = \
\beta^2_p \circ \eta'_a \circ \eta_a\, .
\end{equation}
The relations (\ref{eq.beta'''}) and (\ref{eq.beta''''}) say that in general $\beta^1_p \neq \beta^2_p$.
On the other hand, by Remark \ref{rem.ek} we have functoriality if, and only if, $\beta^1_p = \beta^2_p$.
Thus in general we cannot expect that twisted holonomy preserves composition of morphisms of the type $\eta_\beta$.
Yet we have two favorable cases.
\begin{cor}
\label{cor.C4.2}
Let $G$ be abelian and $\Cgrb_{\Delta,G}^{ab}$ denote the full subcategory of $\Cgrb_{\Delta,G}$
with objects bundle $q$-$C^*$-gerbes where $q \in \check{Z}^2(\Delta,{}^2G)$ and 
${}^2G = (G \tto N)$ is such that $N$ acts trivially on $G$.
Then twisted holonomy defines a functor
\begin{equation}
\label{eq.cor.C4.2}
\Cgrb_{\Delta,G}^{ab} \, \to \, \tdyn^c_{\Pi,G} \ .
\end{equation}
\end{cor}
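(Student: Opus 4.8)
The plan is to extend the functor of Theorem~\ref{thm.C4.1} to the whole of $\Cgrb_{\Delta,G}^{ab}$, reusing the construction carried out in the paragraph \emph{Generic gerbe morphisms} and verifying that the obstruction to functoriality identified there vanishes under the present hypotheses. First I would set the functor (\ref{eq.cor.C4.2}) to coincide on objects with the one of Theorem~\ref{thm.C4.1}, that is $\check{\mA} \mapsto \mA_* = (A_*,\alpha_a,\bar{\jmath}^{\, a},\gamma_*)$, and on a generic gerbe morphism $\eta_\beta : \check{\mA} \to \check{\mA}'$ to give the arrow $\{\eta_\beta\}_* = \eta_{*,\kappa}$ of $\tdyn^c_{\Pi,G}$ already produced before the corollary, with $\kappa(l) := \varsigma_{\check{\mA}',a}(\beta_{s(l)})$ and $\beta_p$ as in (\ref{eq.defbeta}); by (\ref{eq.beta''}) the relations (\ref{eq.C3.mk}) are satisfied, so this is a legitimate morphism. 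Choosing the sections $\varsigma$ unital, the identity gerbe morphism ($\beta \equiv id$) gives $\beta_p = \alpha'_a(1) = id$ for every loop, hence $\kappa \equiv 1$ and $\eta_{*,\kappa} = id$.

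The substantive point is functoriality under composition, and here I would invoke Remark~\ref{rem.ek}: since $\eta_{*,\kappa}$ is completely determined by $\eta_a$ and by the mapping $p \mapsto \beta_p$, for composable $\eta_\beta : \check{\mA} \to \check{\mA}'$ and $\eta'_{\beta'} : \check{\mA}' \to \check{\mA}''$ one has $\{\eta'_{\beta'} \circ \eta_\beta\}_* = \eta'_{*,\kappa'} \circ \eta_{*,\kappa}$ if and only if $\beta^1_p = \beta^2_p$ for every loop $p : a \to a$, where $\beta^1_p$ and $\beta^2_p$ are given by (\ref{eq.beta'''}) and (\ref{eq.beta''''}). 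The key observation is that $\check{\mA}'$ and $\check{\mA}''$ belong to $\Cgrb_{\Delta,G}^{ab}$, so their twisting cocycles $q'=(u',g')$ and $q''=(u'',g'')$ take values in $2$--groups $(G \tto N')$ and $(G \tto N'')$ with $N'$, $N''$ acting trivially on $G$; consequently $\wa{u'}_b = \wa{u''}_b = id_G$ for all $b \in \Sigma_1(\Delta)$, and every composite automorphism $\wa{u'}_{b_n * \ldots * b_k}$, $\wa{u''}_{b_n * \ldots * b_k}$ occurring in (\ref{eq.beta'''})--(\ref{eq.beta''''}) is the identity of $G$.

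Writing $p = b_n * \ldots * b_1$ and $\beta_{b_k} = \alpha'_{\partial_0b_k}(g_k)$, $\beta'_{b_k} = \alpha''_{\partial_0b_k}(g'_k)$, the formulas (\ref{eq.beta'''}) and (\ref{eq.beta''''}) then collapse to $\beta^1_p = \alpha''_a(g'_1 \cdots g'_n \, g_1 \cdots g_n)$ and $\beta^2_p = \alpha''_a\big((g'_1 g_1) \cdots (g'_n g_n)\big)$. Since $G$ is abelian the two products inside $\alpha''_a$ coincide, whence $\beta^1_p = \beta^2_p$, and (\ref{eq.cor.C4.2}) is a functor. I do not expect a genuine obstacle: the whole force of the hypotheses is precisely to kill the propagation automorphisms $\wa{u'}$, $\wa{u''}$ and then to let commutativity of $G$ identify the two reorderings of the product, so once the bookkeeping of the preceding paragraph is in place the verification is immediate.
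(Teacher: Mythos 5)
Your proof is correct and follows the same route as the paper: the paper's own argument is just the one-line observation that comparing (\ref{eq.beta'''}) with (\ref{eq.beta''''}) gives $\beta^1_p = \beta^2_p$, which is exactly your computation once the trivial $N$-action kills the $\wa{u'}$, $\wa{u''}$ factors and commutativity of $G$ identifies the two orderings of the product. Your additional checks (identities, well-definedness of $\{\eta_\beta\}_*$ via Remark~\ref{rem.ek}) are sound housekeeping that the paper leaves implicit.
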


\begin{proof}
Comparing (\ref{eq.beta'''}) and (\ref{eq.beta''''}) we find $\beta^1_p = \beta^2_p$ for all $p$.
\end{proof}

\begin{cor}
\label{cor.C4.3}
Let $\Cgrb_{\Delta,G}^{su}$ denote the subcategory of $\Cgrb_{\Delta,G}$ having arrows morphisms 
of the type $\eta_\beta$ with $\eta$ surjective. Then twisted holonomy defines a functor
\begin{equation}
\label{eq.cor.C4.2}
\Cgrb_{\Delta,G}^{su} \, \to \, \tdyn^c_{\Pi,G} \ .
\end{equation}
\end{cor}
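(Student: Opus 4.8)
The plan is to build directly on the general discussion preceding Corollary~\ref{cor.C4.2}, where a generic gerbe morphism $\eta_\beta\colon\check{\mA}\to\check{\mA}'$ was already shown to induce a morphism $\{\eta_\beta\}_*=\eta_{*,\kappa}$ of the type (\ref{eq.C3.mk}), with $\kappa(l)=\varsigma_{\check{\mA}',a}(\beta_{s(l)})$ read off from (\ref{eq.defbeta}) and the relation (\ref{eq.beta''}) together with the $G$-equivariance of $\eta_*$ guaranteeing that $\eta_{*,\kappa}$ lies in $\tdyn^c_{\Pi,G}$. On objects the functor (\ref{eq.cor.C4.2}) is just the one of Theorem~\ref{thm.C4.1}, sending $\check{\mA}$ to $\mA_*=(A_*,\alpha_a,\bar\jmath^{\,a},\gamma_*)$, so nothing new is needed there; on arrows it sends $\eta_\beta$ to $\eta_{*,\kappa}$, and since every arrow of $\Cgrb_{\Delta,G}^{su}$ has this form the assignment is defined. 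So the whole content of the corollary is to verify that this assignment respects identities and composition.

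The heart of the matter is composition, and by Remark~\ref{rem.ek} it reduces to a single identity: for composable arrows $\eta_\beta\colon\check{\mA}\to\check{\mA}'$ and $\eta'_{\beta'}\colon\check{\mA}'\to\check{\mA}''$ and every loop $p\colon a\to a$, one needs $\beta^1_p=\beta^2_p$ in $\alpha''_a(G)$, where $\beta^1_p,\beta^2_p$ are the two expressions (\ref{eq.beta'''}) and (\ref{eq.beta''''}). The key observation — and this is exactly where the surjectivity hypothesis is used — is that (\ref{eq.beta1a}) already records
\[
\beta^1_p\circ(\eta'_a\circ\eta_a)\;=\;\beta^2_p\circ(\eta'_a\circ\eta_a)
\]
as maps $A_a\to A''_a$. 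Since $\eta$ and $\eta'$ are (fibrewise) surjective *-morphisms, $\eta'_a\circ\eta_a$ is surjective, so one may cancel it on the right and conclude $\beta^1_p=\beta^2_p$; hence $\{\eta'_{\beta'}\circ\eta_\beta\}_*=\{\eta'_{\beta'}\}_*\circ\{\eta_\beta\}_*$.

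What remains I would dispose of quickly as routine bookkeeping: that $\Cgrb_{\Delta,G}^{su}$ is genuinely a subcategory, i.e.\ the composite of two arrows with surjective underlying *-morphism again has surjective underlying *-morphism (clear fibrewise, and the explicit $\beta^2$ of (\ref{eq.beta})), and that identities are preserved, which holds once the sections $\varsigma_{\check{\mA},a}$ are chosen unital, since then the identity gerbe morphism gives $\kappa\equiv1$ and $\eta_{*,\kappa}=id_{\mA_*}$. The only genuine obstacle to functoriality is the general mismatch between (\ref{eq.beta'''}) and (\ref{eq.beta''''}); restricting to surjective arrows is precisely the device that neutralises it, in the same spirit as restricting to abelian coefficients does in Corollary~\ref{cor.C4.2}, so beyond invoking surjectivity in the cancellation step there is no real difficulty.
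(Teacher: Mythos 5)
Your proposal is correct and follows essentially the same route as the paper: the paper's proof likewise writes any $t \in A''_a$ as $\eta'_a(\eta_a(t_0))$ and uses (\ref{eq.beta1a}) to conclude $\beta^1_p = \beta^2_p$, whence functoriality by Remark \ref{rem.ek}. The additional bookkeeping you mention (closure under composition, preservation of identities) is left implicit in the paper but is indeed routine.
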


\begin{proof}
If $\eta'_{\beta'}$ and $\eta_\beta$ are composable arrows in $\Cgrb_{\Delta,G}^{su}$ then
any $t \in A''_a$ is of the type $t = \eta'_a(\eta_a(t_0))$ for some $t_0 \in A_a$,
and (\ref{eq.beta1a}) implies $\beta^1_p(t) =$ $\beta^1_p(\eta'_a(\eta_a(t_0))) =$ $\beta^2_p(t)$ 
for all loops $p$.
Thus $\beta^1_p = \beta^2_p$.
\end{proof}

\paragraph{Uniqueness of (\ref{eq.thm.C4.1}).}
In the proof of Theorem \ref{thm.C4.1} several choices are made to construct the desired functor (\ref{eq.thm.C4.1}).
We now investigate the dependence of the twisted holonomy on these choices.
In the sequel, we use the notation $\check{\mA} = (A,\jmath)_\Delta$, $q=(u,g)$, for a bundle $q$-$C^*$-gerbe
and, analogously, $\check{\mA}' = (A',\jmath^{ \,'})_\Delta$, $q'=(u',g')$, and so on.

\

A first choice is given by the sections $\varsigma_{ \check{\mA} , a }$. 
Picking different sections $\varsigma'_{ \check{\mA} , a }$ does not affect the automorphisms 
$\bar{\jmath}^{\, a}_l$, $l \in \Pi$,
but yields new maps $\gamma'_* := \varsigma'_{ \check{\mA} , a } \circ \gamma_a$.
This defines the new functor
$*' : \Cgrb^u_{\Delta,G} \, \to \, \tdyn_{\Pi,G}$, $\mA_{*'} :=$ $( A_* , \alpha_a , \bar{\jmath}^{\, a} , \gamma'_* )$,
$\eta_{*'} := \eta_*$.
Now, by definition we have $\alpha_{\gamma_*} = \alpha_{\gamma'_*}$, and
Remark \ref{rem.gamma} implies that the identity yield the isomorphism
$\iota_{\check{\mA}} : \mA_* \to \mA_{*'}$
such that
$\iota_{\check{\mA}'} \circ \eta_* = \eta_* \circ \iota_{\check{\mA}}$
for all morphisms $\eta : \check{\mA} \to \check{\mA}'$.
Thus $\iota$ is a natural transformation, making $*$ and $*'$ isomorphic functors.

\

A second choice is the section $s$.
As we saw in the proof of Prop.\ref{prop.B1.1}, picking a different section $r : \Pi \to \Pi_1^a(\Delta)$ we get a new twisted holonomy functor
\[
\mA_{*''} \, := \, ( A , \alpha_a , \bar{\jmath}^{\, a,r} , \gamma_* )
\ \ \ , \ \ \ 
\eta_{*''} = \eta_*
\ .
\]
Now,
\[
\kappa_*(l) \, := \, 
\bar{\jmath}^{\, a,r}_l \circ (\bar{\jmath}^{\, a}_l)^{-1}  \ = \
\jmath_{r(l)} \circ \jmath_{s(l)}^{-1} \,
\in \alpha_a(G) \, ,
\]
for all $l \in \Pi$ (in fact, $r(l)$ and $s(l)$ have the same homotopy class $l$),
and defining $\kappa_l := \varsigma_{ \check{\mA}' , a }(\kappa_*(l))$, $l \in \Pi$, we get a map $\kappa : \Pi \to G$ with 
$\alpha_{\kappa(l)} = \kappa_*(l)$.
By construction,
\[
\bar{\jmath}^{\, a,r}_l \, = \, \alpha_{\kappa(l)} \circ \bar{\jmath}^{\, a}_l \ \ \ , \ \ \  \forall l \in \Pi \ ,
\]
thus the identity $id \in \Aut A$ defines the isomorphism $\iota'_{\check{\mA}} : \mA_* \to \mA_{*''}$,
$\iota'_{\check{\mA}} := id_\kappa$, of the type (\ref{eq.C3.mk}).
For all morphisms $\eta : \check{\mA} \to \check{\mA}'$ we have
$\kappa'_*(l) := \bar{\jmath}^{\, ' \, a,r}_l \circ (\bar{\jmath}^{\, ' \, a}_l)^{-1}$
and
\[
\eta_* \circ \kappa_*(l) \ = \
\eta_a \circ \jmath_{r(l)} \circ \jmath_{s(l)}^{-1} \ = \
\jmath^{\, '}_{r(l)} \circ (\jmath^{\, '}_{s(l)})^{-1} \circ \eta_a \ = \
\kappa'_*(l) \circ \eta_* 
\, .
\]
Thus
$\iota'_{\check{\mA}'} \circ \eta_* = \eta_* \circ \iota'_{\check{\mA}}$,
and $\iota'$ is a natural transformation making $*$ and $*''$ isomorphic functors.

\

Finally, the last choice is the standard fibre. 
To correctly handle this case, it is convenient to regard (\ref{eq.thm.C4.1}) as a functor
$\Cgrb^u_{\Delta,G} \to \tdyn_\Pi$,
so that now the target category $\tdyn_\Pi$ has arrows more general morphisms of the type (\ref{eq.C3.1}).
Picking $a' \in \Delta$ we get the functor
\begin{equation}
\label{eq.thm.C4.1'}
* ''' : \Cgrb^u_{\Delta,G} \, \to \, \tdyn_\Pi 
\ \ , \ \
\mA_{*'''} = ( A'_* , \alpha_{a'} , \bar{\jmath}^{\, a',r} , \gamma_* )
\ \ , \ \ 
\eta_{*'''} := \eta_{a'}
\ ,
\end{equation}
where $A'_* := A_{a'}$ and $\bar{\jmath}^{\, a',r}_l := \jmath_{r(l)}$, $l \in \Pi$.
Here, for convenience we use the section $r(l) :=$ $p_{a'a} * s(l) * p_{aa'}$, $l \in \Pi$,
where $p_{aa'}$ is a path from $a'$ to $a$ and $p_{a'a} := \ovl{p_{aa'}}$.
In this way, by (\ref{eq.B2.1}) we get
\[
\bar{\jmath}^{\, a',r}_l \, = \, 
\jmath_{ p_{a'a} * s(l) * p_{aa'} } \, = \,
\iota''_{\check{\mA}} \circ \bar{\jmath}^{\, a}_l \circ (\iota''_{\check{\mA}})^{-1}
\ \ \ , \ \ \ 
\iota''_{\check{\mA}} \circ \alpha_{a,g} \, = \, \alpha_{a',\phi(g)} \circ \iota''_{\check{\mA}}
\ ,
\]
having defined $\iota''_{\check{\mA}} := \jmath_{p_{a'a}}$ and $\phi(g) := \wa{u}_{ p_{a'a} }(g)$, $g \in G$.
Thus a change of the fibre affects $\mA_*$ by the isomorphism $(\iota''_{\check{\mA}})^\phi$ of the type (\ref{eq.C3.1}).
Given the usual morphism $\eta : \check{\mA} \to \check{\mA}'$, the equalities
\[
\iota''_{\check{\mA}'} \circ \eta_* \ = \ 
\jmath^{\, '}_{p_{a'a}} \circ \eta_a \ = \
\eta_{a'} \circ \jmath_{p_{a'a}} \ = \
\eta_{*'''} \circ \iota''_{\check{\mA}} 
\]
say that $\iota''$ is a natural transformation making $*$ and $*'''$ isomorphic functors.

\paragraph{The range of (\ref{eq.thm.C4.1}).}
In contrast with the case of $C^*$-bundles, it is not evident whether or not (\ref{eq.thm.C4.1}) is an equivalence.
In particular, it remains an open question whether any twisted $C^*$-dynamical system is isomorphic to a twisted holonomy.
The main point is that a twisted holonomy $\bar{\jmath}^{\, a}$ preserves the inverse,
\[
(\bar{\jmath}^{\, a}_l)^{-1} \, = \, 
\jmath_{s(l)}^{-1} \, \stackrel{*}{=} \,
\jmath_{ \ovl{s(l)} } \, = \, 
\bar{\jmath}^{\, a}_{l^{-1}} 
\ \ \ , \ \ \ 
\forall l \in \Pi
\ ,
\]
where in the equality marked by * we used (\ref{eq.tc2}).
Instead, given a generic twisted $C^*$-dynamical system $\mA = (A,\alpha,\nu,\gamma)$, by (\ref{eq.tds}) we have
\[
\nu_{l^{-1}} \ = \ \nu_l^{-1} \circ \alpha_{ \gamma( l , l^{-1} ) } \ \ \ , \ \ \  l \in \Pi \ .
\]
We note that there may be the possibility that terms of the type $\alpha_{ \gamma( l , l^{-1} ) }$ are dropped by an isomorphism
of the type (\ref{eq.C3.mk}), anyway we leave this point as an open question.


\section{Duality}
\label{sec.D}

In the present section we prove our duality for group gerbes.
Given a presheaf of DR-categories (\emph{DR-presheaf}),
we show that its category of sections embeds, essentially in a unique way,
in the one of Hilbert gerbes associated to a non-abelian cocycle Ex.\ref{ex.Hilb}.

\subsection{Presheaves of categories}
\label{sec.D1}

Let $\Delta$ be a poset.
A \emph{presheaf of categories over $\Delta$} is given by a pair $\mS = (S,r)^\Delta$, where
$S = \{ S_o \}_{o \in \Delta}$
is a collection of categories and $r$ is a family of \emph{restriction functors}
\[
r_{o\omega } : S_\omega  \to S_o
\ \ \ : \ \ \ 
r_{o\omega } \circ r_{\omega \xi } = r_{o\xi }
\ \ , \ \
\forall o \leq \omega  \leq \xi  \ .
\]
Presheaf morphisms
$\eta : \mS \to \mS' =$ $(S',r')^\Delta$
are defined by families of functors preserving the presheaf structures,
$\eta_o \circ r_{o\omega } = r'_{o\omega } \circ \eta_\omega $, $o \leq \omega $.

We will focus on the case where each $S_o$ is a DR-category and each $r_{o\omega }$ is a tensor *-functor.
In this case we say that $\mS$ is a \emph{DR-presheaf}.

We say that $\mS$ is a \emph{presheaf bundle} whenever each $r_{o\omega }$ is a (strictly invertible) isomorphism,
and in this case we write $r_{\omega o} := r_{o\omega}^{-1}$;
we also use the notations 
$r_b : S_\bl \to S_\bo$, $b \in \Sigma_1(\Delta)$, and 
$r_p : S_a \to S_o$, $p : a \to o$, 
analogous to those relative to bundles.
Some remarks follow.
First, any presheaf bundle $\mS = (S,r)^\Delta$ is isomorphic to a presheaf bundle $\mS' = (S',r')^\Delta$
where any $S'_o$ equals a fixed category $F$ 
(\emph{i.e.}, $\mS'$ is a \emph{choice of the standard fibre} of $\mS$, analogously to Remark \ref{rem.standardfibre}).
Secondly, reasoning as in (\ref{eq.A8}) and subsequent remarks we find the following characterization of 
the set $\pbun(\Delta,F)$ of isomorphism classes of presheaf bundles with standard fibre $F$,
\begin{equation}
\label{eq.D1.1}
\pbun(\Delta,F) \, \simeq \, H^1( \pi_1(\Delta) , \Aut F ) \ ,
\end{equation}
\cite[Eq.3.21]{Vas12}.
When $F$ carries an additional structure, by $\Aut F$ we mean the group of automorphisms that preserves it:
for example, when $\mS$ is a DR-presheaf $\Aut F$ denotes the group of tensor *-automorphisms of $F$.
Given a reference $a \in \Delta$, we denote the corresponding holonomy by
\[
\Hol_a\mS  \, := \, \{ r_p \in \Aut S_a \, : \, p : a \to a  \} \ .
\]
\begin{ex}
\label{ex.D1.1}
Let $d \in \bN$ and $G \subseteq \bSU(d)$ a compact group. 
Then the inclusion map defines a unitary representation $\pi : G \to \bU(d)$,
and we denote the category of tensor powers of $\pi$ by $\wa{\pi}$.
Since $\wa{\pi}$ is a tensor subcategory of $\wa{G}$, it is a DR-category.
We have $\Aut \wa{\pi} \simeq N/G$, where $N$ is the normalizer of $G$ in $\bU(d)$.
Thus for any poset $\Delta$ we find $\pbun(\Delta,\wa{\pi})  \simeq  H^1( \pi_1(\Delta) , N/G )$,
see \cite[Theorem 4.5]{Vas12}.
\end{ex}

\paragraph{Sections.}
Let $\mS = (S,r)^\Delta$ be a presheaf of categories.
The \emph{category of sections} $\wt{\mS}$ of $\mS$ is defined by
\[
\left\{
\begin{array}{ll}
\obj \wt{\mS} \, := \, \{ \varrho = \{ \varrho_o \in \obj S_o \}_o \, : \, r_{o\omega }(\varrho_\omega ) = \varrho_o \, , \, \forall o \in \Delta  \} \ ,
\\ \\
(\varrho,\varsigma) \, := \, \{ t = \{ t_o \in (\varrho_o,\varsigma_o) \}_o \, : \, r_{o\omega }(t_\omega ) = t_o \, , \, \forall o \in \Delta \} 
\ \ \ \ , \ 
\varrho , \varsigma \in \obj \wt{\mS} \ .
\end{array}
\right.
\]
\begin{rem}
\label{rem.pb}
A useful property is that for any presheaf $\mS$ there is a presheaf bundle 
${}_\beta\mS = ( {}_\beta S , {}_\beta r )^\Delta$
with a canonical isomorphism ${}_\beta\wt{\mS} \simeq \wt{\mS}$ \cite[Prop.3.4]{Vas12},
thus we may reduce to study presheaf bundles for what concerns the categories of sections.
\end{rem}

We remark that $\wt{\mS}$ carries additional structure inherited by the fibres of the presheaf.
Thus in particular $\wt{\mS}$ is a DR-category when $\mS$ is a DR-presheaf 
($\Delta$ being assumed connected, the identity section is simple \cite[\S 3]{Vas12}).
In this case Doplicher-Roberts duality says that $\wt{\mS}$ is isomorphic to the dual of a compact group $\wt{G}$
{\footnote{Indeed, it is not difficult to prove that $\wt{G}$ is the dual of ${}_\beta \ovl{S}$,
           where ${}_\beta \ovl{S}$ is the fixed point category under the holonomy action $\pi_1^a(\Delta) \to \Aut {}_\beta S_a$
           defined by (\ref{eq.D1.1}).}},
but this does not give us detailed informations on the structure of $\wt{\mS}$.
Our aim is to prove, instead, a duality that takes account of the nature of fields over $\Delta$ of the objects and arrows of $\wt{\mS}$.
This shall be done in the following section.

\subsection{The embedding theorem}
\label{sec.D2}

We start with the following remarks, that are consequences of \cite[Theorems 6.1 and 6.9]{DR89}.
Let $T$ be a DR-category. Then there is an embedding functor $H : T \to \Hilb$ assigning, in particular, Hilbert spaces $H(\rho)$, $\rho \in \obj T$.
If $H' : T \to \Hilb$ is an other embedding functor, then there is a tensor unitary natural transformation $v$ from $H$ to $H'$,
and we write $v \in U_\otimes(H,H')$. This means that $v$ is a family of unitary operators 
$v^\rho : H(\rho) \to H'(\rho)$, $\rho \in \obj T$,
fulfilling certain compatibility properties (details are given in Eq.\ref{eq.D2.1} below).
In particular, $U_\otimes(H,H')$ is a compact group when $H=H'$, under the topology induced by the product of the unitary groups
$U(H(\rho))$, $\rho \in \obj T$.
We write $U_\otimes(H) := U_\otimes(H,H)$.
By \cite[Theorem 6.1]{DR89}, the dual of $U_\otimes(H)$ is isomorphic to $T$.

Let now $\mS = (S,r)^\Delta$ be a DR-presheaf bundle. 
By the above remarks, for any $o \in \Delta$ there is an embedding functor
$H_o : S_o \to \Hilb$,
and for any $\omega \geq o$ there is a tensor unitary natural transformation
\begin{equation}
\label{eq.D2.1a}
v_{\omega o} 
\, = \, 
\{ v_{\omega o}^\rho \}_{\rho \in \obj S_o} \in U_\otimes( H_o \, , \, H_\omega  \circ r_{\omega o} )
\end{equation}
(for $o=\omega$ we take $v_{oo} := 1_o$, where $1_o$ is the family of identities $1_o^\rho \in U(H_o(\rho))$).
Let $a \in \Delta$. We consider the compact groups
\[
G_\bullet \, := \, U_\otimes(H_a) \ \ , \ \ 
G_o \, := \, U_\otimes(H_o) \ \ , \ \ 
\forall o \in \Delta \ .
\]
By the above remarks, for any $o \in \Delta$ there is an isomorphism $S_o \simeq \wa{G}_o$.
To be concise we write 
$H^\rho_o := H_o(\rho)$.
Thus we can write the properties defining $v_{\omega o}$ and $G_o$ as follows:
\begin{equation}
\label{eq.D2.1}
\left\{
\begin{array}{lll}
v_{\omega o}^\rho \in U(H^\rho_o \, , \, H^{\rho' }_\omega  ) \ , &
v_{\omega o}^{\rho\sigma} = v_{\omega o}^\rho \otimes v_{\omega o}^\sigma \ , &
v_{\omega o}^{\sigma'} \circ H_o(t) \, = \, H_\omega (r_{\omega o}(t)) \circ v_{\omega o}^\rho \ ,
\\ \\
y^\rho \in U(H^\rho_o) \ , &
y^{\rho\tau} \, = \, y^\rho \otimes y^\tau \ , &
y^\sigma \circ H_o(t) \, = \, H_o(t) \circ y^\rho \ ,
\end{array}
\right.
\end{equation}
for all
$\rho,\sigma,\tau \in \obj S_o$,
$\rho'  := r_{\omega o}(\rho) , \sigma' := r_{\omega o}(\sigma)  \in \obj S_\omega $, 
$y \in G_o$,
$t \in (\rho,\sigma)$.

\begin{rem}
A family of embeddings $H = \{ H_o : S_o \to \Hilb \}$ is said to be \emph{coherent} whenever
$H_\omega = H_o \circ r_{o\omega}$ for all $o \leq \omega$.
If such a family exists, then $\wt{\mS}$ can be embedded in the category $\Hilb(\Delta)$ of Hilbert bundles over $\Delta$ \cite[Theorem 5.6]{Vas12}.
Examples of DR-presheaves that cannot be embedded in $\Hilb(\Delta)$ are given by presheaves with standard fibre $\wa{\pi}$, Example \ref{ex.D1.1},
such that the corresponding holonomy $r_* : \pi_1(\Delta) \to N/G$ does not admit a lift $\nu : \pi_1(\Delta) \to N$, ${\mathrm{mod}}G \circ \nu = r_*$,
\cite[remarks after Theorem 5.7]{Vas12}.
\end{rem}

\paragraph{The dual 2--group.}
The groups $G_o$ and the natural transformations $v_{\omega o}$
are key ingredients to construct the dual non-abelian cocycle of $\mS$,
as we explain in the following.

Let 
$\tbf : S_a \to S_o$, $\tbf' : S_o \to S_e$ 
be isomorphisms, and
$v \in$ $U_\otimes( H_a , H_o \circ \tbf )$, $v' \in$ $U_\otimes( H_o , H_e \circ \tbf' )$
natural transformations. Then we define natural transformations
\[
v' \cdot v \in U_\otimes( H_a , H_e \circ \tbf' \circ \tbf )
\ \ \ \ , \ \ \ \
v^{-1} \in U_\otimes( H_o , H_a \circ \tbf^{-1} ) \ ,
\]
by setting
\begin{equation}
\label{eq.N0}
\left\{
\begin{array}{ll}
v' \cdot v  \, := \, \{ \, v'^{\tbf(\rho)} \circ v^\rho : H^\rho_a \to H^{\tbf' \circ \tbf(\rho)}_e  \, \}_\rho \ ,
\\ \\
v^{-1} := \{ \, v^{\tbf^{-1}(\tau),*} : H^\tau_o \to H^{ \tbf^{-1}(\tau) }_a  \, \}_\tau \ .
\end{array}
\right.
\end{equation}
In particular, defining for any $o \in \Delta$
\[
\Nat_o  \, := \,  \{ v \in U_\otimes( H_o , H_o \circ \tbf ) \, : \, \tbf \in \Hol_o\mS  \} \ ,
\]
we obtain a group with identity $1_o$.
The interesting point is that any
$G_o$, $o \in \Delta$,
is a normal subgroup of $\Nat_o$ generated by those $y \in \Nat_o$ with $\tbf = id$.
Thus we have the 2--groups ${}^2G_o := \{ G_o \tto \Nat_o \}$.
We keep in evidence a reference $a \in \Delta$ and write
\begin{equation}
\label{eq.dual2group}
G_\bullet := G_a \ \ \ , \ \ \
\Nat_\bullet := \Nat_a \ \ \ , \ \ \
{}^2G_\bullet := (G_\bullet \tto \Nat_\bullet) \, .
\end{equation}
Let $v \in U_\otimes( H_o , H_o \circ \tbf )$ and $v' \in U_\otimes( H_o , H_o \circ \tbf' )$.
By (\ref{eq.N0}) we have $v^{-1} \cdot v' \in G_o$ if, and only if, $\tbf = \tbf'$.
Thus we make the identification 
\begin{equation}
\label{eq.dual2group.a}
\Nat_o / G_o \ \equiv \ \Hol_o\mS \ .
\end{equation}
%
%

\paragraph{The embedding.}
By \cite[Theorem 6.1]{DR89}, representations of $G_o$ are in one-to-one correspondence with objects $\rho \in \obj S_o$,
by writing 
\begin{equation}
\label{eq.D2.3aa}
\pi^\rho : G_\bullet \to U(H^\rho_o) \ \ , \ \  \pi^\rho(g) \, = \, g^\rho \ .
\end{equation}
%
%
%
%
The following theorem is expressed under our usual assumption that $\Delta$ is connected, 
anyway it holds in the general case with obvious modifications.

\begin{thm}[Embedding of DR-presheaves]
\label{thm.D2.1}
Let $\mS = (S,r)^\Delta$ be a DR-presheaf.
Then there is a unique (up to isomorphism) 2--group ${}^2G_\bullet$ and a unique (up to equivalence)
$q = (u,g) \in \check{Z}^2(\Delta,{}^2G_\bullet)$
such that there is an embedding
$\wt{\mS} \to \Hilb_q(\Delta)$.
\end{thm}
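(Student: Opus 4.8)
The plan is to reduce everything to a presheaf bundle and then construct $q$ and the embedding concretely from the data $(G_o, \Nat_o, v_{\omega o})$ already assembled. First I would invoke Remark \ref{rem.pb} to replace $\mS$ by a presheaf bundle ${}_\beta\mS$ with $\widetilde{{}_\beta\mS}\simeq\widetilde{\mS}$, so without loss of generality $\mS=(S,r)^\Delta$ is a presheaf bundle; fix a reference $a\in\Delta$. Then I would build the cocycle: for $b\in\Sigma_1(\Delta)$ set $u_b := v_{|b|\,\partial_0b}^{-1}\cdot v_{|b|\,\partial_1b}\circ(\text{natural transf.\ of } r_b)\in\Nat_{\partial_0b}$ (more precisely, transporting through the chosen standard fibre $S_o\equiv F$ so that all $\Nat_o$ become $\Nat_\bullet$), i.e. $u_b$ is the element of $\Nat_\bullet$ whose induced holonomy is $r_b$ under the identification (\ref{eq.dual2group.a}), and whose "natural transformation part" records the failure of $v$ to be coherent. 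Over a $2$-simplex $c$, the composite $u_{\partial_0c}\,u_{\partial_2c}\,u_{\partial_1c}^{-1}$ induces the holonomy $r_{\partial_0c}r_{\partial_2c}r_{\partial_1c}^{-1}=id$, hence by (\ref{eq.dual2group.a}) it lies in $G_\bullet$; call it $g_c$. The first equation of (\ref{eq.B1.01}) holds by construction, and since $i:G_\bullet\to\Nat_\bullet$ is injective (it is an inclusion of subgroups of the unitary-natural-transformation group), the second equation of (\ref{eq.B1.01}) is automatic by the first remark after the definition of $\check Z^2$. Thus $q=(u,g)\in\check Z^2(\Delta,{}^2G_\bullet)$.

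Next I would construct the embedding $\widetilde{\mS}\to\Hilb_q(\Delta)$. Given a section $\varrho=\{\varrho_o\}$, each $\varrho_o$ corresponds via \cite[Theorem 6.1]{DR89} to a representation $\pi_o^{\varrho_o}:G_o\to U(H_o^{\varrho_o})$, i.e.\ a $G_\bullet$-Hilbert space $H_o:=H_o^{\varrho_o}$ after the standard-fibre identifications; transport the $v_{\omega o}$'s to unitaries $U_b:H_{\partial_1b}\to H_{\partial_0b}$ for $b\in\Sigma_1(\Delta)$. The compatibility $r_{o\omega}(\varrho_\omega)=\varrho_o$ together with the naturality square in (\ref{eq.D2.1}) gives exactly the intertwining relation, the second line of (\ref{eq.B2.2}), with the twist $\widehat u_b$; and the defining failure of $v$ to be coherent over a $2$-simplex yields the first line of (\ref{eq.B2.2}) with the very same $g_c$. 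So $\check{\mH}(\varrho):=(H,U)_\Delta$ is a $q$-Hilbert bundle gerbe. On arrows $t\in(\varrho,\varsigma)$, the component $t_a\in(\varrho_a,\varsigma_a)$ transported through $H_a$ gives an intertwiner of the gerbe structures — compatibility of $t$ with the $r_{o\omega}$ plus naturality of $v$ forces the intertwining with every $U_b$. One checks this is a tensor $*$-functor (tensor/conjugate/identity of $\Hilb_q(\Delta)$ are induced from $\widehat{G_\bullet}$, matching those of $\widetilde{\mS}$ inherited from the fibres) and that it is faithful because $H_a$ is faithful on $S_a$ and a natural transformation vanishing in the $a$-component vanishes everywhere by the cocycle/connectedness argument. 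Finally one arranges, using the argument around Lemma \ref{lem.B1.1} (the map $i$ being injective, every cocycle is equivalent to a twisted connection) together with the "Associated gerbes" discussion, that after replacing $q$ by an equivalent twisted connection the image lies in $\Hilb_q(\Delta)$ as defined in Ex.\ref{ex.Hilb} (gerbes \emph{associated} to $q$); the equivalence of cocycles induces an isomorphism of the two Hilbert-gerbe categories, so nothing is lost.

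For uniqueness I would argue as follows. The holonomy datum $\Hol_a\mS\simeq\Nat_\bullet/G_\bullet$ is intrinsic to $\widetilde{\mS}$ (it is read off from (\ref{eq.D1.1}) applied to the standard fibre, which is determined by $\widetilde{\mS}$ up to the DR-duality $S_a\simeq\widehat{G_\bullet}$), so $G_\bullet$ is determined up to isomorphism and $\Nat_\bullet$ is the normalizer $N_{\pi_a}$-type $2$-group it generates, hence ${}^2G_\bullet$ is unique up to $2$-group isomorphism. Any other choice of the embeddings $H_o$ and natural transformations $v_{\omega o}$ differs by tensor unitary natural transformations $w_o\in U_\otimes(H_o)=G_o$ and $w_{\omega o}$, and a direct computation (the same shape as (\ref{eq.B1.01a})) shows the resulting cocycle $q'$ satisfies $(v,h)\in(q,q')$ with $v_o:=w_o$ transported to $\Nat_\bullet$ and $h$ built from the $w_{\omega o}$; via the injectivity of $i$ the $h$-part is forced by the rest, so $q$ is well-defined up to cohomology. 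I expect the main obstacle to be purely bookkeeping: carefully tracking the standard-fibre identifications $S_o\equiv F$, $H_o\equiv H_\bullet$ so that the $\Nat_o$ genuinely coincide and the $2$-cocycle identities land in the right groups, and verifying that the tensor/conjugate structure really is preserved on the nose rather than up to coherent isomorphism — i.e.\ checking that $\Hilb_q(\Delta)$ as a DR-category receives $\widetilde{\mS}$ as a sub-DR-category and not merely as an abstract category. Everything else is a faithful transcription of the Doplicher--Roberts reconstruction (Theorems 6.1 and 6.9 of \cite{DR89}) through the poset cohomology machinery of \S\ref{sec.B}.
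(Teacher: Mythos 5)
Your proposal is correct and follows essentially the same route as the paper's proof: reduce to a presheaf bundle, define $u_b$ from the failure of coherence of the $v_{\omega o}$'s transported to a base point, observe that $g_c$ lands in $G_\bullet$ because the induced holonomy over a $2$-simplex is trivial and that injectivity of $i$ makes the second cocycle identity automatic, build $\check{\mH}^\varrho=(H^\varrho,U^\varrho)_\Delta$ from sections and $\check t_o=H_o(t_o)$ from arrows, and obtain uniqueness by conjugating through the unitary natural transformations relating different choices of embeddings. The only differences are presentational (the paper implements your "standard-fibre transport" concretely via a path frame, and its construction yields a twisted connection on the nose, so the adjustment you anticipate via Lemma~\ref{lem.B1.1} is not needed).
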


\begin{proof}
Without loss of generality we assume that $\mS$ is a presheaf bundle, Remark \ref{rem.pb}.

As a \textbf{first step} we construct the 2-cocycle $q = (u,g)$.
To this end we consider a path frame $p_a = \{ p_{ao} : o \to a \}$, the unitary tranformations (\ref{eq.D2.1a}),
and for any $o \in \Delta$ write
\begin{equation}
\label{eq.barv}
\left\{
\begin{array}{ll}
\bar{r}_{ao} \, := \, r_{p_{ao}} \, = \, r_{a |b_n|} \circ r_{ |b_n| \partial_0 b_{n-1} } \circ \ldots \circ r_{ |b_1| o  } \ , \\ \\
\bar{v}_{ao} \, := \, v_{ |b_n| a }^{-1} \cdot v_{ |b_n| \partial_0 b_{n-1}  }  \cdot \ldots \cdot  v_{ |b_1| o } \ .
\end{array}
\right.
\end{equation}
Note that $\bar{r}_{ao} :  S_o \to S_a$ is an isomorphism, whilst $\bar{v}_{ao} \in U_\otimes( H_o , H_a \circ \bar{r}_{ao} )$
and in particular $\bar{v}_{aa} = 1_a$.
For convenience we write
$\bar{v}_{oa} := \bar{v}_{ao}^{-1}$ and $\bar{r}_{oa} := \bar{r}_{ao}^{-1}$.
Now, for any $b \in \Sigma_1(\Delta)$ we set
\begin{equation}
\label{def.fb}
\tbfu_b \, := \, \bar{r}_{a \bo} \circ r_b \circ \bar{r}_{\bl a} \ \stackrel{ (\ref{eq.ellb}) }{ = } r_{\ell_b} \, \in \Hol_a\mS \equiv N_\bullet / G_\bullet \ ,
\end{equation}
and
\begin{equation}
\label{def.ub}
u_b \, := \,  \bar{v}_{a\bo} \cdot v_{|b|\bo}^{-1} \cdot v_{|b|\bl} \cdot \bar{v}_{\bl a} \, \in 
U_\otimes ( H_a , H_a \circ \tbfu_b )  \subseteq  N_\bullet
\ .
\end{equation}
To verify the cocycle relations (\ref{eq.B1.01}), we note that
\[
u_\co \cdot u_\cz \cdot u_\cl^{-1} \, \in 
U_\otimes( \, H_a \, , \, H_a \circ \bar{r}_{a \partial_{00}c} \circ r_\co \circ r_\cz \circ r_\cl^{-1} \circ \bar{r}_{\partial_{01}c a} \, )
\ \ \ \ , \
\forall c \in \Sigma_2(\Delta)
\, .
\]
Now, by (\ref{eq.A3}) we have $\partial_{01}c = \partial_{00}c$; moreover, since $r$ fulfils the presheaf relations, we have
$r_\co \circ r_\cz \circ r_\cl^{-1} = id$.
This implies
$\bar{r}_{a \partial_{00}c} \circ r_\co \circ r_\cz \circ r_\cl^{-1} \circ \bar{r}_{\partial_{01}c a}  =  id$,
that is equivalent to saying 
\begin{equation}
\label{eq.N0b}
g_c \, := \, u_\co \cdot u_\cz \cdot u_\cl^{-1} \, \in U_\otimes(H_a) \equiv G_\bullet
\ \ \ \ , \
\forall c \in N_2(\Delta) \ .
\end{equation}
In this way the first of (\ref{eq.B1.01}) is fulfilled.
The second of (\ref{eq.B1.01}) follows by injectivity of the inclusion $G_\bullet \subseteq \Nat_\bullet$ (see remarks after (\ref{eq.B1.01})).
Thus
$q := (u,g) \in \check{Z}^2(\Delta,{}^2G_\bullet)$
as desired.

As a \textbf{second step} we assign an associated $q$-Hilbert gerbe Ex.\ref{ex.Hilb} to any section $\varrho \in \obj \wt{\mS}$.
Given $o \in \Delta$, we write $H^\varrho_o := H_o(\varrho_o)$.
Any $H^\varrho_o$ is endowed with the unitary representation
\begin{equation}
\label{eq.N0c}
\pi^\varrho_o : G_\bullet \to U(H^\varrho_o)
\ \ \ , \ \ \ 
\pi^\varrho_o(g)  :=   \{ \bar{v}_{oa} \cdot g \cdot \bar{v}_{ao} \}^{\varrho_o} \ .
\end{equation}
which extends to the morphism
\begin{equation}
\label{eq.N0d}
\pi^\varrho_o : {}^2G_\bullet \to {}^1U(H^\varrho_o)
\ \ \ , \ \ \ 
\pi^\varrho_o(v) := \{ \bar{v}_{oa} \cdot v \cdot \bar{v}_{ao} \}^{\varrho_o} \ ,
\end{equation}
where ${}^1U(H_\pi)$ is the 2--group defined by the identity of $U(H_\pi)$ as in (\ref{eq.1G}).
In fact, if $v \in U_\otimes( H_a , H_a \circ \tbf )$, $\tbf \in \Hol_a\mS$, 
then $\pi^\varrho_o(v)$ is a unitary operator from $H^\varrho_o$ onto $H_o( \bar{r}_{oa} \circ \tbf \circ \bar{r}_{ao}(\varrho_o) )$;
since $\varrho$ is a section, we have $\bar{r}_{oa} \circ \tbf \circ \bar{r}_{ao}(\varrho_o) = \varrho_o$ and $\pi^\varrho_o(v) \in U(H^\varrho_o)$.
Since $G_\bullet$ is normal in $N_\bullet$, this suffices to prove that (\ref{eq.N0d}) is a 2--group morphism.
Note that in particular $\pi^\varrho_a(v) = v^{\varrho_a}$.

\noindent 
%
%
Now, we define the unitary operators
\[
U^\varrho_b : H^\varrho_\bl \to H^\varrho_\bo
\ \ \ \ , \ \ \ \
U^\varrho_b \ := \  \{ v_{ |b| \bo }^{-1}  \cdot  v_{ |b| \bl } \}^\veps
\ \ \ , \
\forall b \in \Sigma_1(\Delta) \, , \, \veps := \varrho_\bl
\ .
\]
We note that (\ref{def.ub}) implies 
\begin{equation}
\label{eq.N0e}
U^\varrho_b  \ = \ 
\{ \bar{v}_{\bo a} \cdot u_b \cdot \bar{v}_{a \bl} \}^\veps 
\ ,
\end{equation}
This relation can be expressed in an equivalent way by defining the unitary operators
$U^\varrho_{ao} : H^\varrho_o \to H^\varrho_a$, $U^\varrho_{ao} := (\bar{v}_{ao})^{\varrho_o}$, $o = \bl , \bo$:
\begin{equation}
\label{eq.N0e'}
\pi^\varrho_a(u_b) \ = \ (u_b)^{\varrho_a} \ = \ U^\varrho_{a\bo} \circ U^\varrho_b \circ U^\varrho_{\bl a} \ .
\end{equation}
Thus, given $c \in \Sigma_2(\Delta)$ and $\veps := \varrho_{ \partial_{01}c }$ we have
\[
\begin{array}{lcl}
U^\varrho_\co \circ U^\varrho_\cz \circ U^{\varrho , *}_\cl & = & 
\{ \bar{v}_{\partial_{01}c a} \cdot u_\co \cdot u_\cz \cdot u_\cl^{-1} \cdot \bar{v}_{a \partial_{01}c } \}^\veps \ = \\ & = &
\{ \bar{v}_{\partial_{01}c a} \cdot g_c \cdot \bar{v}_{a \partial_{01}c } \}^\veps \ = \\ & = &
\pi^\varrho_{ \partial_{01}c }(g_c)
\ .
\end{array}
\]
This proves that the first of (\ref{eq.B2.2}) is verified for the pair $\check{\mH}^\varrho := (H^\varrho,U^\varrho)_\Delta$.
Let now $g \in G_\bullet$ and $b \in \Sigma_1(\Delta)$. We set $\veps := \varrho_\bl$ and compute
\[
\begin{array}{lcl}
U^\varrho_b \circ \pi^\varrho_\bl(g) & = & 
\{ \bar{v}_{\bo a} \cdot u_b \cdot \bar{v}_{a \bl} \cdot \bar{v}_{\bl a} \cdot g \cdot \bar{v}_{a \bl} \}^\veps \ = \\ & = &
\{ \bar{v}_{\bo a} \cdot \wa u_b(g) \cdot u_b \cdot \bar{v}_{a \bl} \}^\veps \ = \\ & = &
\{ \bar{v}_{\bo a} \cdot \wa u_b(g) \cdot \bar{v}_{a \bo} \cdot \bar{v}_{\bo a} \cdot u_b \cdot \bar{v}_{a \bl} \}^\veps \ = \\ & = &
\pi^\varrho_\bo(\wa u_b(g)) \circ U^\varrho_b
\ .
\end{array}
\]
This proves that the second of (\ref{eq.B2.2}) is verified, and $\check{\mH}^\varrho$ is a $q$-Hilbert gerbe.
It remains to prove that $\check{\mH}^\varrho$ is associated to $q$. 
To this end we note that we already have the required 2-group morphism (\ref{eq.N0d}), 
and we just have to prove that the cocycle $\tau^a = ( U^{\varrho,a} , \delta^a )$ defined by $\check{\mH}^\varrho$ in the sense of Prop.\ref{prop.B3.1} is equivalent to 
$\pi^\varrho_{a,*}q := ( \pi^\varrho_a \circ u , \pi^\varrho_a \circ g )$.
Now, (\ref{eq.N0e'}) and the definition of $U^{\varrho,a}_b :=$ $U^\varrho_{a\bo} \circ U^\varrho_b \circ U^\varrho_{\bl a}$
imply $U^{\varrho,a}_b = \pi^\varrho_a(u_b)$; this also implies 
$\delta^a_c :=$ 
$U^{\varrho,a}_\co \circ U^{\varrho,a}_\cz \circ (U^{\varrho,a}_\cl)^{-1} =$ 
$\pi^\varrho_a( u_\co \cdot u_\cz \cdot u_\cl^{-1} ) =$
$\pi^\varrho_a(g_c)$,
$c \in \Sigma_2(\Delta)$.
That is, $\tau^a = \pi^\varrho_{a,*}q$ and $\check{\mH}^\varrho$ is associated to $q$ by means of (\ref{eq.N0d}).

The \textbf{third step} is to construct the embedding $\wt{\mS} \to \Hilb_q(\Delta)$.
To this end, given $t \in (\varrho,\varsigma)$ we assign a family $\check{t}$ of linear operators by defining
$\check{t}_o :=$ $H_o(t_o) : H^\varrho_o \to H^\varsigma_o$, $o \in \Delta$. 
By definition we have
$\check{t}_o \circ \pi^\varrho_o(g) =$ $\pi^\varsigma_o(g) \circ \check{t}_o$ for all $g \in G_\bullet$. 
Moreover, defining $\tau_0 := \varrho_\bo$, $\tau_1 := \varsigma_\bl$, $p := p_{\bo a} * p_{a \bl}$,
and applying (\ref{eq.D2.1}), we get 
\[
\begin{array}{lcl}
U^\varsigma_b \circ \check{t}_\bl & = &
\{ \bar{v}_{\bo a} \cdot u_b \cdot \bar{v}_{a \bl} \}^{\tau_1} \circ H_\bl(t_\bl) \ = \\ & = &
H_\bo(r_p(t_\bl)) \circ \{ \bar{v}_{\bo a} \cdot u_b \cdot \bar{v}_{a \bl} \}^{\tau_0} \ = \\ & = &
H_\bo(t_\bo) \circ U^\varrho_\bl \ = \\ & = &
\check{t}_\bo \circ U^\varrho_\bl
\ .
\end{array}
\]
This proves that $\check{t}$ is a gerbe morphism, and allows us to define the functor
\begin{equation}
\label{eq.emb}
\varrho \mapsto \check{\mH}^\varrho \ \ \ \ , \ \ \ \ t \mapsto \check{t} \ .
\end{equation}
Since each $H_o$, $o \in \Delta$, is a *-functor, we have that (\ref{eq.emb}) is actually a *-functor.
Moreover, (\ref{eq.D2.1}) and the fact that any $H_o$ is a tensor functor imply that (\ref{eq.emb}) preserves the tensor product.
Since each $H_o$ is an embedding
{\footnote{An embedding is a functor injective on the objects and defining isomorphisms on the spaces of arrows.}},
we have that (\ref{eq.emb}) is an embedding.

As a \textbf{final step} we discuss uniqueness of ${}^2G_\bullet$ and $q$.
To this end, we remark that our construction makes use of three choices:
(1) the family of embeddings $H = \{ H_o \}$;
(2) the natural transformations $v_{\omega o} \in U_\otimes( H_o , H_\omega \circ r_{\omega o} )$, $o \leq \omega$;
(3) the path frame $p_a = \{ p_{ao} : o \to a \}$.
Chosing a different family of embeddings $H' = \{ H'_o \}$ we get the 2--groups
${}^2G'_o = ( G'_o \tto N'_o )$, $o \in \Delta$,
where 
$N'_o :=$ $\{ u \in U_\otimes( H'_o , H'_o \circ \tbf ) , \tbf \in \Hol_o\mS \}$
and
$G'_o :=$ $U_\otimes(H'_o)$.
Now, applying as usual \cite[Theorem 6.9]{DR89} we have that for any $o \in \Delta$ there is 
$w_o \in U_\otimes( H_o , H'_o )$.
Thus we define the map
$\beta_o : N_o \to N'_o$, $\beta_o(u) := w_o \cdot u \cdot w_o^{-1}$.
It is easily seen that $\beta_o$ is an isomorphism and that $\beta_o(G_o) = G'_o$, thus we get 2--group isomorphisms
$\beta_o : {}^2G_o \to {}^2G'_o$.
Let now $v'_{\omega o} \in U_\otimes(H'_o,H'_\omega \circ r_{\omega o})$, $o \leq \omega$, be natural transformations as in (\ref{eq.D2.1a})
and $q' = (u',g')$ denote the ${}^2G'_\bullet$-cocycle defined by $\{ v'_{\omega o} \}$ as in the first step of the present proof.
Then by construction we have 
$w_a \cdot u_b \cdot w_a^{-1} \in$ $U_\otimes( H'_a , H'_a \circ \tbfu_b )$, $b \in \Sigma_1(\Delta)$,
and since ${u'_b}^{-1} \in$ $U_\otimes( H'_a , H'_a \circ \tbfu_b^{-1} )$, we find
\[
w_a \cdot u_b \cdot w_a^{-1} \cdot {u'_b}^{-1} \, =: \, h_b^{-1} \in U_\otimes(H'_a) = G'_a \ .
\]
From the above identity we obtain
$\beta_a(u_b) \ = \ h_b^{-1} \cdot u'_b$.
Keeping in mind (\ref{eq.B1.01a}), we conclude that the pair $(1,h)$ makes $\beta_{a,*}q$ cohomologous to $q'$
{\footnote{
Note that since the maps $G_a \to N_a$ and $G'_a \to N'_a$ are inclusions, the second of (\ref{eq.B1.01a}) is redundant and there is no need to verify it.
}}.
Finally, let us consider $e \in \Delta$ and a path frame $p'_e = \{ p'_{oe} : e \to o \}$.
Given $o \in \Delta$ we consider 
$\bar{r}'_{eo} := r_{p'_{oe}} : S_e \to S_o$
and
$\bar{v}'_{oe} := v_{p'_{oe}} \in$ $U_\otimes(H_o , H_e \circ \bar{r}'_{eo})$,
defined as in (\ref{eq.barv}) with the paths of $p'_e$ in place of those of $p_a$.
The corresponding cocycle is given by 
$u'_b :=$ $\bar{v}'_{e\bo} \cdot v_{|b|\bo}^{-1} \cdot v_{|b|\bl} \cdot \bar{v}'_{\bl e}$,
with $g'_c \in G_e$ defined analogously to $g_c$. 
We set $q' := (u',g') \in$ $\check{Z}^2(\Delta,{}^2G_e)$.
Then we define the isomorphism
$\beta : {}^2G_2 \to {}^2G_\bullet$, $\beta(u') := \bar{v}'_{ae} \cdot u' \cdot \bar{v}'_{ea}$,
and
$v_o :=$ $\bar{v}_{ao} \cdot \bar{v}'_{oe} \cdot \bar{v}'_{ea} \in$ $N_\bullet$,
for all $o \in \Delta$, so that we find
$v_\bo \cdot \beta(u'_b) \cdot v_\bl^{-1} =$ $u_b$,
implying that $\beta_*q'$ is cohomologous to $q$.
Combining the above constructions of isomorphisms and equivalences,
we get uniqueness of ${}^2G_\bullet$ up to isomorphism and uniqueness of $q$ up to equivalence.
\end{proof}

We call ${}^2G_\bullet$ the \emph{2--group defined by $\mS$} and $q$ the \emph{cocycle defined by $\mS$}.

\begin{thm}
\label{cor.ThmEmb.1}
Given DR-presheaf bundles $\mS = (S,r)^\Delta$ and $\mS' = (S',r')^\Delta$, the following are equivalent:
$(i)$  There is an isomorphism $\eta : \mS \to \mS'$.
$(ii)$ $\mS$ and $\mS'$ define the same 2--group ${}^2G_\bullet$ (up to isomorphism)
       and the same cocycle $q \in \check{Z}^2(\Delta,{}^2G_\bullet)$ (up to equivalence).
\end{thm}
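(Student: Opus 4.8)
The plan is to prove the two implications separately, using the uniqueness clause of Theorem~\ref{thm.D2.1}, the classification of presheaf bundles (\ref{eq.D1.1}), and the fact that reducing the dual cocycle $q$ modulo $G_\bullet$ recovers the holonomy of the underlying presheaf bundle.

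For $(i)\Rightarrow(ii)$ I would argue by transport of structure. Let $\eta : \mS \to \mS'$ be an isomorphism. Starting from a family of embeddings $H'_o : S'_o \to \Hilb$, natural transformations $v'_{\omega o} \in U_\otimes(H'_o, H'_\omega \circ r'_{\omega o})$ and a path frame used to build ${}^2G_\bullet$ and $q'$ for $\mS'$, set $H_o := H'_o \circ \eta_o$, push the $v'_{\omega o}$ through the relations $\eta_o \circ r_{o\omega} = r'_{o\omega} \circ \eta_\omega$ to obtain natural transformations $v_{\omega o}$ for $\mS$, and keep the same path frame. Conjugation by the $\eta_o$ identifies $\Hol_o\mS$ with $\Hol_o\mS'$, hence $\Nat_o$ with $\Nat'_o$ and $G_o$ with $G'_o$, and a direct inspection of (\ref{def.ub}) shows that the cocycle obtained for $\mS$ equals that for $\mS'$ under these identifications. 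By the uniqueness clause of Theorem~\ref{thm.D2.1}, ${}^2G_\bullet$ and $[q]$ are independent of all such choices, so $\mS$ and $\mS'$ define the same 2--group and the same cocycle.

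For $(ii)\Rightarrow(i)$ the goal is to show that $\mS$ is determined up to isomorphism by ${}^2G_\bullet$ and $[q]$. Write ${}^2G_\bullet = (G_\bullet \tto N_\bullet)$ and let $q=(u,g)$, $q'=(u',g') \in \check{Z}^2(\Delta,{}^2G_\bullet)$ be the cocycles defined by $\mS$ and $\mS'$ (using the given isomorphism of 2--groups to place both over ${}^2G_\bullet$); pick a cochain $(\zeta,h)$ realizing the equivalence $q \sim q'$ as in (\ref{eq.B1.01a}). By Remark~\ref{rem.pb} we may assume $\mS,\mS'$ are presheaf bundles. Since $S_a \simeq \wa{G_\bullet} \simeq S'_a$ by \cite[Theorem 6.1]{DR89}, after fixing such isomorphisms we present both as standard-fibre presheaf bundles over the common fibre $F := \wa{G_\bullet}$; retracing the uniqueness argument of Theorem~\ref{thm.D2.1}, these identifications can be chosen compatibly with the inclusions $N_\bullet/G_\bullet \equiv \Hol_a\mS \subseteq \Aut F$ of (\ref{eq.dual2group.a}) and its analogue for $\mS'$. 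The crucial observation is then that $u_G := \mu_2(q) \in Z^1(\Delta, N_\bullet/G_\bullet)$ (Lemma~\ref{lem.B1.1}), read through $N_\bullet/G_\bullet \equiv \Hol_a\mS$, is exactly the family $b \mapsto \tbfu_b = r_{\ell_b}$ occurring in the proof of Theorem~\ref{thm.D2.1}; iterating over a loop $p : a \to a$ and using the path-frame telescoping (\ref{eq.loops}) together with homotopy invariance of $r_p$ for bundles, the class of $u_G$ transported to $H^1(\pi_1(\Delta),\Aut F)$ via (\ref{eq.A6}) is precisely the holonomy class classifying $\mS$ through (\ref{eq.D1.1}); likewise for $u'_G$ and $\mS'$.

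To finish, I would compare the two holonomy classes. Reducing the first identity of (\ref{eq.B1.01a}) modulo $G_\bullet$ (so that the factor $\unl{h_b}\in i(G_\bullet)$ disappears) gives $u'_{b,G} = \zeta_{\bo,G}\,u_{b,G}\,\zeta_{\bl,G}^{-1}$, i.e. $\zeta_G \in (u_G,u'_G)$; hence $u_G$ and $u'_G$ are cohomologous in $H^1(\Delta, N_\bullet/G_\bullet)$, and so also in $H^1(\pi_1(\Delta),\Aut F)$. By (\ref{eq.D1.1}), presheaf bundles over $F$ are classified by their holonomy class, so $\mS \simeq \mS'$. The hard part will be the bookkeeping in the third paragraph: checking that $u_G$ genuinely recovers the holonomy of the presheaf bundle, and --- more delicately --- that the isomorphism of 2--groups together with the identifications $S_a \simeq F \simeq S'_a$ can be arranged so that the two resulting inclusions $N_\bullet/G_\bullet \hookrightarrow \Aut F$ agree (up to an inner automorphism of $\Aut F$, which does not affect $H^1$), which forces one to re-examine the uniqueness part of Theorem~\ref{thm.D2.1} with this compatibility in mind.
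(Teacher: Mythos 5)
Your proposal is correct, and the direction $(i)\Rightarrow(ii)$ is exactly the paper's argument: take embeddings $H'_o$ for $\mS'$, set $H_o := H'_o \circ \eta_o$, and observe that the same natural transformations $v'_{\omega o}$ (whiskered by $\eta_o$) serve for $\mS$ because $H'_\omega \circ r'_{\omega o}\circ \eta_o = H_\omega\circ r_{\omega o}$; uniqueness in Theorem~\ref{thm.D2.1} then finishes it. For $(ii)\Rightarrow(i)$ the key computation is the same in both texts --- reduce the equivalence $(v,h)\in(q,q')$ modulo $G_\bullet$ and use $u_{b,G_\bullet} = \tbfu_b = \bar{r}_{a\bo}\circ r_b\circ\bar{r}_{\bl a}$ --- but you then pass to $\pi_1(\Delta)$ and invoke the classification $\pbun(\Delta,F)\simeq H^1(\pi_1(\Delta),\Aut F)$ of (\ref{eq.D1.1}), whereas the paper stays at the level of $\Delta$ and writes the presheaf isomorphism explicitly as $\eta_o := \bar{r}'_{oa}\circ v_{o,G_\bullet}^{-1}\circ \bar{r}_{oa}^{-1}$, reading the intertwining relation $r'_b\circ\eta_\bl = \eta_\bo\circ r_b$ directly off the reduced cohomology identity. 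Your route is a legitimate variant (the classification result would manufacture the same $\eta$ one level down), at the cost of the extra bookkeeping you flag; the paper's explicit formula avoids that entirely. The compatibility issue you single out as ``the hard part'' is in fact harmless: after a choice of standard fibre $S_o=S'_o=\wa{G}_\bullet$, the inclusion $N_\bullet/G_\bullet\hookrightarrow\Aut\wa{G}_\bullet$ is the canonical map $v\mapsto\tbfu$ of (\ref{eq.dual2group.a}) sending a natural transformation $v\in U_\otimes(H_a,H_a\circ\tbfu)$ to the automorphism $\tbfu$ it covers, so no further normalization of the 2--group isomorphism is needed.
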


\begin{proof}
We prove $(i) \Rightarrow (ii)$.
Let $H'_o : S'_o \to \Hilb$, $o \in \Delta$, denote a family of embeddings for $\mS'$. 
Then $H_o :=$ $H'_o \circ \eta_o$, $o \in \Delta$, is a family of embeddings for $\mS$.
We consider a family 
$v' = \{ v'_{\omega o} \in U_\otimes( H'_o \, , \, H'_\omega \circ r'_{\omega o} ) \}_{o \leq \omega}$ 
as in (\ref{eq.D2.1a}). 
From the proof of the previous theorem we know that $v'$ determines the 2--group ${}^2G'_\bullet$
and the cocycle $q' \in \check{Z}^2(\Delta,{}^2G'_\bullet)$ defined by $\mS'$.
On the other hand we have
$H'_\omega \circ r'_{\omega o} \circ \eta_o = H_\omega \circ r_{\omega o}$,
and checking the definition (\ref{eq.D2.1}) we find 
$v'_{\omega o} \in U_\otimes( H_o \, , \, H_\omega  \circ r_{\omega o} )$
for all $o \leq \omega$. Thus we conclude that $\mS$ defines ${}^2G'_\bullet$ as 2--group and $q'$ as cocycle.

We prove $(ii) \Rightarrow (i)$.
Let $\beta : {}^2G_\bullet \to {}^2G'_\bullet$ be a 2--group isomorphism with ${}^2G_\bullet$ (${}^2G'_\bullet$) defined by $\mS$ ($\mS'$).
Then $S_o \simeq \wa{G}_\bullet$ and $S'_o \simeq \wa{G}'_\bullet$, 
thus performing a choice of the standard fibre we assume without loss of generality that 
$S'_o = S_o = \wa{G}_\bullet$ for all $o \in \Delta$.
Let now $q=(u,g)$ and $q'=(u',g') \in \check{Z}^2(\Delta,{}^2G_\bullet)$ denote the cocycles defined by $\mS$ and $\mS'$ respectively.
Then by construction we have 
$u_b \in U_\otimes( H_a , H_a \circ \tbfu_b )$
and
$u'_b \in U_\otimes( H'_a , H'_a \circ \tbf'_b )$,
where $\tbfu_b$ and $\tbf'_b$ are defined as in (\ref{def.fb}).
Thus passing to classes modulo $G_\bullet$ we get
\begin{equation}
\label{eq.uGb}
u_{b,G_\bullet} = \tbfu_b = \bar{r}_{a \bo} \circ r_b \circ \bar{r}_{\bl a}
\ \ \ , \ \ \
u'_{b,G_\bullet} = \tbf'_b = \bar{r}'_{a \bo} \circ r'_b \circ \bar{r}'_{\bl a}
\ .
\end{equation}
Now, by hypothesis there is a pair $(v,h)$ such that (\ref{eq.B1.01a}) holds.
Passing to classes modulo $G_\bullet$ and keeping in mind (\ref{eq.uGb}) we get
\begin{equation}
\label{eq.uGb'}
\bar{r}_{a \bo} \circ r_b \circ \bar{r}_{\bl a} 
\ = \ 
v_{\bo,G_\bullet} \circ \bar{r}'_{a \bo} \circ r'_b \circ \bar{r}'_{\bl a} \circ v_{\bl,G_\bullet}^{-1}
\ .
\end{equation}
For any $o \in \Delta$ we set $\eta_o := \bar{r}'_{oa} \circ v_{o,G_\bullet}^{-1} \circ \bar{r}_{oa}^{-1}$;
by construction, $\eta_o \in \Aut \wa{G}_\bullet$ can be regarded as an isomorphism $\eta_o : S_o \to S'_o$,
and (\ref{eq.uGb'}) says that the relations
$r'_b \circ \eta_\bl = \eta_\bo \circ r_b$, $b \in \Sigma_1(\Delta)$,
hold, that is, $\eta$ is a presheaf isomorphism. This concludes our proof.
\end{proof}

\paragraph{The DR-presheaf of a non-abelian cocycle.}
Let $G$ be a compact group. Then we have an obvious embeding 
$H : \wa{G} \to \Hilb$,
defined by assigning to any unitary representation $\pi : G \to U(H_\pi)$ the underlying Hilbert space $H_\pi$.
We define
\[
N \, := \, \{ v \in U_\otimes( H , H \circ \tbf ) \, , \, \tbf \in \Aut \wa{G} \} \ .
\]
We endow $N$ with the product (\ref{eq.N0}).
In this way $N$ becomes a group having $U_\otimes(H) \simeq G$ as a normal subgroup.
This yields the 2--group ${}^2G = (G \tto N)$.
Reasoning as in (\ref{eq.dual2group.a}), we conclude that $N/G \simeq \Aut \wa{G}$.

Let now $q=(u,g) \in \check{Z}^2(\Delta,{}^2G)$. For any $o \leq \omega$, we define the functors
\[
r_{o \omega} : \wa{G} \to \wa{G}
\ \ \ , \ \ \ 
r_{o \omega} \, := \, u_{b,G} \ \ \ \ , \ b := ( \omega , o ; \omega) \in \Sigma_1(\Delta) \ .
\]
By construction $r_\co \circ r_\cz = \unl{g_c}_G \circ r_\cl =$ $r_\cl$ for all $c \in \Sigma_2(\Delta)$,
implying the presheaf relations
$r_{o\omega} \circ r_{\omega\xi} =$ $r_{o\xi}$, $o \leq \omega \leq \xi$.
Thus the pair $\mS^q := (S,r)^\Delta$, where $S_o \equiv \wa{G}$ for all $o \in \Delta$,
is a DR-presheaf bundle, that we call the \emph{dual of $q$}.

Comparing the construction of $\mS^q$ with the first step of the proof of Theorem \ref{thm.D2.1},
we conclude that $q$ is the cocycle defined by $\mS^q$ (up to a reduction of $N$ to $N_\bullet \subseteq N$).
By Theorem \ref{cor.ThmEmb.1}, $q' \in \check{Z}^2(\Delta,{}^2G)$ is cohomologous to $q$ 
if, and only if, there is a presheaf isomorphism
$\eta : \mS^q \to \mS^{q'}$.

Let now $u \in Z^1(\Delta,N)$. Then the group $u$-bundle $\mG_u$ is defined, Ex.\ref{ex.Gb}.
We set
\begin{equation}
\label{eq.waG}
\wa{\mG}_u \, := \, \mS^{du} \ ,
\end{equation}
where $du = (u,1) \in \check{Z}^2(\Delta,{}^2G)$ is defined as in \S \ref{sec.B1}.
We call $\wa{\mG}_u$ the \emph{dual of $\mG_u$}. 
Let $u' \in Z^1(\Delta,N)$ such that $du$ is cohomologous to $du'$ in $\check{Z}^2(\Delta,{}^2G)$
by means of a pair $(v,h)$. 
By the previous considerations, this is equivalent to saying that there is an isomorphism 
$\wa{\mG}_u  \simeq  \wa{\mG}_{u'}$ of DR-presheaves.
The maps
$\eta_o(y) := \wa{v}_o(y)$ and $\beta_b(y) := h_b v h_b^{-1}$,
at varying of $y \in G$, $o \in \Delta$, $b \in \Sigma_1(\Delta)$, define a gerbe isomorphism 
$\eta_\beta : \mG_u \to \mG_{u'}$.

\paragraph{Presheaves of DR-algebras and twisted C*-dynamical systems.}
Let $\mS = (S,r)^\Delta$ be a DR-presheaf bundle and $\varrho$ a section of $\mS$. 
Then for any $o \in \Delta$ a $C^*$-algebra $O_{\varrho,o}$ is defined,
generated by the spaces of arrows $(\varrho_o^l,\varrho_o^m)$, $l,m \in \bN$, where $\varrho_o^l$ is the $l$-fold tensor power
\cite[\S 4]{DR89}.
Since the construction of $O_{\varrho,o}$ is functorial, the functors $r_{eo}$, $e \leq o$, define *-isomorphisms 
\begin{equation}
\label{eq.DR1}
r^\varrho_{eo,*} : O_{\varrho,o} \to O_{\varrho,e} \ ,
\end{equation}
with 
$r^\varrho_{eo,*} \circ r^\varrho_{o\omega,*} = r^\varrho_{e\omega,*}$ for all $e \leq o \leq \omega$.
Of course, the inverses $r^\varrho_{\omega o , *} := (r^\varrho_{eo,*})^{-1}$ fulfil the precosheaf relations,
thus we have the $C^*$-bundle $\mO_\varrho = (O_\varrho,r^\varrho_*)_\Delta$.
Always by functoriality, given embeddings $H_o : S_o \to \Hilb$, $o \in \Delta$, there are *-monomorphisms
\begin{equation}
\label{eq.DR2}
H^\varrho_{o,*} : \mO_{\varrho,o} \to \mO_d \ ,
\end{equation}
where $d \in \bN$ is the dimension of $\varrho_o$ and $\mO_d$ is the Cuntz algebra. 
Now, we fix $a \in \Delta$ and set $G_\bullet := U_\otimes(H_a)$. 
Then, the morphism (\ref{eq.N0d}), composed with the action
$\bU(d) \to \Aut \mO_d$ (\ref{eq.Od.dyn}),
defines the morphism
\begin{equation}
\label{eq.DR3}
\alpha^\varrho_o : {}^2G_\bullet \to {}^1\Aut \mO_d \ .
\end{equation}
The fixed-point $C^*$-algebra of $\mO_d$ under the $G_\bullet$-action is
$H^\varrho_{o,*}(\mO_{\varrho,o}) \simeq \mO_{\varrho,o}$, see \cite[\S 4]{DR89}.
This implies that $\mO_\varrho$ is a bundle of fixed-point algebras,
thus the results in \S \ref{sec.C4} yield:
\begin{prop}
\label{prop.DR.tw}
Any section $\varrho$ of a DR-presheaf bundle defines:
$(i)$   A bundle $(\mO_\varrho,r^\varrho_*)_\Delta$ of fixed-point algebras of $\mO_d$;
$(ii)$  A gerbe $\check{\mF}_\varrho = ( F_\varrho , \jmath )_\Delta$, with fibres isomorphic to $\mO_d$;
$(iii)$ A twisted $C^*$-dynamical system $\mO_* = ( \mO_d \, , \, \alpha^\varrho_a \, , \, \bar{\jmath}^a \, , \, \gamma_* )$.
\end{prop}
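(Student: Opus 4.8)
The plan is to assemble Proposition~\ref{prop.DR.tw} from three pieces already developed in the paper: the functorial constructions of Doplicher--Roberts (giving the $C^*$-algebra $\mO_{\varrho,o}$ and the monomorphisms into $\mO_d$), the reconstruction machinery of \S\ref{sec.C1} (Proposition~\ref{prop.C1.1}), and the twisted-holonomy functor of \S\ref{sec.C4} (Theorem~\ref{thm.C4.1}). Each of the three items is essentially a bookkeeping assembly of a result already in hand, so the proof will be short.

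For $(i)$: by the functoriality of the DR-construction $S_o \mapsto O_{\varrho,o}$ (see \cite[\S 4]{DR89}), the isomorphisms $r_b : S_\bl \to S_\bo$ of the presheaf bundle induce $*$-isomorphisms $r^\varrho_{b,*} : O_{\varrho,\bl} \to O_{\varrho,\bo}$ satisfying the precosheaf relations, as already recorded in (\ref{eq.DR1}). Hence $\mO_\varrho = (O_\varrho, r^\varrho_*)_\Delta$ is a $C^*$-bundle. The embeddings $H^\varrho_{o,*} : O_{\varrho,o} \to \mO_d$ of (\ref{eq.DR2}) realize each $O_{\varrho,o}$ as the fixed-point algebra $\mO_d^{G_\bullet}$ under the $G_\bullet$-action $\alpha^\varrho_o$ of (\ref{eq.DR3}); this is exactly the statement that $\mO_\varrho$ is a bundle of fixed-point algebras in the sense of \S\ref{sec.C1}. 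For $(ii)$: I would check that the necessary condition (\ref{eq.C1.1a}) holds, i.e.\ that the holonomies $r^\varrho_p$ lie in $QG = N_\sigma G/G$ (equivalently in $\Aut\mO_d$ modulo the inner part coming from $\alpha(G_\bullet)$); this follows because the $r^\varrho_p$ are genuine $*$-automorphisms of $O_{\varrho,a}$, which the functoriality of the DR-construction lifts to automorphisms of $\mO_d$ normalizing $\alpha^\varrho_a(G_\bullet)$. Then Proposition~\ref{prop.C1.1} applies verbatim and furnishes a twisted connection $\tau = (\upsilon,\delta) \in \check{U}^2(\Delta,{}^2G_\bullet)$ together with its associated $C^*$-gerbe $\check{\mF}_\varrho$, unique up to isomorphism, whose fixed-point bundle is (isomorphic to) $\mO_\varrho$.

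For $(iii)$: $\check{\mF}_\varrho$ is a bundle $C^*$-gerbe with fibres isomorphic to $\mO_d$ and structure group $G_\bullet$ (the fibres are $G_\bullet$-dynamical systems via $\alpha^\varrho_a$); since $\pi_1(\Delta)$ is assumed countable and $G_\bullet = U_\otimes(H_a)$ is compact, hence locally compact, Theorem~\ref{thm.C4.1} applies and assigns to $\check{\mF}_\varrho$ a twisted $C^*$-dynamical system over $G_\bullet, \Pi := \pi_1(\Delta)$. Tracing through the proof of Theorem~\ref{thm.C4.1}, this system is precisely $\mO_* = ( \mO_d, \alpha^\varrho_a, \bar{\jmath}^a, \gamma_* )$ where $\bar{\jmath}^a_l := \jmath_{s(l)}$ is the holonomy along a chosen representative $s(l)$ of $l$ and $\gamma_*$ is obtained from the obstruction cochain $\gamma^a$ of (\ref{eq.B3.5}) by applying the section $\varsigma_{\check{\mF}_\varrho,a}$; the verification that $(\bar{\jmath}^a,\gamma_*)$ satisfies the defining relations (\ref{eq.tds}) is exactly (\ref{eq.B3.5}) again. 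This gives the three claimed outputs.

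The only point requiring genuine care --- and thus the main obstacle --- is $(ii)$, specifically verifying hypothesis (\ref{eq.C1.1a}): one must be sure that the $C^*$-bundle $\mO_\varrho$, built from the presheaf holonomy $r^\varrho_p$ acting on $O_{\varrho,a} \simeq \mO_{G_\bullet}$, genuinely has holonomies lying in the subgroup $N_\sigma G_\bullet / G_\bullet \subseteq \Aut\mO_{G_\bullet}$ rather than in some larger automorphism group. This is where the compatibility of the DR-functor with the canonical endomorphism $\sigma$ enters: the automorphisms induced by $r_p$ commute with $\sigma_{G_\bullet}$ because the tensor structure is preserved by $r_p$, so they extend to elements of $N_\sigma G_\bullet$ on $\mO_d$, exactly as in \cite[\S 4]{Vas12}. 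Once this is in place everything else is a direct citation of Proposition~\ref{prop.C1.1} and Theorem~\ref{thm.C4.1}, and the proof can simply state that $\mO_\varrho$ satisfies (\ref{eq.C1.1a}), invoke Proposition~\ref{prop.C1.1} for $(ii)$, and invoke Theorem~\ref{thm.C4.1} for $(iii)$.
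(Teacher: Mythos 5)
Your proposal is correct and follows essentially the same route as the paper, which obtains $(i)$ from the functoriality of the Doplicher--Roberts construction via (\ref{eq.DR1})--(\ref{eq.DR3}), and then deduces $(ii)$ and $(iii)$ by observing that $\mO_\varrho$ is a bundle of fixed-point algebras and invoking Proposition~\ref{prop.C1.1} and Theorem~\ref{thm.C4.1}. Your explicit verification of hypothesis (\ref{eq.C1.1a}) via the compatibility of the lifted holonomies with the canonical endomorphism is the one step the paper leaves implicit (deferring to \cite[\S 4]{Vas12}), and it is handled correctly.
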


\subsection{The gerbe of a Haag-Kastler net}
\label{sec.D4}

Let $M$ denote a globally hyperbolic spacetime and $\Delta$ its base of diamonds.
A \emph{Haag-Kastler net} over $M$ is given by a Hilbert space $H$ and a family of Von Neumann algebras 
$A_o \subset B(H)$, $o \in \Delta$,
fulfilling certain physically motivated properties \cite{GLRV01,BR08}.
Among these ones, we just need to mention the \emph{isotony property} 
$A_o \subseteq A_\omega$, $o \subseteq \omega$;
this defines inclusion *-morphisms $\jmath_{\omega o} : A_o \to A_\omega$, making $\mA := (A,\jmath)_\Delta$ a $C^*$-precosheaf.
The basic idea is that any $A_o$ is generated by gauge-invariant field operators localized in $o \in \Delta$.
An example is given by the fixed-point $C^*$-precosheaf under the $\bU(1)$-gauge action of the Dirac precosheaf \S \ref{sec.C1}.

An important invariant of $\mA$ is the category $Z^1(\mA)$, defined as follows \cite{BR08}.
The objects are given by \emph{cocycles} $z = \{ z_b \in U(A_{|b|}) \}$ fulfilling the relations
\begin{equation}
\label{eq.zb}
z_\co z_\cz = z_\cl \ \ \ , \ \ \ \forall c \in \Sigma_2(\Delta) \ ;
\end{equation}
the spaces of arrows $(z,z')$ are given by families $t = \{ t_o \in A_o \}$ such that
\[
t_\bo z_b \, = \, z'_b t_\bl \ \ \ , \ \ \ \forall b \in \Sigma_1(\Delta) \ .
\]
Any cocycle $z$ defines a Hilbert bundle $\mH^z = (H^z,U^z)_\Delta$, and a precosheaf morphism
$\mA \to \mB(\mH^z)$
playing the role of a sector. It turns out that $Z^1(\mA)$ is a DR-category.

From the above definitions it is evident that objects and arrows of $Z^1(\mA)$ have a nature of "vector fields" over $\Delta$.
This property is confirmed by the result that $Z^1(\mA)$ is the category of sections of a DR-presheaf $\mS(\mA)$ \cite{VasQFT}.
Thus applying Theorem \ref{thm.D2.1} we find:
\begin{thm}
\label{thm.HK}
Let $\mA = (A,\jmath)_\Delta$ be a Haag-Kastler net.
Then there are, unique up to isomorphism and equivalence respectively:
$(i)$   a compact group $G$ with a 2--group structure ${}^2G := (G \tto N)$;
$(ii)$  a cocycle $q = (u,g) \in \check{Z}^2(\Delta,{}^2G)$.
Any $z \in Z^1(\mA)$ defines a $q$-Hilbert gerbe $\check{\psi}^z$, and this assignment defines an embedding
$\check{\psi} : Z^1(\mA) \to \Hilb_q(\Delta)$.
\end{thm}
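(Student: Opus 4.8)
The plan is to invoke the machinery already assembled rather than build anything new. By the cited result \cite{VasQFT} (stated in the text), the superselection category $Z^1(\mA)$ of the Haag-Kastler net $\mA$ is the category of sections $\wt{\mS(\mA)}$ of a DR-presheaf $\mS(\mA) = (S,r)^\Delta$ over the base of diamonds $\Delta$; this is the only input specific to algebraic quantum field theory that the proof requires. Once this is in hand, the statement is almost immediate from Theorem \ref{thm.D2.1} applied to the DR-presheaf $\mS(\mA)$: the base of diamonds of a globally hyperbolic spacetime is a good base, hence $\pi_1(\Delta)$ is the topological fundamental group, and in particular $\Delta$ is connected, so the hypotheses of that theorem are met.

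Concretely, I would proceed as follows. First, recall from \S\ref{sec.D4} that the objects of $Z^1(\mA)$ are cocycles $z = \{ z_b \in U(A_{|b|}) \}$ with $z_\co z_\cz = z_\cl$, arrows are intertwining families of local operators, and that $Z^1(\mA) \simeq \wt{\mS(\mA)}$ as DR-categories. Second, fix a reference diamond $a \in \Delta$ and apply Theorem \ref{thm.D2.1} to $\mS(\mA)$: it produces a compact group $G := G_\bullet = U_\otimes(H_a)$ (for a choice of embedding functors $H_o : S_o \to \Hilb$), the associated 2--group ${}^2G = (G \tto N)$ with $N = \Nat_\bullet$ as in \eqref{eq.dual2group}, a cocycle $q = (u,g) \in \check{Z}^2(\Delta,{}^2G)$ constructed via \eqref{def.ub}, and an embedding $\wt{\mS(\mA)} \to \Hilb_q(\Delta)$; moreover ${}^2G$ is unique up to isomorphism and $q$ unique up to equivalence, exactly as asserted in $(i)$ and $(ii)$. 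Third, compose with the DR-category isomorphism $Z^1(\mA) \simeq \wt{\mS(\mA)}$ of \cite{VasQFT}, to obtain the embedding $\check\psi : Z^1(\mA) \to \Hilb_q(\Delta)$; tracing through the construction of \eqref{eq.emb}, the image of a cocycle $z$ (identified with a section $\varrho^z$ of $\mS(\mA)$) is precisely the $q$-Hilbert gerbe $\check{\mH}^{\varrho^z}$, which I would rename $\check\psi^z$.

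The only genuine subtlety is bookkeeping: one must check that the identification $Z^1(\mA) \simeq \wt{\mS(\mA)}$ is compatible with the additional DR-structure (tensor product, conjugates, $*$-operation), so that composing it with the $*$-tensor embedding of Theorem \ref{thm.D2.1} again yields a $*$-tensor embedding; but this compatibility is part of what \cite{VasQFT} establishes. A secondary point is the passage from presheaves to presheaf bundles: Remark \ref{rem.pb} gives a canonical isomorphism ${}_\beta\wt{\mS} \simeq \wt{\mS}$, so one may assume $\mS(\mA)$ is a presheaf bundle when invoking Theorem \ref{thm.D2.1}, exactly as in that theorem's proof. I expect no real obstacle here; the content of the theorem is entirely carried by Theorem \ref{thm.D2.1} together with the cited structural result on $Z^1(\mA)$, and the present statement is its transcription into the language of Haag-Kastler nets. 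Accordingly the proof is short.

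\begin{proof}
By \cite{VasQFT} the DR-category $Z^1(\mA)$ is isomorphic, as a symmetric tensor $C^*$-category with conjugates, to the category of sections $\wt{\mS(\mA)}$ of a DR-presheaf $\mS(\mA) = (S,r)^\Delta$ over the base of diamonds $\Delta$. Since $\Delta$ is a good base for $M$, it is connected and $\pi_1(\Delta) \simeq \pi_1(M)$; in particular the hypotheses of Theorem \ref{thm.D2.1} are satisfied. By Remark \ref{rem.pb} we may assume $\mS(\mA)$ is a presheaf bundle. Applying Theorem \ref{thm.D2.1} to $\mS(\mA)$ we obtain a 2--group ${}^2G = (G \tto N)$, unique up to isomorphism, with $G$ the compact group $U_\otimes(H_a)$ for a fixed $a \in \Delta$ and a choice of embedding functors $H_o : S_o \to \Hilb$, a cocycle $q = (u,g) \in \check{Z}^2(\Delta,{}^2G)$, unique up to equivalence, and an embedding $\Phi : \wt{\mS(\mA)} \to \Hilb_q(\Delta)$ as in \eqref{eq.emb}. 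This proves $(i)$ and $(ii)$.

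Composing $\Phi$ with the isomorphism $Z^1(\mA) \simeq \wt{\mS(\mA)}$ yields an embedding
\[
\check\psi : Z^1(\mA) \to \Hilb_q(\Delta) \ ,
\]
which is a $*$-tensor functor injective on objects and defining isomorphisms on spaces of arrows, since the isomorphism $Z^1(\mA) \simeq \wt{\mS(\mA)}$ preserves the DR-structure and $\Phi$ is a $*$-tensor embedding by Theorem \ref{thm.D2.1}. Given $z \in Z^1(\mA)$, let $\varrho^z$ be the corresponding section of $\mS(\mA)$; then, by the construction in the second and third steps of the proof of Theorem \ref{thm.D2.1}, $\check\psi^z := \check\psi(z)$ is the $q$-Hilbert gerbe $\check{\mH}^{\varrho^z} = (H^{\varrho^z}, U^{\varrho^z})_\Delta$ associated to $q$ in the sense of Example \ref{ex.Hilb}. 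This completes the proof.
\end{proof}
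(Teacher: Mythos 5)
Your proposal is correct and follows exactly the paper's route: the paper derives Theorem~\ref{thm.HK} immediately from the cited fact that $Z^1(\mA)$ is the category of sections of the DR-presheaf $\mS(\mA)$ together with an application of Theorem~\ref{thm.D2.1}, which is precisely your argument. Your version merely makes explicit the bookkeeping (Remark~\ref{rem.pb}, compatibility of the identification with the DR-structure) that the paper leaves implicit.
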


\begin{rem}
Higher category structures already appeared in quantum field theory on curved spacetimes,
see \cite{BS17} and related references. 
The scenario is the one of locally covariant quantum field theory,
and it could be of interest to understand whether there is some relation between 
the gerbe ${}^2\check{\mG}_q$ defined by a Haag-Kastler net and the "local gauge groupoids"
mentioned in the above cited paper.
\end{rem}

As we mentioned in \S \ref{sec.intro} we expect that $q$ is trivial,
and this would mean that the dual object of $Z^1(\mA)$ should be the group $G$, instead of a group gerbe or a group bundle. 
A brief argument supporting this hypothesis is given in the following lines.
It certainly works in the case of the Minkowski spacetime,
nevertheless a work in progress, involving $\mS(\mA)$, suggests that it should apply to generic spacetimes.

The embedding $\check{\psi}$ is realized by enlarging the local algebras $A_o$ by means of isometries
$\psi^z_{o,k}$, $k = 1, \ldots , d$,
where $d$ is the dimension of $z$ as an object of $Z^1(\mA)$.
The so-obtained $C^*$-algebra is denoted by $F_o$ and is interpreted as the one generated by (not necessarily gauge-invariant)
fields localized in $o \in \Delta$. In this way, for any $o \in \Delta$ we have finite-dimensional vector spaces 
\[
\psi^z_o \ = \ {\mathrm{span}} \,  \{ \psi^z_{o,k} \, , \, k = 1, \ldots , d \, \} \, \subset F_o \ .
\]
By construction one has $z_b \Psi \in \psi^z_\bo$ for all $b \in N_1(\Delta)$ and $\Psi \in \psi^z_\bl$.
Thus multiplication by $z_b$ defines a map
$u^z_b : \psi^z_\bl \to \psi^z_\bo$
that turns out to be unitary with respect to the scalar product 
$\left\langle \Psi , \Psi' \right\rangle {\bf 1} :=$ $\Psi^*\Psi'$, $\Psi,\Psi' \in \psi^z_\bl$,
see \cite[\S 3.4]{Rob0}. In conclusion,
\begin{itemize}
\item By (\ref{eq.zb}), the maps $u^z$ fulfill the precosheaf relations $u^z_\co \circ u^z_\cz = u^z_\cl$, $c \in N_2(\Delta)$,
      instead of the more general (\ref{eq.B2.1}). Thus any $( \psi^z , u^z )_\Delta$ is a Hilbert bundle;
\item Any $F_o$ acts on a Hilbert space $H^\psi_o$ carrying a representation $\pi_o : G \to U(H^\psi_o)$. 
      We have $\pi_o(g)\Psi \in \psi^z_o$, $\Psi \in \psi^z_o$, thus we get unitaries
      $\pi_o^z(g) \in U(\psi^z_o)$.
      Since operators in $A_o$ must commute with $\pi_o(g)$, we have
      $\pi_{|b|}(g)z_b =$ $z_b \pi_{|b|}(g)$ for all $b \in \Sigma_1(\Delta)$.
      It can be shown that this implies
      $\pi_\bo^z(g) \circ u^z_b =$ $u^z_b \circ \pi_\bl^z(g)$,
      thus $( \psi^z , u^z )_\Delta$ is a $G$-precosheaf in the sense of \S \ref{sec.A3}.
\end{itemize}
The above remarks suggest that the dual object is $G$, acting on the Hilbert bundles $( \psi^z , u^z )_\Delta$.

\paragraph{Acknowledgments.}
The author would like to thank an anonymous referee for several useful remarks
on a previous version of the present work, and for the references \cite{BMZ13,MP16,BM17,BS17}.

%


{\small

}

\end{document}